\newcommand{\sg}{\sigma}
\newcommand{\tsg}{\tilde \sigma}
\renewcommand{\a}{\alpha}
\renewcommand{\b}{\beta}
\newcommand{\e}{\varepsilon}
\renewcommand{\l}{\lambda}
\renewcommand{\d}{\delta}
\renewcommand{\t}{\theta}
\newcommand{\p}{\phi}
\newcommand{\vp}{\varphi}
\newcommand{\Th}{\Theta}
\newcommand{\oc}{c^\ast}
\newcommand{\half}{\frac{1}{2}}
\newcommand{\sesq}{\frac{3}{2}}
\DeclareMathOperator{\sgn}{sgn}
\DeclareMathOperator{\ind}{Index}
\DeclareMathOperator{\tr}{Trace}
\DeclareMathOperator{\res}{Res}
\newcommand{\I}{\mathcal{I}}
\newcommand{\D}{\mathcal{D}}
\newcommand{\AD}{\mid \mathcal{D} \mid}
\renewcommand{\H}{\mathcal{H}}
\renewcommand{\S}{\mathcal{S}}
\newcommand{\A}{\mathcal{A}}
\newcommand{\B}{\mathcal{B}}
\newcommand{\Br}{\mathfrak{B}}
\newcommand{\NH}{\mathbb{N}}
\newcommand{\ZH}{\mathbb{Z}}
\newcommand{\CH}{\mathbb{C}}
\newcommand{\RH}{\mathbb{R}}
\renewcommand{\TH}{\mathbb{T}}
\newcommand{\G}{\mathfrak{G}}
\renewcommand{\Re}{\text{Re}}
\renewcommand{\Im}{\text{Im}}
\newcommand{\lcom}{\left[}
\newcommand{\rcom}{\right]}
\newcommand{\ot}{\otimes}
\newcommand{\set}[1]{\lbrace #1 \rbrace}
\newcommand{\wh}{\widehat}
\newcommand{\fl}{\text{for all }}
\newcommand{\ket}[1]{|#1\rangle}    
\newcommand{\kett}[1]{|#1\rangle\!\rangle} 
\newcommand{\ov}[1]{\overline{#1}}
\def\build#1_#2^#3{\mathrel{
\mathop{\kern 0pt#1}\limits_{#2}^{#3}}}
\newbox\ncintdbox \newbox\ncinttbox
\newcommand{\ncint}{\mathop{\mathchoice{\copy\ncintdbox}%
    {\copy\ncinttbox}{\copy\ncinttbox}%
    {\copy\ncinttbox}}\nolimits}
\newcommand{\up}{\uparrow}
\newcommand{\ra}{\rightarrow}
\newcommand{\dn}{\downarrow}
\newcommand{\lgans}{\textquotedblleft}
\newcommand{\rgans}{\textquotedblright}
\newcommand{\nn}{\nonumber}
\numberwithin{equation}{section}
\theoremstyle{plain}
\newtheorem{thm}{Theorem}[section]
\newtheorem{lem}[thm]{Lemma}
\newtheorem{prop}[thm]{Proposition}
\newtheorem{cor}[thm]{Corollary}
\newtheorem*{1_ax}{Dimension}
\newtheorem*{2_ax}{Regularity}
\newtheorem*{3_ax}{Dimension Spectrum}
\theoremstyle{definition}
\newtheorem{defn}{Definition}[section]
\theoremstyle{remark}
\newtheorem{rem}{Remark}[section]
\begin{document}

\title[Chern-Simons action]{A Chern-Simons action for noncommutative spaces in general with the example $SU_q(2)$}

\author{Oliver Pfante}

\address{Oliver Pfante \\
         Mathematisches Institut\\
         Westfälische Wilhelms-Universität Münster \\
         Einsteinstraße 62 \\
         48149 Münster\\
         Germany 
}
\email{oliver.pfante@uni-muenster.de}

\maketitle

\begin{abstract}
Witten constructed a topological quantum field theory with the Chern-Simons action as Lagrangian. We define a Chern-Simons action for $3$-dimensional spectral triples. We prove gauge invariance of the Chern-Simons action, and we prove that it concurs with the classical one in the case the spectral triple comes from a $3$-dimensional spin manifold. In contrast to the classical Chern-Simons action, or a noncommutative generalization of it introduced by A. H. Chamseddine, A. Connes, and M. Marcolli by use of cyclic cohomology, the formula of our definition contains a linear term which shifts the critical points of the action, i. e. the solutions of the corresponding variational problem.\\
Additionally, we investigate and compute the action for a particular example: the quantum group $SU_q(2)$. Two different spectral triples were constructed for $SU_q(2)$. We investigate the Chern-Simons action, defined in the present paper, in both cases, and conclude the non-topological nature of the action. Using the Chern-Simons action as Lagrangian we define and compute the path integral, at least conceptually.
\end{abstract}

\section{Introduction}

S.-S. Chern and J. Simons uncovered a geometrical invariant which grew out of an attempt to derive a purely combinatorial formula for the first Pontrjagin number of a $4$-manifold. This Chern-Simons invariant turned out to be a higher dimensional analogue of the $1$-dimensional geodesic curvature and seemed to be interesting in its own right \cite{chern_simons}. This classical Chern-Simons invariant has found numerous applications in differential geometry, global analysis, topology and theoretical physics. \\
E. Witten \cite{witten} used the Chern-Simons invariant to derive a $3$-dimensional quantum field theory in order to give an intrinsic definition of the Jones Polynomial and its generalizations dealing with the mysteries of knots in three dimensional space. Witten's approach is motivated by the Lagrangian formulation of quantum field theory, where observables are computed by means of integration of an action functional over all possible physical states modulo gauge transformations -- i. e. one computes the Feynman \textit{path integral}. \\
Knot polynomials deal with topological invariants, and understanding these theories as quantum field theories, as Witten did, involves the construction of theories in which all of the observables are topological invariants. The physical meaning of \textquotedblleft topological invariance\textquotedblright $\,$is \textquotedblleft general covariance\textquotedblright. A quantity that can be computed from a manifold $M$ as a topological space without any choice of a metric is called a \textquotedblleft topological invariant" by mathematicians. To a physicist, a quantum field theory defined on a manifold $M$ without any a priori choice of a metric on $M$ is said to be generally covariant. Obviously, any quantity computed in a generally covariant quantum field theory will be a topological invariant. Conversely, a quantum field theory in which all observables are topological invariants can naturally be seen as a generally covariant quantum field theory. The surprise, for physicists, comes in how general covariance in Witten's three dimensional covariant quantum field theory is achieved. General relativity gives us a prototype for how to construct a quantum field theory with no a priori choice of metric - we introduce a metric, and then integrate over all metrics. Witten constructs an exactly soluble generally covariant quantum field theory in which general covariance is achieved not by integrating over metrics, instead he starts with a gauge invariant Lagrangian that does not contain any metric -- the Chern-Simons invariant, which becomes the \textit{Chern-Simons action}. \\

Let $M$ denote a $3$-dimensional, closed, oriented manifold and $A \in M_N(\Omega^1(M))$ a hermitian $N \times N$-matrix of differential $1$-forms on $M$. The Chern-Simons action for $A$ is given by:
\begin{equation} \label{eq_1.1}
S_{CS}(A) := \dfrac{k}{4 \pi} \int_M \tr \left( A \wedge dA + \dfrac{2}{3} A \wedge A \wedge A \right) 
\end{equation}
for an integer $k \in \ZH$ which is called the \textit{level}. Clearly, $S_{CS}(A)$ does not depend on a metric, and therefore it is a topological invariant. In addition, the Chern-Simons action is gauge invariant. Under gauge transformation 
\begin{equation*}
A \mapsto A^u= uAu^\ast + u d u^\ast
\end{equation*} 
of the $1$ form by a gauge-map $u: M \ra SU(N)$ we have 
\begin{equation*}
S_{CS}(A^u) = S_{CS}(A) + 2 \pi m
\end{equation*}
for an integer $m \in \ZH$, called the winding number of the gauge-map $u$. Next Witten computes the partition function 
\begin{equation} \label{eq_1.2}
Z(k) = \int  D \lcom A \rcom e^{i S_{CS}(A)} 
\end{equation} 
of the action, where we integrate over all possible hermitian $N \times N$-matrices of differential $1$-forms $A$ on $M$ modulo gauge transformation. Note that the integrand is well defined because it does not depend on the special choice of the representative $A$ of the equivalence class $\lcom  A \rcom$ due to the gauge invariance of the action.\\
The partition function $Z(k)$ is a topological invariant of the manifold $M$ in the variable $k$ corresponding to the so called \textit{no knot}-case in \cite{witten}, which is interesting in its own right. There exists a meromorphic continuation of $Z(k)$ in $k$ which was computed in \cite{witten} for some explicit examples. For the $3$-sphere $M = S^3$ and the gauge group $SU(2)$ one gets 
\begin{equation*}
Z(k) = \sqrt{\dfrac{2}{2+k}} \sin \left( \dfrac{\pi}{k + 2} \right) \, .
\end{equation*}
For manifolds of the form $X \times S^1$, for a $2$-dimensional manifold $X$, and arbitrary gauge group $SU(N)$, one obtains $Z(k) = N_X$ as result, with an integer $N_X \in \NH$ depending only on the manifold $X$.\\

Our main concern in this paper is the definition of a Chern-Simons action on noncommutative spaces, so called spectral triples $\left( \A, \H, \D \right)$, consisting of a pre-$C^\ast$-algebra $\A$ of bounded operators on the Hilbert space $\H$ and an unbounded, self-adjoint operator $\D$ on $\H$ with compact resolvent, such that the commutators $\lcom a, \D \rcom$ for $a \in \A$ are bounded. Spectral triples are the starting point for the study of noncommutative manifolds \cite{connes}. One can think of them being a generalization of the notion of ordinary differential manifolds because every spin manifold without boundary can be encoded uniquely by a spectral triple \cite{connes_reconstruction}. In this framework some core structures of topology and geometry such as the index theorem \cite{connes_index} were extended far beyond the classical scope.\\
By means of the local index theorem proven by A. Connes and H. Moscovici in \cite{connes_index} we define a Chern-Simons action for noncommutative spaces in general. This definition looks similar to the classical one \eqref{eq_1.1}, but an additional linear term appears which comes from the $1$-cochain of the cyclic cocycle given by the local index formula. Next we can reconstruct the analogue of gauge transformations and prove gauge invariance of the Chern-Simons action. We derive this most important result of this work by means of the coupling between cyclic cohomology and $K$-theory \cite{connes}. Finally, we prove that the Chern-Simons action, defined by us, coincides with the classical one if one works with a \textit{commutative} spectral triple coming from a $3$-dimensional spin manifold.\\

Our definition of a noncommutative Chern-Simons action is not the first one. Several proposals were given before: for instance by T. Krajewski \cite{krajewski} and A. H. Chamseddine, J. Fröhlich \cite{chamsedine}. The Chern-Simons action in \cite{krajewski}, which is defined as the classical one \eqref{eq_1.1} using the Wodzicki residue instead of the integral, fails to be gauge invariant in general. For the quantum group $SU_q(2)$ and its spectral triple constructed by P. S. Chakraborty and A. Pal \cite{pal} the Chern-Simons action of \cite{krajewski} is not gauge invariant in general.\\
In \cite{chamsedine} a definition of a Chern-Simons action is given, using a Chern-Simons form constructed in cyclic cohomology by D. Quillen \cite{quillen}. But the definition in \cite{chamsedine} covers only the case, where the spectral triple essentially comes from a cylindrical manifold $\lcom 0,1 \rcom \times M$, with a $2$-dimensional manifold $M$, where noncommutativity is implemented by tensoring the algebra of smooth functions on the manifold with a finite dimensional matrix-algebra.\\
The very first consideration about a Chern-Simons action in noncommutative geometry can be found in \cite{string} by E. Witten himself. In the book \cite{connes_marcolli} of A. Connes and M. Marcolli these considerations were generalised (based on a joint work \cite{connes_cham} of the first author with A. H. Chamseddine, where a Chern-Simons term appears in the variation of the spectral action under inner fluctuations) in order to define a Chern-Simons action for arbitrary unital, involutive algebras $\A$ in the framework of cyclic cohomology. Let $\psi$ be a cyclic $3$-cocycle on $\A$. The functional
\begin{equation*}
CS_{\psi}(A) = \int_{\psi} A dA + \dfrac{2}{3} A^3 \, , \quad A \in \Omega^1(\A) \, ,
\end{equation*}
with $\psi(a^0,a^1,a^2,a^3) = \int_\psi a^0da^1da^2da^3$, transforms under gauge transformation $A \mapsto A^u = u d u^\ast + u A u^\ast$, for a unitary $u \in \A$, as
\begin{equation*}
CS_\psi(A^u) = CS_\psi(A) + \dfrac{1}{3} \langle \psi, u \rangle \, ,
\end{equation*}
where $\langle \psi, u \rangle$ is the pairing between $HC^3(\A)$ and $K_1(\A)$. This construction is similar to the one which is given in this paper, but there are also some differences. \\
Firstly, our construction is more explicit. We make use of the additional structure given by the spectral triple in order to define a unique Chern-Simons action for the particular triple. The definition by means of cyclic cohomology does not suggest uniqueness, and there is the question left which cocycle should be chosen. Secondly, our definition contains an additional linear term: the cocycle $\phi_1$ of the local index formula. Such an additional linear term in the definition of a noncommutative Chern-Simons action is new but not necessarily unreasonable. In \cite{wulkenhaar_victor}, V. Gayal and R. Wulkenhaar investigate the Yang-Mills action for a triple constructed on the noncommutative d-dimensional Moyal space and discovered also an additional linear part of the Yang-Mills action for this noncommutative space.\\

In the present paper we shall apply additionally the general results about a noncommutative Chern-Simons action to a special example of noncommutative spaces: the quantum group $SU_q(2)$ with deformation parameter $ 0 \leq q < 1$. One can think of $SU_q(2)$ as a noncommutative generalization of the ordinary $3$-sphere $S^3$, which is isomorphic to $SU(2)$. Spectral triples for $SU_q(2)$ were constructed by  P. S. Chakraborty and A. Pal \cite{pal} on the one hand, and L. D\c{a}browski, G. Landi, A. Sitarz, W. van Suijlekom and J. C. V$\acute{a}$rilly \cite{Landi_1} on the other hand. The index theoretical aspects of the spectral triple in \cite{pal} were carefully studied by A. Connes \cite{connes_sphere}. The same was done for the spectral triple in \cite{Landi_1} by the same authors in \cite{Landi_2}. \\
In this article we mainly work with the spectral triple $(C^\infty(SU_q(2)), \H, \D)$ of \cite{pal} using the results in \cite{connes_sphere}. We construct a symbol map $\sg_q: C^\infty(SU_q(2)) \ra C^{\infty}(S^1)$ and show that the Chern-Simons action is essentially a pull-back by $\sg_q$ of an action functional on the algebra $C^\infty(S^1)$. Using this, we can reduce complexity of the path integral even when gauge-breaking is involved. We compute the Chern-Simons action explicitly, and obtain an additional linear term which is characteristic for our approach. This additional linear term shifts the critical points of the action and has made explicit computations of the path integral impossible so far.\\
Another triple is available for $SU_q(2)$, given by the authors of \cite{Landi_1}, which is slightly different from the one in \cite{pal}. The modifications in \cite{Landi_1} are performed in such a way that the resulting triple is similar with the commutative one, given by the $3$-sphere $SU(2)$. There is only one crucial problem with this construction: the \lgans volume form\rgans $\,$, the cochain $\phi_3$ of the local index formula, whose special shape is determined in \cite{Landi_2}, vanishes identically. The proof bases upon the results in \cite{Landi_2} and is carried out in the present paper. Hence the noncommutative analogue of a Chern-Simons action, defined in the present paper, becomes rather trivial and uninteresting. Although Chern-Simons theory is not particularly interesting for this spectral triple, it delivers an extremely useful insight into the nature of our noncommutative approach. The different shape of the Chern-Simons action for the triples in \cite{pal} or \cite{Landi_1} proves the non-topological nature of our action. This is interesting in its own right because in the classical setting, working with a $3$-dimensional manifold, the topological nature of the Chern-Simons action \eqref{eq_1.1} is rather obvious. We prove that the noncommutative action coincides with the classical one for commutative triples coming from a $3$-dimensional spin manifold. Hence one might expect topological invariance of the action defined in the present paper too. Here by topological invariance we mean that the action should depend only on the underlying \lgans noncommutative topological space\rgans, which is in both cases the $C^\ast$-algebra $C(SU_q(2))$.\\

The paper is divided into two parts. In the first part a general definition of a Chern-Simons action on noncommutative spaces is worked out thoroughly. This part starts with the second section, where we introduce the main tools of noncommutative differential geometry in order to keep this paper self-contained. In the third section we define a noncommutative Chern-Simons action. Additionally, we prove gauge invariance of this noncommutative Chern-Simons action, and we prove that this action coincides with the classical one \eqref{eq_1.1} in the case the spectral triple comes from a $3$-dimensional spin manifold.\\
In the second part of the present paper we apply the general results to $SU_q(2)$ and its $3$-dimensional spectral triples. In the fourth section we give a rough overview of the triple in \cite{pal} and its properties proven there and in \cite{connes_sphere}. Distinguishing the case $q = 0$ from $0 < q < 1$ becomes important, especially for the local index theorem, because the shape of the cyclic cocycle $( \phi_3, \phi_1)$ is different. Eliminating this distinction is the main result of a technical lemma in the fifth section, where we construct a symbol map $\sg_q: C^\infty(SU_q(2)) \ra C^{\infty}(S^1)$ for $0 \leq q < 1$. In terms of this symbol map we can express the crucial property of the Chern-Simons action: a linearity property of the action which we take into account, when we define and compute, at least conceptually, a noncommutative analogue of the path integral \eqref{eq_1.2} in the sixth section. In the last section we analyse the differences that appear if one works with the spectral triple given in \cite{Landi_1} instead, and deduce the non-topological nature of the Chern-Simons action defined in the first part.\\

This paper is one of two papers about the content of my Ph. D. thesis. An additional paper \cite{pfante_torus} will appear, where we compute the Chern-Simons action explicitly for the noncommutative $3$-torus. Also there we make sense of the path integral for this noncommutative space, i. e. we give a definition of the measure $ D \lcom A \rcom$ in \eqref{eq_1.2}. In addition, for the noncommutative $3$-torus we were able to carry out computations of the path integral more explicitly than for $SU_q(2)$, i. e. instead of a conceptual result only, in \cite{pfante_torus} the first coefficient in the Taylor expansion series of the partition function $Z(k)$ in the variable $k^{-1}$ is computed. \\

It is a great pleasure for me to thank my advisor Raimar Wulkenhaar who took care of me and my work for years. In addition, I appreciate helpful discussions with Alain Connes, Giovanni Landi, Walter van Suijlekom and Christian Voigt.

\part{Chern-Simons action on noncommutative spaces}

This part establishes the main results of the paper concerning the definition of a Chern-Simons action on $3$-dimensional spectral triples. Gauge invariance of the action is proved for a noncommutative analogue of gauge mapping, and that the action coincides with the classical Chern-Simons action if one works with a spectral triple coming from a $3$-dimensional spin manifold.

\section{Noncommutative differential geometry}

In this section we give some preliminaries about noncommutative differential geometry \cite{connes}. Starting point is the definition of a spectral triple.

\begin{defn}
A spectral triple $(\A, \H, \D)$ consists of a unital pre-$C^\ast$-algebra $\A$, a separable Hilbert space $\H$ and a densely defined, self-adjoint operator $\D$ on $\H$ such that
\begin{itemize}
 \item there is a faithful representation $\pi: \A \ra \B(\H)$ of the pre-$C^\ast$-algebra $\A$ by bounded operators on $\H$
 \item the commutator $\lcom \D, \pi(a) \rcom $ is densely defined on $\H$ for all $a \in \A$ and extends to a bounded operator on $\H$
 \item the resolvent $(\D - \l)^{-1}$ extends for $\l \notin \RH$ to a compact operator on $\H$.
\end{itemize}
\end{defn}

\begin{rem}
A pre-$C^\ast$-algebra differs from an ordinary $C^\ast$-algebra being only closed with respect to holomorphic functional calculus. 
\end{rem}

The commutative world provides us with a large class of so called commutative spectral triples $(C^{\infty}(M), L^{2}(\S), \D)$ consisting of the Dirac operator acting on the Hilbert space $L^{2}(\S)$ of $L^{2}$-spinors over a closed spin manifold $M$. \\
Next we introduce the noncommutative replacement of $n$-forms. See \cite{voigt}.

\begin{defn}
For $n \geq 0$ let $\Omega^n(\A) = \A \ot \ov \A^{\otimes n}$ the vector space of noncommutative $n$-forms over $\A$, where $\ov \A = \A - \CH 1$ denotes the vector space we obtain from the algebra $\A$ if one takes away the unital element $1 \in \A$.  
\end{defn}

Here we used the notation
\begin{equation*}
\ov \A^{\otimes n} = \ov \A \otimes \cdots \otimes \ov \A
\end{equation*}
to denote the tensor product of $n$ copies of $\ov \A$. Elements of $\Omega^n(\A)$ are written in the more suggestive form $a^0da^1 \cdots da^n$ for $a^0, \ldots, a^n \in \A$, setting $a^0da^1 \cdots da^n = 0$ if $a^i = 1$ for $i \geq 1$. We also write $da^1 \cdots da^n$ if $a^0 = 1$.

\begin{rem}
The notion of noncommutative $n$-forms and their homology theory can be built up for arbitrary algebras, especially non-unital ones. In this case the definition differs slightly from the one we have just given. See \cite{voigt} for more details.
\end{rem}

One can define a $\A$-$\A$-bimodule structure on $\Omega^n(\A)$. Let us first consider the case $n =1$. We define a left $\A$-module structure on $\Omega^{1}(\A)$ by setting 
\begin{equation*}
a \left( a^{0}da^{1} \right)  = aa^{0}da^{1}.
\end{equation*}
A right $\A$-module structure on $\Omega^{1}(\A)$ is defined according to the Leibniz rule $d(ab) = dab+adb$ by 
\begin{equation*}
\left( a^{0}da^{1} \right) a=a^{0}d \left( a^{1}a \right) -a^{0}a^{1}da \, .
\end{equation*}
With these definitions $\Omega^{1}(\A)$ becomes an $\A$-$\A$-bimodule. It is also easy to verify that there is a natural isomorphism
\begin{equation*}
\Omega^{n}(\A) \cong \Omega^{1}(\A) \otimes_{\A} \Omega^{1}(\A) \otimes_{\A} \cdots \otimes_{\A} \Omega^{1}(\A) = \Omega^{1}(\A)^{\otimes_{\A}n}
\end{equation*} (as vector spaces) for every $n \geq 1$. As a consequence, the spaces $\Omega^{n}(\A)$ are equipped with an $\A$-$\A$-bimodule structure in a natural way. Explicitly, the left $\A$-module structure on $\Omega^{n}(\A)$ is given by
\begin{equation*}
a \left( a^{0} da^{1} \cdots da^{n} \right) = aa^{0}da^{1}\cdots da^{n} 
\end{equation*} 
and the right $\A$-module structure may be written as 
\begin{align*} 
 \left( a^{0} da^{1} \cdots d a^{n} \right) a = \, & a^{0} d a^{1} \cdots d \left(  a^{n}  a \right) + \\
						& \sum^{n-1}_{j=1} (-1)^{n-j} a^{0} da^{1} \cdots d(a^{j}a^{j+1}) \cdots da^{n}da \\
						& + (-1)^{n} a^{0}a^{1}da^{2}\cdots da^{n}da.
\end{align*}
Moreover, we view $\Omega^{0}(\A) = \A$ as an $\A$-bimodule in the obvious way using the multiplication in $\A$. We are also able to define a map $\Omega ^{n}(\A) \otimes \Omega^{m}(\A) \ra \Omega^{n+m}(\A)$ by considering the natural map
\begin{equation*}
\Omega^{1}(\A)^{\otimes_{\A}n} \otimes \Omega^{1}(\A)^{\otimes_{\A}m} \rightarrow \Omega^{1}(\A)^{\otimes_{\A}n} \otimes_{\A} \Omega^{1}(\A)^{\otimes_{\A}m} = \Omega^{1}(\A)^{\otimes_{\A}n+m}.
\end{equation*}
Let us denote by $\Omega(\A)$ the direct sum of the spaces $\Omega^{n}(\A)$ for $n \geq 0$. Then the maps $\Omega^{n}(\A) \otimes \Omega^{m}(\A) \rightarrow \Omega ^{n+m}(\A) $ assemble to the map $\Omega(\A) \otimes \Omega(\A) \rightarrow \Omega(\A)$. In this way the space $\Omega(\A)$ becomes an algebra. Actually $\Omega(\A)$ is a graded algebra if one considers the natural grading given by the degree of a differential form. Let us now define a linear operator $d: \Omega^{n}(\A) \ra \Omega^{n+1}(\A)$ by 
\begin{equation*}
 d \left( a^{0}da^{1}\cdots da^{n} \right) = d a^{0} \cdots d a^{n}, \quad d \left( d a^{1} \cdots d a^{n} \right) = 0
\end{equation*}
for $a^0, \ldots, a^n \in \A$. It follows immediately from the definition that $d^{2}= 0$. A differential form in $\Omega(\A)$ is called homogeneous of degree $n$ if it is contained in the subspace $\Omega^{n}(\A)$. Then the graded Leibniz rule
\begin{equation*}
d(\omega \eta) = d\omega \eta + (-1)^{\mid \omega \mid} \omega d \eta
\end{equation*} 
for homogeneous forms $\omega$ and $\eta$ holds true. \\
We define the linear operator $b: \Omega^{n}(\A) \ra \Omega^{n-1}(\A)$ by 
\begin{align*}
 b \left( a^{0}da^{1}\cdots da^{n} \right) =& (-1)^{n-1} \left( a^{0} da^{1} \cdots d a^{n-1} a^{n} - a^{n}a^{0} da^{1} \cdots d a^{n-1} \right) \\
					   =& (-1)^{n-1} \left[ a^{0}da^{1} \cdots da^{n-1}, a^{n} \right] \\
					   =& a^{0}a^{1}da^2 \cdots da^{n} + \\
						&\sum^{n-1}_{j=1} (-1)^{j} a^{0}da^{1} \cdots d \left( a^{j}a^{j+1} \right) \cdots da^{n} \\
						&+  (-1)^{n} a^{n}a^{0}da^{1} \cdots d a^{n-1}.
\end{align*}
A short calculation shows $b^{2} = 0$. Next we introduce the $B$-operator $B: \Omega^{n}(\A) \ra \Omega ^{n+1} (\A)$ given by
\begin{equation} \label{B_operator}
B \left( a^{0}da^{1} \cdots d a^{n} \right) = \sum ^{n}_{i = 0} (-1)^{ni} d a^{n+1-i} \cdots da^{n}da^{0} \cdots da^{n-i}.
\end{equation}
One can check that $ b^2 = B^{2} = 0$ and $Bb + bB = 0$. According to these equations we can form the $(B,b)$-bicomplex of $\A$.
\begin{displaymath}
\begin{xy}
 \xymatrix{ \vdots \ar[d] & \vdots \ar[d] & \vdots \ar[d] & \vdots \ar[d] \\
	\Omega^{3}(\A) \ar[d]_{b} & \Omega^{2}(\A) \ar[l]_{B} \ar[d]_{b} & \Omega^{1}(\A) \ar[l]_{B} \ar[d]_{b} & \Omega^{0}(\A) \ar[l]_{B} \\
	\Omega^{2}(\A) \ar[d]_{b} & \Omega^{1}(\A) \ar[d]_{b} \ar[l]_{B} & \Omega^{0}(\A) \ar[l]_{B}  \\
	\Omega^{1}(\A) \ar[d]_{b} & \Omega^{0}(\A) \ar[l]_{B}  \\
	\Omega^{0}(\A)  \\
}
\end{xy}
\end{displaymath}
The \textit{cyclic homology} of $\A$ is the homology of the total complex of the $(B,b)$-bicomplex of $\A$. \\
The corresponding cohomological version, cyclic cohomology, is quite easy to define. If $V$ is a vector space we denote by $V' = Hom(V, \mathbb{C})$ its dual space. If $f: V \rightarrow W $ is a linear map then it induces a linear map $W' \rightarrow V'$ which will be denoted by $f'$. Applying the dual space functor to the $(B,b)$-complex we have the $(B',b')$-complex of an algebra $\A$ which is again a bicomplex, i. e. $b'^2 =0$, $B'^2=0$, and $B'b'+b'B'=0$. The \textit{cyclic cohomology} of $\A$ is the cohomology of the total complex of the dual $(B',b')$-bicomplex of $\A$. In the sequel we simply write $B$ instead of $B'$ and $b$ instead of $b'$ because it is always clear which kind of map, the ordinary or the dual one, is considered.\\

Next we discuss the noncommutative replacement of pseudo differential calculus and the Atiyah-Singer's index theorem. The main source for this part is \cite{connes_index}. In order to make pseudo differential calculus work for spectral triples we must impose three additional constraints. 

\begin{1_ax}
There is an integer $n$ such that the decreasing sequence $(\l_{k})_{k \in \mathbb{N}}$ of eigenvalues of the compact operator $\mid \D \mid ^{-1} $ satisfies
\begin{equation*}
\lambda_{k} = O \left( k^{-1/n} \right)
\end{equation*} when $k \ra \infty $.
\end{1_ax}

The smallest integer which fulfils this condition is called the \textit{dimension} of the spectral triple. In the case of a $p$-dimensional, closed spin manifold, the dimension of the triple $(C^{\infty}(M), L^{2}(\S), \D)$ coincides with the dimension $p$ of the manifold. Since $\D$ has compact resolvent, its kernel $\ker \D$ is finite dimensional and $\AD ^{-1}$, with $\AD = \sqrt{\D^{2}}$, is well defined as $\AD ^{-1}$ on the orthogonal complement of $\ker \D$ and $0$ on $\ker \D$. Since the kernel finite dimensional, it does not influence the asymptotic behaviour of the Dirac operator or its inverse respectively. \\
Now we consider the derivation $\d$ defined by $\d (T) = \lcom \AD, T \rcom $ for an operator $T$ on $\H$.

\begin{2_ax}
Any element $b$ of the algebra generated by $\pi(\A)$ and $\lcom \D, \pi(\A) \rcom $ is contained in the domain of $\d^k$ for all $k \in \NH$, i. e. $\delta^{k}(b)$ is densely defined and has a bounded extension on $\H$.
\end{2_ax}

In the case of a spin manifold $M$ this condition tells us that we should work with $C^{\infty}(M)$-functions only. Combining the dimension and regularity conditions we obtain that the functions 
\begin{equation*}
\zeta_{b}(z)= \tr \left( b \AD ^{-2z} \right)
\end{equation*}
are well defined and analytic for $\text{Re} \, z >  p/2$ and all $b $ in the algebra $\B$ generated by the elements
\begin{equation*}
\delta^k(\pi(a)), \quad \delta^k(\lcom \D, \pi(a) \rcom ) \quad \text{for } a \in \A \, \text{ and } \, k \geq 0.
\end{equation*}
In order to apply the local index theorem \cite{connes_index} of A. Connes and H. Moscovici we need even more. More precisely, we have to introduce the notion of the \textit{dimension spectrum} of a spectral triple. This is the set $\Sigma \subset \CH$ of singularities of functions
\begin{equation*}
 \zeta_{b}(z)= \tr \left( b \AD ^{-2z} \right) \quad \text{Re} \, z >  p/2 \quad b \in \B \, .
\end{equation*}
We assume the following:

\begin{3_ax}
$\Sigma$ is a discrete subset of $\CH$. Therefore, for any element $b$ of the algebra $\B$ the functions 
\begin{equation*}
\zeta_{b}(z) = \tr(b \mid \mathcal{D} \mid ^{-2z}) 
\end{equation*}
extend holomorphically to $\CH \setminus \Sigma$.
\end{3_ax}

By means of the third condition the functions $ \zeta_{b} $ are meromorphic for any $b\in \B$. In the commutative case of a $p$-dimensional manifold $M$ the dimension spectrum $\Sigma$ consists of all positive integers less or equal $p$ and is simple, i. e. the poles of $\zeta_{b}$ are all simple. All spectral triples we are going to work with have simple dimension spectrum. \\
Let us introduce pseudo differential calculus (see \cite{connes_index} for a full discussion). Let $\text{dom}(\delta)$ denote the domain of the map $\delta$ which consists of all $a \in \B(\H)$ such that $\delta(a)$ is densely defined on $\H$ and can be extended to a bounded operator. We shall define the order of operators by the following filtration: For $ r \in \RH $ set
\begin{equation} \label{pseudo}
 OP^{0} = \bigcap_{n \geq 0} \text{dom} (\delta ^{n} ) , \quad OP^{r} = \mid \D \mid ^{r} OP^{0}.
\end{equation}
Due to regularity we obtain $\B \subset OP^{0} $. Additionally, for every $P $ in $OP^{r}$ the operator $ \AD ^{-r} P $ is densely defined and has a bounded extension. \\
Let $\nabla$ be the derivation $\nabla(T) = \lcom \D^2 , T \rcom $ for an operator $T$ on $\H$. Consider the algebra $ \mathfrak{D}$ generated by
\begin{equation*}
\nabla^k(T), \quad T \in \A \,   \text{ or } \,  \lcom \D , \A \rcom 
\end{equation*} 
for $k \geq 0$. The algebra $\mathfrak{D}$ is the analogue of the algebra of differential operators. By corollary B.1 of \cite{connes_index} we obtain $\nabla^k(T) \in OP^k$ for any $T$ in $\A$ or $\lcom \D, \A \rcom$. \\ 
We introduce the notation
\begin{equation*}
\tau_{k} \left( b \mid \D \mid ^{y} \right)  = \res_{z=0} \, z^{k} \, \tr \left( b \mid \D \mid^{y-2z} \right),
\end{equation*}
for $ b \in \mathfrak{D} $, $ y \in \Sigma$ and an integer $ k \geq 0 $. These residues are independent of the choice of the definition of $\AD ^{-1}$ on the kernel of $\D$ because the kernel is finite dimensional. A change of the definition of $\AD^{-1}$ on the kernel of $\AD$ results in a change by a trace class operator. \\

Now we are ready to state the local index theorem proven in \cite{connes_index} for spectral triples which fulfil the three conditions above. First, let us recall the definition of the index. Let $ u \in \A$ be a unitary. Then, by definition of a spectral triple, the commutator $\lcom \D, \pi(u) \rcom$ is bounded. This forces the compression $P \pi(u) P$ of $\pi(u)$ to be a Fredholm operator (if $F = \sgn \D $ then $P=(F+1)/2$). The index is defined by 
\begin{equation*}
\ind ( P \pi(u) P ) = \text{dim ker} \, P \pi(u)P - \text{dim ker} \, P \pi(u)^\ast P.
\end{equation*}
The index map $u \mapsto \ind(P \pi(u)P)$ defines a homomorphism from the $K_1$-group of the pre-$C^{\ast}$-algebra $\A$ to $\ZH$, see \cite{connes}. By the local index theorem this index can be calculated as a sum of residues $\tau_{k}(b \AD^{y})$, for various $b \in \mathfrak{D}$. Since we are only interested in the three dimensional case we state the local index theorem (Corollary II.1 of \cite{connes_index}) only for this case.

\begin{thm} \label{local_index}
Let $(\A, \H, \D)$ be a three dimensional spectral triple fulfilling the three conditions above and let $u \in \A$ be a unitary. Then 
\begin{equation*}
\ind \, P \pi(u) P = \phi_{1}( u^{*}du) - \phi_{3}(u^{*}dudu^{*}du),
\end{equation*}
where
\begin{align*} 
 \phi_{3}(a^0da^1da^2da^3) = & \quad \dfrac{1}{12} \, \tau_0 \left( \pi(a^0) \lcom \D, \pi(a^1) \rcom \lcom \D, \pi(a^2) \rcom \lcom \D, \pi(a_{3}) \rcom \AD^{-3}  \right) \\
&- \dfrac{1}{6} \, \tau_1 \left( \pi(a^0) \lcom \D, \pi(a^1) \rcom \lcom \D, \pi(a^2) \rcom\lcom \D, \pi(a^3) \rcom \AD^{-3}  \right),
\end{align*}
and
\begin{align*}
 \phi_{1} (a^{0}da^{1})  =& \quad \tau_0 \left( \pi(a^0) \lcom \D, \pi(a^1) \rcom \AD^{-1} \right) \\ 
			&- \dfrac{1}{4} \, \tau_0 \left( \pi(a^0) \nabla \left( \lcom \D, \pi(a^1) \rcom \right) \AD^{-1} \right) \\
			&- \dfrac{1}{2} \, \tau_1 \left( \pi(a^0) \nabla \left( \lcom \D, \pi(a^1)\rcom \right) \AD^{-3} \right) \\
			&+ \dfrac{1}{8} \, \tau_0 \left( \pi(a^0) \nabla^{2} \left( \lcom \D, \pi(a^1) \rcom \right) \AD^{-5} \right) \\
			&+ \dfrac{1}{3} \, \tau_1 \left( \pi(a^0) \nabla^{2}\left( \lcom \D, \pi(a^1) \rcom \right) \AD^{-5} \right) \\
			&+ \dfrac{1}{12}\, \tau_2 \left( \pi(a^0) \nabla^{2} \left( \lcom \D, \pi(a^1) \rcom \right) \AD^{-5} \right)
\end{align*}
for $a^0,a^1,a^2,a^3 \in \A$. In addition, the pair $(\phi_3, \phi_1)$ defines a cyclic cocycle in Connes' $(B,b)$-complex.
\end{thm}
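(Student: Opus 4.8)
The statement is the three-dimensional instance of the general odd local index formula of Connes and Moscovici (Corollary II.1 of \cite{connes_index}), so my plan is to deduce \emph{both} the index equality and the cocycle property from the general theorem by four steps: (a) record the general residue cochains $\phi_n$; (b) truncate the infinite sum to the degrees $n=1$ and $n=3$ using the order filtration \eqref{pseudo}; (c) read off the explicit rational coefficients by substituting $n=1,3$ into the universal constants; and (d) confirm that the surviving pair $(\phi_3,\phi_1)$ satisfies the $(B,b)$-cocycle identities. The genuine content is the specialization and bookkeeping, not a new analytic input.

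First I would recall the general formula. For an odd, finitely summable spectral triple satisfying the Dimension, Regularity and Dimension Spectrum conditions, \cite{connes_index} produces for each odd $n$ a cochain
\[
\phi_n(a^0,\dots,a^n)=\sum_{\kappa}\;\sum_{k} c_{n,\kappa,k}\,\tau_{k}\!\left(\pi(a^0)\,[\D,\pi(a^1)]^{(\kappa_1)}\cdots[\D,\pi(a^n)]^{(\kappa_n)}\,\AD^{-(n+2|\kappa|)}\right),
\]
where $\kappa=(\kappa_1,\dots,\kappa_n)$ is a multi-index, $(\kappa_j)$ denotes $\nabla^{\kappa_j}$ applied to $[\D,\pi(a^j)]$, the index runs over $0\le k\le |\kappa|+\tfrac{n-1}{2}$, and the $c_{n,\kappa,k}$ are the universal constants of \cite{connes_index}. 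The general theorem asserts that the collection $(\phi_n)$ is a cocycle in the $(B,b)$-bicomplex and that its pairing with the class of the unitary $u$ in $K_1(\A)$ computes $\ind P\pi(u)P$. Since the odd Chern character of $u$ has components proportional to $u^\ast\ot u\ot u^\ast\ot\cdots$, I would expand this pairing and, absorbing the $\sqrt{2\pi i}$ and factorial normalizations into the $c_{n,\kappa,k}$, identify it with the alternating sum $\phi_1(u^\ast du)-\phi_3(u^\ast du\,du^\ast du)-\cdots$, where in the cochain notation of Section~2 one has $\phi_1(u^\ast du)=\phi_1(u^\ast,u)$ and $\phi_3(u^\ast du\,du^\ast du)=\phi_3(u^\ast,u,u^\ast,u)$.

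The decisive step is the truncation, which is where the hypotheses enter. By corollary B.1 of \cite{connes_index} each $\nabla^{\kappa_j}([\D,\pi(a^j)])$ lies in $OP^{\kappa_j}$, so the product preceding $\AD^{-(n+2|\kappa|)}$ lies in $OP^{|\kappa|}$ and the whole operator lies in $OP^{-(n+|\kappa|)}$. As the dimension is $3$, any operator of order strictly below $-3$ is trace class, whence $z\mapsto\tr(\,\cdot\,\AD^{-2z})$ is regular at $z=0$ and all residues $\tau_k$ vanish; thus a term survives only when $n+|\kappa|\le 3$. For odd $n$ this forces $n=1$ with $|\kappa|\in\{0,1,2\}$ and $n=3$ with $|\kappa|=0$, and the pole-order bound $k\le|\kappa|+\tfrac{n-1}{2}$ leaves exactly $\tau_0$ (for $n{=}1,|\kappa|{=}0$), $\tau_0,\tau_1$ (for $n{=}1,|\kappa|{=}1$ and for $n{=}3$), and $\tau_0,\tau_1,\tau_2$ (for $n{=}1,|\kappa|{=}2$). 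Substituting these data into $c_{n,\kappa,k}$ then yields precisely the two terms of $\phi_3$ with coefficients $\tfrac1{12},-\tfrac16$ and the six terms of $\phi_1$ with coefficients $1,-\tfrac14,-\tfrac12,\tfrac18,\tfrac13,\tfrac1{12}$. For the ``in addition'' clause, the cocycle property is guaranteed by the general theorem once the truncation is justified; as an independent check I would verify the three identities $B\phi_1=0$, $b\phi_1+B\phi_3=0$ and $b\phi_3=0$ directly, using the explicit formulas for $b$ and $B$ from Section~2, the trace property of the functionals $\tau_k$, and $\nabla(T)=[\D^2,T]=\D[\D,T]+[\D,T]\D$.

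I expect the main obstacle to be step (c) together with the pole-order bookkeeping in the truncation: extracting the precise rationals multiplying the $\nabla$- and $\nabla^2$-terms of $\phi_1$ from the universal constants, and rigorously bounding the order of each $\tau_k$ contribution, both require careful tracking in the pseudodifferential filtration \eqref{pseudo} and of the dimension spectrum. By contrast the index-pairing identification and the $(B,b)$-cocycle verification are comparatively formal once the explicit cochains are in hand.
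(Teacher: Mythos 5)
Your proposal is correct and takes essentially the same route as the paper, which gives no proof of its own but quotes the statement verbatim as Corollary II.1 of Connes--Moscovici \cite{connes_index}; your four steps (general residue cochains, truncation via the filtration \eqref{pseudo} in dimension $3$ to force $n+|\kappa|\le 3$, extraction of the rational constants, and the identities $B\phi_1=0$, $b\phi_1+B\phi_3=0$, $b\phi_3=0$) are precisely the specialization carried out there. As an incidental payoff, your power bookkeeping $\AD^{-(n+2|\kappa|)}$ yields $\AD^{-3}$ in the $-\tfrac14\,\tau_0$ term of $\phi_1$, which indicates that the $\AD^{-1}$ printed in the paper's statement of that term is a typo.
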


In order to prove the main result of this thesis, the gauge invariance of the Chern-Simons action, which is defined in the following section, we need the trace property of the cochain $ \phi_3 $, i. e. 
\begin{equation*}
\phi_3 (AB) = \phi_3 (BA)
\end{equation*}
for all $ A \in \Omega^{i}(\A)$ and $ B \in \Omega^{3-i}(\A) $, with $i = 0, 1,2, 3$.

\begin{lem} \label{trace_property}
 Let $ ( \A, \H , \D ) $ be a three dimensional spectral triple fulfilling the three conditions above. Then for every operator $A$ and $B$ in $\A $ or $\lcom \D, \A \rcom$, and $k \geq 0$ the equation 
\begin{equation*}
\tau_k \left( A B \AD^{-3}  \right)  = \tau_k \left( B A \AD^{-3}  \right) 
\end{equation*}
holds true.
\end{lem}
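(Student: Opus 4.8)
The plan is to reduce the asserted equality to a single residue computation and then to kill that residue using the pseudodifferential expansion of the complex power $\AD^{-3-2z}$. First I would record that both $A$ and $B$ lie in $OP^0$: by the regularity condition every element of $\A$ or $\lcom \D, \A \rcom$ belongs to $\bigcap_{n}\mathrm{dom}(\delta^n)=OP^0$, so in particular $A$ and $B$ are bounded and $AB,BA\in \B$. For $\Re z$ large the power $\AD^{-3-2z}$ is trace class, hence $AB\AD^{-3-2z}$ and $BA\AD^{-3-2z}$ are trace class and cyclicity of the trace gives $\tr(AB\AD^{-3-2z})=\tr(B\AD^{-3-2z}A)$. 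Subtracting $\tr(BA\AD^{-3-2z})$ yields
\[
\tr\!\left(AB\AD^{-3-2z}\right)-\tr\!\left(BA\AD^{-3-2z}\right)=\tr\!\left(B\,[\AD^{-3-2z},A]\right).
\]
By the dimension-spectrum condition all three zeta-type functions continue meromorphically in $z$, so this identity persists after continuation, and it remains to prove that $\res_{z=0}z^k\,\tr\!\left(B\,[\AD^{-3-2z},A]\right)=0$ for every $k\geq 0$.

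The heart of the matter is the asymptotic expansion of the commutator of $A$ with a complex power, supplied by the Connes--Moscovici pseudodifferential calculus of \cite{connes_index}: for every $N$ one has
\[
[\AD^{-3-2z},A]=\sum_{j=1}^{N}c_j(z)\,\nabla^{j}(A)\,\AD^{-3-2z-2j}+R_N(z),
\]
where the coefficients $c_j(z)$ are entire (of binomial type) and the remainder $R_N(z)$ has order decreasing without bound as $N\to\infty$. The decisive structural input is Corollary B.1 of \cite{connes_index}, already quoted above, namely $\nabla^{j}(A)\in OP^{j}$; this is exactly what makes the commutator drop the order by one unit relative to $\AD^{-3-2z}$, and hence strictly below $-3$ near $z=0$.

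I then examine a single summand. Since $\nabla^{j}(A)\in OP^{j}$, the operator $T_j:=B\,\nabla^{j}(A)\,\AD^{-j}$ lies in $OP^{0}$ and is bounded, and $B\,\nabla^{j}(A)\,\AD^{-3-2z-2j}=T_j\,\AD^{-(3+j)-2z}$. For $j\geq 1$ the exponent $3+j$ exceeds the dimension $3$, so for $z$ in a neighbourhood of $0$ the operator $\AD^{-(3+j)-2z}$ is trace class and depends holomorphically on $z$; therefore $z\mapsto \tr\!\left(T_j\,\AD^{-(3+j)-2z}\right)$ is holomorphic at $z=0$, and $z^{k}$ times it has vanishing residue for all $k\geq 0$. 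The same trace-class estimate, applied with $N$ chosen large enough that $R_N(z)$ has order below $-3$ near $z=0$, shows that $\tr\!\left(B\,R_N(z)\right)$ is holomorphic at $z=0$ as well. Because the residue is linear and each $c_j(z)$ is holomorphic, every contribution vanishes, so $\res_{z=0}z^k\,\tr\!\left(B\,[\AD^{-3-2z},A]\right)=0$ and hence $\tau_k\!\left(AB\AD^{-3}\right)=\tau_k\!\left(BA\AD^{-3}\right)$.

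I expect the main obstacle to be the rigorous justification of the commutator expansion: establishing the displayed identity with an explicitly controlled remainder $R_N(z)$ of arbitrarily negative order, and checking that each $T_j$ and each $R_N(z)$ lie in the operator class for which the traces are defined and (mero)morphic in $z$. Once the pseudodifferential calculus of \cite{connes_index} is invoked at this level, the order bookkeeping — the single extra power of $\AD^{-1}$ gained from the commutator, which pushes the order strictly below $-\dim = -3$ — makes the vanishing of the residue immediate.
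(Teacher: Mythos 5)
Your proof is correct and follows essentially the same route as the paper's: after using cyclicity of the trace, everything reduces to showing that commuting $A$ past the complex power $\AD^{-3-2z}$ only produces terms of order at most $-4$, which are trace class near $z=0$ and therefore contribute nothing to any residue $\res_{z=0} z^k(\cdot)$. The only difference is organizational: the paper splits $\AD^{-3-2z}=\AD^{-2z}\AD^{-3}$, iterates the exact identity $\AD^{-1}A=A\AD^{-1}-\AD^{-1}\lcom \AD,A\rcom\AD^{-1}$ through the integer power and then invokes Theorem B.1 of \cite{connes_index} for the $\AD^{-2z}$ factor, whereas you commute $A$ through the full complex power in one step via the asymptotic expansion of $\lcom\AD^{-3-2z},A\rcom$ with controlled remainder --- both arguments rest on the same inputs from the Connes--Moscovici pseudodifferential calculus and the same trace-class criterion for $OP^{-3-\epsilon}$.
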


\begin{proof}
Recall the filtration \eqref{pseudo}. If $P$ is an operator on $\H$ and $P \in OP^{-3-\epsilon}$ for some $\epsilon > 0$, then $P$ is trace class. Let $ A $ and $B $ be in $ \A$ or $ \lcom \D, \A \rcom $. If one uses the relation $ \AD^{-1} A = A \AD^{-1} - \AD^{-1} \lcom \AD , A \rcom \AD^{-1} $ one obtains
\begin{align*}
 \tau_k \left( A B \AD^{-3} \right) =& \res_{z = 0} \, z^{k} \, \tr \left( A B \AD ^{-3-2z} \right) \\
						 			=& \res_{z = 0} \, z^{k} \, \tr \left( B \AD ^{-3-2z} A \right) \\
						 			=& \res_{z = 0} \, z^{k} \, \tr \left( B \AD ^{-2z} A \AD ^{-3} \right) \\
									 & + \sum^{3}_{i=1} \res_{z = 0} \, z^{k} \, \tr \left( B \AD ^{-2z - i} A \AD ^{-3 + ( i - 1 )} \right) \, .
\end{align*}
The operators in the last row are all trace class for $ \text{Re}(z) > -1/2 $, thus the residues of their traces at $ z = 0 $ vanish. By theorem B.1 of \cite{connes_index} we have
\begin{equation*}
\AD ^{-2z} A \AD ^{2z} - A   \in OP^{-1} \, .
\end{equation*}
This implies the identity
\begin{equation*}
 B \AD ^{-2z} A \AD ^{-3} = B \AD ^{-2z} A \AD ^{2z} \AD ^{-3-2z} \equiv B  A \AD ^{-3 - 2z} \, \text{mod}\, OP^{-4} \, . 
\end{equation*}
Summarizing these observations yields the claimed identity. 
\end{proof}

\section{Noncommutative Chern-Simons Action}

If $( \A, \H, \D ) $ is a spectral triple, then $( M_{N}( \A ), \H \otimes \CH^{N}, \D \otimes I_{N} ) $ is a spectral triple over the matrix algebra $M_N(\A)$ of $\A$, for a positive integer $N \in \NH$, satisfying all requirements imposed to $( \A, \H, \D )$. In the sequel, we shall work with the second one, whose differential forms $\Omega^1(M_{N}(\A)) = M_{N} \left( \Omega^1(\A) \right) $ are matrices with values in $\Omega^1(\A)$. The Dirac operator of the extended triple will be also denoted by $\D$ instead of $\D \otimes I_{N}$. A matrix 1-form $A = \left( a_{ij} db_{ij} \right) _{1 \leq i,j \leq N}$ is called hermitian if $A = A ^{\ast}$ with
\begin{equation*}
\left( \left( a_{ij} db_{ij} \right) _{i,j \leq N } \right) ^{\ast} = \left( \left( a_{ji}db_{ji} \right)^{\ast} \right) _{1 \leq i,j \leq N } \, ,
\end{equation*} 
where the $\ast$-operator on $\Omega(\A)$ is defined as follows: 
\begin{equation*}
 (a^0da^1 \cdots da^n )^{\ast} = (-1)^n d a^{n \ast} \cdots da^{1 \ast} a^{0 \ast} \, 
\end{equation*} 
for $a^0, \ldots, a^n \in \A$.

\begin{defn} \label{CS}
 Let $( \A, \H, \D ) $ be a spectral triple satisfying the conditions of section 2.1, $N \in \NH$, and $A \in M_{N} \left( \Omega^1(\A) \right)$ a hermitian matrix of 1-forms. We define a Chern-Simons action as \[S_{CS}(A) = 6 \pi k \phi_{3}\left( AdA +  \frac{2}{3} A ^{3} \right) - 2 \pi k \phi_{1}(A)\] for an integer $k$, with the cyclic cocycle $(\phi_3,\phi_1)$ of the local index theorem \ref{local_index}.
\end{defn} 

Firstly, we have to study the behaviour of the action under gauge transformations. The transformed action should differ from the initial one only by $ 2 \pi m$, with $m \in \ZH$. Secondly, we show that the noncommutative Chern-Simons action is the classical one \eqref{eq_1.1} if the spectral triple $(C^{\infty}(M), L^{2}(\S), \D)$ comes from a $3$-dimensional, closed spin manifold $M$ with gauge group $SU(N)$. \\
We start by introducing the noncommutative replacement of a gauge mapping. The gauge group of $\left( M_N ( \A ), \H \ot \CH^N, \D \ot I_N \right) $ is the group of unitary elements of $ M_{N}( \A )$. An element $u \in  M_{N}( \A )$ is called unitary if $u^\ast u= uu^\ast =1$. A gauge transformation of a 1-form $A \in M_{N} ( \Omega^1(\A) )$ by a unitary $u \in M_{N}( \A )$ is given as follows:
\begin{equation*}
A \mapsto A^u = uAu^{\ast} + u du^{\ast} \, . 
\end{equation*}
If $A \in M_N(\Omega^1(\A))$ is a hermitian matrix of $1$-forms, then this holds true for $A^u$ too.

\begin{thm} 
 Let $( \A, \H, \D ) $ be a spectral triple satisfying the conditions of the second section, $N \in \NH$, and $A \in M_{N} ( \Omega^1(\A) )$ a hermitian matrix of 1-forms. Then under the gauge transformation of $A$ by a unitary $u \in M_{N}(\A)$, the Chern-Simons action becomes 
\begin{equation*}
 S_{CS}(uAu^{\ast} + u du^{\ast}) = S_{CS}(A) + 2 \pi k \,  \ind(P \pi(u) P), 
\end{equation*}
where $P = (1+F)/2$ and $F = \sgn  \D$ is the sign of $\D$. 
\end{thm}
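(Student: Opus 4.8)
The plan is to expand $S_{CS}(A^{u})$ directly from Definition \ref{CS} with $A^{u} = uAu^{\ast} + u\,du^{\ast}$, strip off $S_{CS}(A)$, and show that the surviving \emph{pure gauge} remainder is exactly what the local index theorem \ref{local_index} evaluates to $2\pi k\,\ind(P\pi(u)P)$. Throughout I would use $u^{\ast}u = uu^{\ast}=1$ together with the consequences of $d(uu^{\ast})=0$, namely $u\,du^{\ast} = -du\,u^{\ast}$ and the Maurer--Cartan identity $d(u\,du^{\ast}) = -(u\,du^{\ast})^{2}$ (and likewise for $u^{\ast}du$). Substituting $A^{u}$ into $6\pi k\,\phi_{3}\!\left(A^{u}dA^{u} + \tfrac{2}{3}(A^{u})^{3}\right) - 2\pi k\,\phi_{1}(A^{u})$ produces three families of terms: those depending only on $A$, mixed $A$--$u$ terms, and purely gauge terms built from $u\,du^{\ast}$.

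The first technical step is to upgrade Lemma \ref{trace_property} to a graded trace property of $\phi_{3}$ on forms, $\phi_{3}(\omega\eta) = (-1)^{|\omega||\eta|}\phi_{3}(\eta\omega)$ for $\omega\in\Omega^{i}(\A)$ and $\eta\in\Omega^{3-i}(\A)$: each $da^{j}$ is represented by the odd operator $\lcom\D,\pi(a^{j})\rcom$, so transposing two forms produces precisely the Koszul sign while the underlying residues $\tau_{k}(\,\cdot\,\AD^{-3})$ are cyclic by the Lemma. With this in hand every sandwich $u(\cdots)u^{\ast}$ in the cubic part can be cyclically rotated and collapsed via $u^{\ast}u=1$; this reduces the $A$-only contribution of $\phi_{3}$ to $\phi_{3}(AdA+\tfrac{2}{3}A^{3})$ and organises all mixed terms into expressions of the shape $\phi_{3}(d\eta)$ with $\eta\in\Omega^{2}(\A)$.

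The decisive point — and the reason the linear term is present in Definition \ref{CS} — is that $\phi_{3}$ is a graded trace but is \emph{not closed}; its failure to be closed is governed by $\phi_{1}$. Indeed, from the graded cyclicity of $\phi_{3}$ and the formula \eqref{B_operator} for $B$ one finds $(B\phi_{3})(\eta) = 3\,\phi_{3}(d\eta)$ for $\eta\in\Omega^{2}(\A)$, so the cocycle relation $b\phi_{1} + B\phi_{3} = 0$ (one of the identities expressing that $(\phi_{3},\phi_{1})$ is a cocycle in Connes' $(B,b)$-complex, Theorem \ref{local_index}) gives $\phi_{3}(d\eta) = -\tfrac{1}{3}(b\phi_{1})(\eta)$. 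The coefficients $6\pi k$ and $-2\pi k$ are tuned precisely so that a term $6\pi k\,\phi_{3}(d\eta)$ becomes $-2\pi k\,(b\phi_{1})(\eta)$, and the latter cancels exactly against the mixed contribution $-2\pi k\,[\phi_{1}(uAu^{\ast})-\phi_{1}(A)]$ of the linear part of the action. I expect \emph{this} to be the main obstacle: verifying that all mixed $A$--$u$ terms reorganise, through the $(B,b)$-cocycle identities, into exactly the combination annihilated by the $\phi_{1}$ counterterm. It is the step that genuinely distinguishes the present setting, where $(\phi_{3},\phi_{1})$ is only a $(B,b)$-cocycle, from the classical case and from the Connes--Marcolli construction, where the relevant cochain is a single closed graded trace and no linear correction is required.

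After this cancellation only the pure-gauge remainder survives, and I would evaluate it explicitly. The Maurer--Cartan identity gives $(u\,du^{\ast})d(u\,du^{\ast}) + \tfrac{2}{3}(u\,du^{\ast})^{3} = -\tfrac{1}{3}(u\,du^{\ast})^{3}$; then using $u\,du^{\ast} = -u(u^{\ast}du)u^{\ast}$, the trace property of $\phi_{3}$, and $(u^{\ast}du)^{3} = -\,u^{\ast}du\,du^{\ast}du$ one obtains $6\pi k\,\phi_{3}\!\left((u\,du^{\ast})d(u\,du^{\ast}) + \tfrac{2}{3}(u\,du^{\ast})^{3}\right) = -2\pi k\,\phi_{3}(u^{\ast}du\,du^{\ast}du)$, while the linear pure-gauge term reorganises, via $\lcom\D,\pi(u^{\ast})\rcom = -\pi(u^{\ast})\lcom\D,\pi(u)\rcom\pi(u^{\ast})$ and the explicit formula for $\phi_{1}$, into $-2\pi k\,\phi_{1}(u\,du^{\ast}) = 2\pi k\,\phi_{1}(u^{\ast}du)$. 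Summing,
\[
S_{CS}(A^{u}) - S_{CS}(A) = 2\pi k\big(\phi_{1}(u^{\ast}du) - \phi_{3}(u^{\ast}du\,du^{\ast}du)\big) = 2\pi k\,\ind(P\pi(u)P)
\]
by the local index theorem \ref{local_index}, which is the claimed formula.
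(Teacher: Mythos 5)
Your proposal is correct and follows essentially the same route as the paper: the trace property of $\phi_3$ from Lemma \ref{trace_property}, the cocycle identity $b\phi_1=-B\phi_3$ combined with $B\phi_3(\eta)=3\phi_3(d\eta)$ to cancel the mixed $A$--$u$ terms against the $\phi_1$ counterterm, and the local index formula \ref{local_index} for the pure-gauge remainder. The differences are cosmetic --- the paper organises the cubic part via the covariance $F\mapsto uFu^\ast$ of the curvature rather than Maurer--Cartan and lands on $-\ind(P\pi(u^\ast)P)$ before invoking the index homomorphism; note also that your Koszul signs are all $+1$ here (one of $i$, $3-i$ is always even), so your graded trace property coincides with the paper's ungraded one, and the identity $\phi_1(u\,du^\ast)=-\phi_1(u^\ast du)$ is obtained most cleanly from $\phi_1(u\,du^\ast)+\phi_1(u^\ast du)=b\phi_1(du\,du^\ast)=-B\phi_3(du\,du^\ast)=0$ rather than from the explicit six-term formula for $\phi_1$, whose individual terms are not covered by Lemma \ref{trace_property}.
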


\begin{proof}
First, let us introduce the curvature $F = dA + A^{2}$ of $A$. We have
\begin{equation*}
AdA + \dfrac{2}{3} A^{3} = AF - \dfrac{1}{3}A^{3}.  
\end{equation*}
Under gauge transformation $A$ becomes $uAu^{\ast} + u d u^{\ast}$ and $F$ transforms into $uFu^{\ast}$. This follows from $0 = d1 = d \left( u u^{\ast} \right) =  \left( d u \right)  u^{\ast} + u du^{\ast} $ and the calculation
\begin{align*}
 & \quad d \left( u A u^{\ast} + u d u^{\ast} \right) + \left( u A u^{\ast} + u d u^{\ast} \right)^2  \\
=& \quad du A u^{\ast} + u dA u^{\ast} - u A du^{\ast} + du du^{\ast} + u A^{2} u^{\ast} + u du^{\ast} \left( u A u^{\ast} \right) + u A u^{\ast} \left( u d u^{\ast} \right) \\
&+ u d u^{\ast} \left( u d u^{\ast} \right) \\
=& \quad u A^2  u^{\ast} + u dA u^{\ast} \\
=& \quad u F u^{\ast}
\end{align*}
Therefore, we have 
\begin{align*}
S_{CS} \left( u A u^{\ast} + u d u^{\ast} \right) = & \quad 6 \pi k \phi_{3}\left( \left( u A u^{\ast} + u d u^{\ast}\right)u F u^{\ast} -  \frac{1}{3} \left( u A u^{\ast} + u d u^{\ast}\right) ^{3} \right)  \\
& - 2 \pi k \phi_{1}(u A u^{\ast} + u d u^{\ast}).
\end{align*}
If one uses the trace property of $\phi_3$, lemma \ref{trace_property}, and $du^{\ast}u = - u^{\ast} du $, we obtain
\begin{equation*}
\phi_{3}\left( \left( u A u^{\ast} + u d u^{\ast}\right)u F u^{\ast}  \right) = \phi_3 \left( AF \right) - \phi_3 \left( duF u^{\ast} \right) 
\end{equation*}
and
\begin{align*}
 &-\dfrac{1}{3}\phi_3 \left( \left( u A u^{\ast} + u d u^{\ast}\right) ^{3} \right) \\
 = &-\dfrac{1}{3} \phi_3 \left( A^3 \right) - \dfrac{1}{3} \phi_3 \left( \left( u du^{\ast} \right) ^{3} \right) - \phi_3 \left( u d u^{\ast} u A^{2} u^{\ast} \right) - \phi_3 \left( u A u^{\ast} \left( u d u^{\ast} \right)^{2} \right) \\
 = & -\dfrac{1}{3} \phi_3 \left( A^3 \right) + \dfrac{1}{3} \phi_3 \left( u du^{\ast}dudu^{\ast} \right) - \phi_3 \left( u d u^{\ast} u A^{2} u^{\ast} \right) + \phi_3 \left( du A du^{\ast} \right) 
\end{align*}
Due to the fact that $(\phi_{3}, \phi_{1})$ is a cyclic cocycle, the term $\phi_{1} \left( u A u^{\ast} \right) $ can be rewritten in a more convenient form:
\begin{align*}
- \phi_{1} \left( u A u^{\ast} \right) + \phi_1 \left( A \right) &= - \phi_{1} \left( \left[ uA, u^{\ast} \right] \right) \\  
								 &= \quad b \phi_1 \left( u A du^{\ast} \right) \\
								 &= - B \phi_3 \left( u A du^{\ast} \right) \\
								 &= - 3 \phi_3 \left( d \left( u A \right) du^{\ast} \right) \, ,
\end{align*}
where the last equality follows by the definition \eqref{B_operator} of the operator $B$ and the trace property of $\phi_3$. Therefore, 
\begin{equation*}
\phi_1 \left( u A u^{\ast} \right) = \phi_{1} \left( A \right) + 3 \phi_{3}  \left( d( u A) d u^{\ast}  \right).
\end{equation*}
Gathering all these terms yields 
\begin{align*}
 & S_{CS} \left( u A u^{\ast} + u d u^{\ast} \right) \\
 = & \quad 6 \pi k \phi_{3}\left(AF - \dfrac{1}{3} A^3 \right) - 6 \pi k \phi_3 \left( d u F u^{\ast} \right) + 2 \pi k \phi_3 \left( u du^{\ast}dudu^{\ast} \right) \\
 & - 6 \pi k \phi_3 \left( u d u^{\ast} u A^{2} u^{\ast} \right) + 6 \pi k \phi_3 \left( du A du^{\ast} \right) \\
 & - 2 \pi k \phi_1 \left(A \right) - 6 \pi k \phi_3 \left( d \left( u A \right) du^{\ast} \right) - 2 \pi k \phi_1 \left( udu^{\ast} \right) \\
= & \quad S_{CS}(A) - 2 \pi k \, \ind \left(P \pi \right( u^\ast \left) P \right)  \\
&+ 6 \pi k  \phi_3 \left( du A du^{\ast} - du F u^{\ast} - u d u^{\ast} u A^{2} u^{\ast} \right) - 6 \pi k  \phi_3 \left( d \left( u A \right) du^{\ast} \right)
\end{align*}
Using again of the trace property \ref{trace_property} of $\phi_3$ and the relation $du^{\ast}u = - u^{\ast} du $ one can simplify the last term
\begin{align*}
\phi_3 \left(du A du^{\ast} - du F u^{\ast} - u d u^{\ast} u A^{2} u^{\ast} \right) &= \phi_3 \left(du A du^{\ast} - du \left( d A + A^{2} \right)  u^{\ast} + d u A^{2} u^{\ast} \right) \\
&= \phi_3 \left(du A du^{\ast} - du dA u^{\ast} \right) \\
&= \phi_3 \left(du A du^{\ast} + dA du^{\ast} u \right) \\
&= \phi_3 \left( d(uA)du^\ast \right) \, .
\end{align*}
The index map $u \mapsto \ind(P\pi(u)P)$ is a homomorphism from the $K_1$-group of $\A$ to $\ZH$. Thus we have $- \ind(P \pi(u^{\ast}) P) = \ind(P \pi(u) P)$, and the proof is done.
\end{proof}

Next we have to prove that for spectral triples $(C^{\infty}(M), L^{2}(\S), \D)$ coming from a $3$-dimensional, closed spin manifold $M$ with spinor bundle $S$, the definition \ref{CS} of a noncommutative Chern-Simons action coincides with the classical one \eqref{eq_1.1}. Essentially, this is given by the following theorem, see \cite{connes_index}, remark II.1.

\begin{thm} \label{commutative}
 Let $( \A, \H, \D ) $ consisting of the Dirac operator $\D$ acting on the Hilbert space of $L^{2}$-spinors over a closed spin manifold $M$, of dimension $p$, and with $\A = C^{\infty}(M)$. Then:
\begin{itemize}
 \item the dimension of the triple $( \A, \H, \D ) $ is simple and contained in the set $\left\lbrace n \in \NH; \, n \leq p \right\rbrace $. This forces $\tau_k = 0$ for $k \geq 1$; 
 \item one has 
 \begin{equation*}
 \tau_0 \left( \pi(a^{0}) \nabla^{k_{1}}\lcom \D, \pi(a^{1}) \rcom \ldots \nabla^{k_{n}}\lcom \D, \pi(a^{n}) \rcom \AD^{-n- 2\mid k \mid} \right) = 0,
 \end{equation*}
for $\mid k \mid \neq 0$; with $k = ( k_1 , \ldots, k_n ) \in \NH$ and $n \leq q$.
 \item for $k_1 = k_2 = \ldots = k_n = 0$, one has 
 \begin{equation*}
 \tau_0 \left( \pi(a^0) \lcom \D, \pi(a^1) \rcom \ldots \lcom \D , \pi(a^n) \rcom  \mid \D \mid^{-p} \right) = \nu_p \int_{M} \hat{A}(R) \wedge a^0 da^1 \wedge \ldots \wedge da^n ,
 \end{equation*}
 where $\nu_p$ is a numerical factor depending only on the dimension $p$ of the manifold $M$ and $\hat{A}(R)$ stands for the $\hat{A}$-form of the Riemannian curvature of $M$. 
\end{itemize}
\end{thm}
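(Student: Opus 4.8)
The plan is to transport the entire statement into the classical pseudodifferential calculus on the spin manifold $M$, where $\D$ is a self-adjoint, first-order elliptic operator. Every operator occurring in the three assertions is then a classical pseudodifferential operator: $\pi(a)$ has order $0$, the commutator $\lcom \D, \pi(a) \rcom$ is Clifford multiplication by $da$ and is again of order $0$, the complex powers $\AD^{s}$ are the Seeley powers of the positive elliptic operator $\AD^{2} = \D^2$, and $\nabla(T) = \lcom \D^2, T \rcom$ raises the order by one. For the first assertion I would invoke Seeley's theorem on complex powers: for $b \in \B$ (order $0$) the function $z \mapsto \tr(b \AD^{-2z})$ extends meromorphically with at most \emph{simple} poles, located in the discrete set $\set{(p-j)/2 ; j \geq 0}$, which is bounded above by $p$; on a closed manifold the odd heat coefficients vanish, so the half-integer poles drop out and $\Sigma \subseteq \set{n \in \NH; n \leq p}$. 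Simplicity is all that is needed for the vanishing of the higher residues: if $z \mapsto \tr(b \AD^{y-2z})$ has at most a simple pole at $z = 0$, then $z^{k}\tr(b\AD^{y-2z})$ is holomorphic there for every $k \geq 1$, whence $\tau_{k} = \res_{z=0} z^{k}\tr(b\AD^{y-2z}) = 0$.

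The remaining two assertions rest on the identification of $\tau_0$ with the Wodzicki (noncommutative) residue. By Seeley's residue formula one has $\tau_0(Q) = \frac{1}{2}\int_{S^{*}M}\tr_{\S}\big(\sigma_{-p}(Q)\big)$, where $\sigma_{-p}$ is the component of the complete symbol homogeneous of degree $-p$ in the cotangent variable $\xi$, $\tr_{\S}$ is the fibrewise trace over the spinor bundle, and the integration runs over the unit cosphere bundle. To evaluate this I would read off the symbols from the data of $\D$: its principal symbol is Clifford multiplication by $\xi$, so $\sigma(\D^2) = |\xi|^2$; the order-zero symbol of $\lcom \D, \pi(a^{j}) \rcom$ is Clifford multiplication by $da^{j}$; and each application of $\nabla = \lcom \D^2, \cdot \rcom$ produces, at leading order, a factor polynomial of positive degree in $\xi$.

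For the second assertion the operator $\pi(a^0)\nabla^{k_1}\lcom\D,\pi(a^1)\rcom\cdots\nabla^{k_n}\lcom\D,\pi(a^n)\rcom\AD^{-n-2|k|}$ has order $-n-|k|$, and its degree $-p$ symbol component, traced over $\S$ and integrated over the cosphere fibre, reduces to an integral of products of Clifford generators against monomials in $\xi$. When $|k| \neq 0$ the extra derivatives contributed by the $\nabla$'s force either an odd number of Clifford generators, whose spinor trace vanishes, or an odd power of $\xi$, which integrates to zero over the sphere; this parity argument gives the vanishing. For the third assertion, with $|k| = 0$, the operator $P = \pi(a^0)\lcom\D,\pi(a^1)\rcom\cdots\lcom\D,\pi(a^n)\rcom$ has order $0$, so $P\AD^{-p}$ has order exactly $-p$ and its leading symbol is precisely the degree $-p$ component picked out by the residue. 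Expanding $\D^2$ through the Lichnerowicz formula and organizing the subleading symbol terms by Getzler's rescaling, the fibre trace of the Clifford word $\sigma_0(P)$ against the curvature-dependent part of $\AD^{-p}$ assembles into the $\hat A$-form of the Riemannian curvature, giving the claimed equality with a purely dimensional constant $\nu_p$.

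The technical heart — and the step I expect to be the main obstacle — is the last one: showing that the full collection of subleading symbol terms in $P\AD^{-p}$ organizes \emph{exactly} into $\hat A(R)\wedge a^0 da^1 \wedge \cdots \wedge da^n$, with no spurious contributions. This is precisely the content of the heat-kernel/Getzler-rescaling proof of the Atiyah--Singer index theorem, and rather than redo that computation I would appeal to it together with Remark II.1 of \cite{connes_index}, where Connes and Moscovici carry out exactly this specialization of their local index formula to the commutative case.
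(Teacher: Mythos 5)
Your proposal is sound and ends up in the same place as the paper: the paper offers no proof of this theorem at all, stating it as a direct quotation of Remark II.1 of \cite{connes_index}, which is exactly the reference your argument ultimately appeals to for the hardest step, the identification of the residue with $\int_M \hat{A}(R)\wedge a^0da^1\wedge\cdots\wedge da^n$. The additional material you supply --- Seeley's theorem locating the (simple) poles, the observation that simplicity alone forces $\tau_k=0$ for $k\geq 1$, and the symbol-parity argument for the vanishing when $|k|\neq 0$ --- is correct and goes beyond what the paper records, but it is consistent with, and essentially a sketch of, the argument in the cited source.
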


The $\hat{A}$-form of the Riemannian curvature of a $p$-dimensional Riemannian manifold $M$ is a polynomial in the Pontrjagin forms $p_{j}(TM)$ of the tangent bundle $TM$ with the Levi-Civita connection. These Pontrjagin forms $p_{j}(TM)$ are in the $4j$-th cohomology group $H^{4j}(M)$ of $M$. For further details see \cite{gilkey}, chapter 2. Since we are only interested in the case $p=3$, all Pontrajagin forms $p_j$ vanish for $j \geq 1$ and we get $\hat{A} = 1$. Hence the formulas for the cyclic cocycle $(\phi_3 , \phi_1 )$ adopt the following form:

\begin{align} \label{commutative_cycle}
&\phi_1 = 0 \\ \nn
&\phi_3 \left( a^0 da^1 da^2 da^3 \right) = \dfrac{\nu_3}{12} \int_{M} a^0 da^1 \wedge da^2 \wedge da^3,
\end{align}
 for $a^i \in C^{\infty}(M)$. The only thing we need to know is the exact value of $\nu_3$. In order to achieve this we make use of the results of A. Carey, J. Phillips and F. Sukochev in \cite{carey_2}. From theorem 5.6 in \cite{carey_2} we obtain the identity
\begin{equation*}
\lim_{s \rightarrow 3^+} \tr \left( \AD^{-s} \right) = 3 \tau_{\omega} \left( \AD^{-3} \right), 
\end{equation*} 
where $\tau_{\omega}$ denotes the so called \textit{Dixmier trace}, a trace defined on the so called Schatten class $\mathcal{L}^{(1,\infty)}(\H)$, which is an ideal in $\B(\H)$. The precise meaning of $\tau_{\omega}$ and $\mathcal{L}^{(1,\infty)}(\H)$ does not bother us because we get rid of them by formula 7.1 \cite{carey_2} which states the equality
\begin{equation*}
\tau_{\omega} \left(\mid \D \mid ^{-3} \right) = \dfrac{1}{3( 2 \pi^3)} \int_{M \times S^2} \sigma_3 \left( \mid \D \mid^3 \right)(x, \xi)^{-1} \, d\nu(x)d\xi. 
\end{equation*}
The integral term on the right hand side of the equality is called the \textit{Wodzicki residue} $\text{Wres}\left( \AD^{-3} \right)$ of the pseudo differential operator $\AD^{-3}$. From example 5.16 in \cite{Bondia} we obtain 
\begin{equation*}
\text{Wres} \left( \AD^{-3} \right) =  8 \pi \,  \text{Vol}(M).
\end{equation*}
Combining all these identities we obtain 
\begin{align*}
\tau_0 \left( \mid \D \mid^{-3} \right) =& \lim_{z \rightarrow 0} z \, \tr \left( \AD ^{-3-2z} \right) \\
=&   \frac{1}{2} \lim_{s \rightarrow 3^+} s \, \tr \left( \AD ^{-s} \right) \\
=& \frac{3}{2} \tau_{\omega} \left( \AD^{-3} \right) \\
=& \dfrac{1}{2 ( 2 \pi^3)} \, \text{Wres} \left( \AD^{-3} \right) \\
=& \dfrac{1}{2 \pi^2} \, \text{Vol}(M)
\end{align*}
which forces 
\begin{equation*}
\nu_3 = (2 \pi^2)^{-1}.
\end{equation*}
An additional problem arises. One deals with the extended triple $( C^\infty(M) \otimes M_N(\CH), L^2(\S) \otimes \CH^N, \D \otimes I_N )$ instead of the commutative triple $( C^\infty(M), L^2(\S), \D )$. This spectral triple is no longer commutative. But the cyclic cocycle of the the extended triple is given by the pair $( \tr \otimes \phi_3, \tr \otimes \phi_1)$ where $(\phi_3, \phi_1)$ denotes the cyclic cocycle of the initial, commutative spectral triple, and \lgans $\tr$\rgans$\,$ denotes the trace on the $N \times N$-matrices $M_N(\CH)$. It is easy to verify that the formulas for the extended triple adopt a form analogous to \eqref{commutative_cycle}:
\begin{align*}
&\tr \otimes \phi_1 = 0 \\
&\tr \otimes \phi_3 \left( a^0 da^1 da^2 da^3 \right) = \dfrac{\nu_3}{12} \int_{M} \tr \left( a^0 da^1 \wedge da^2 \wedge da^3 \right),
\end{align*}
with $a^i \in M_N(\Omega^1(\A))$. 

\begin{cor} \label{cor_3.3}
 Let $\left( \A, \H, \D \right) $ consisting of the Dirac operator $\D$ acting on the Hilbert space of $L^{2}$-spinors over a closed spin manifold $M$ of dimension $3$, and $\A = C^{\infty}(M)$. Then the Chern-Simons action of definition \ref{CS} adopts the form
\begin{equation*}
S_{CS}(A) =  \dfrac{ k }{4 \pi} \int_{M} \tr \left( A \wedge dA + \dfrac{2}{3} A \wedge A \wedge A \right)
\end{equation*}
for a hermitian one form $ A \in M_{N} \left( \Omega^1 \left( C^{\infty}(M) \right)  \right)$. 
\end{cor}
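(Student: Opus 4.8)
The plan is to feed the commutative data assembled just above the corollary---namely $\tr\otimes\phi_1 = 0$, the explicit formula for $\tr\otimes\phi_3$, and the value $\nu_3 = (2\pi^2)^{-1}$---into Definition \ref{CS} and watch the numerical constants collapse to $k/(4\pi)$. First I would specialize Definition \ref{CS} to the extended triple $(C^\infty(M)\otimes M_N(\CH),\, L^2(\S)\otimes\CH^N,\, \D\otimes I_N)$, whose cyclic cocycle is $(\tr\otimes\phi_3,\,\tr\otimes\phi_1)$, and use the identification $\Omega^3(M_N(\A))\cong M_N(\Omega^3(\A))$. Since in the three-dimensional spin case all Pontrjagin forms vanish, $\hat A = 1$ and $\phi_1 = 0$, so the entire linear term $-2\pi k\,\phi_1(A)$ drops out, leaving
\[
S_{CS}(A) = 6\pi k\,(\tr\otimes\phi_3)\!\left(A dA + \tfrac{2}{3}A^3\right).
\]

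The decisive step is to rewrite the abstract degree-$3$ form $AdA + \frac{2}{3}A^3$ in terms of ordinary de Rham forms. I would invoke that the linear assignment $a^0 da^1\cdots da^n \mapsto a^0\,da^1\wedge\cdots\wedge da^n$ extends to a surjective morphism $\rho$ of differential graded algebras from $\Omega(C^\infty(M))$ onto the de Rham complex $\Omega^\bullet_{dR}(M)$: it is well defined (it sends $a^0 da^1\cdots da^n$ to $0$ whenever some $a^i = 1$, matching $d1 = 0$), it intertwines the noncommutative $d$ with the exterior derivative, and it carries the noncommutative product to the wedge product. Tensoring with the identity on $M_N(\CH)$ and composing with $\frac{\nu_3}{12}\int_M\tr(\,\cdot\,)$ reproduces exactly the formula for $\tr\otimes\phi_3$ recorded above; that is, $\tr\otimes\phi_3 = \frac{\nu_3}{12}\int_M\tr\circ\,\rho$ on degree-$3$ forms. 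Because $\rho$ is a DGA morphism, $\rho(AdA) = \rho(A)\wedge d\rho(A)$ and $\rho(A^3) = \rho(A)\wedge\rho(A)\wedge\rho(A)$, where $\rho(A)$ is the matrix-valued de Rham $1$-form represented by $A$. Hence
\[
(\tr\otimes\phi_3)\!\left(AdA + \tfrac{2}{3}A^3\right) = \frac{\nu_3}{12}\int_M\tr\!\left(A\wedge dA + \tfrac{2}{3}A\wedge A\wedge A\right).
\]

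Finally I would substitute $\nu_3 = (2\pi^2)^{-1}$ and collect constants, $6\pi k\cdot\frac{\nu_3}{12} = \frac{6\pi k}{24\pi^2} = \frac{k}{4\pi}$, which yields precisely the classical Chern-Simons action of \eqref{eq_1.1}.

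I expect the main obstacle to be the justification of the DGA-morphism step. One must check that $\rho$ is genuinely multiplicative, so that the abstract products $AdA$ and $A^3$ really correspond to the wedge products, and that the matrix bookkeeping is correctly intertwined: the scalar factors wedge while the matrix entries multiply under the trace. This is exactly why, although $\A = C^\infty(M)$ is commutative, the noncommutativity of $M_N(\A)$ must be respected throughout the computation. The two remaining ingredients---the vanishing of $\phi_1$ and the precise value of $\nu_3$---have already been established in the preceding discussion, so no further analysis is required there.
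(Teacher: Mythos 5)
Your proposal is correct and follows essentially the same route as the paper: the corollary is obtained there by feeding $\tr\otimes\phi_1=0$, the formula $\tr\otimes\phi_3=\frac{\nu_3}{12}\int_M\tr(\,\cdot\,)$ with $\nu_3=(2\pi^2)^{-1}$, and the identification of universal forms with de Rham forms into Definition \ref{CS}, collecting $6\pi k\cdot\frac{\nu_3}{12}=\frac{k}{4\pi}$. Your explicit justification of the DGA-morphism step is the point the paper only treats implicitly (via the remark on Hochschild--Kostant--Rosenberg after the corollary), and it is a welcome clarification rather than a deviation.
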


\begin{rem}
 One must take care of the sloppy notation in the the corollary. Formally, the hermitian 1-form $ A \in M_{N} \left( \Omega^1 \left( C^{\infty}(M) \right) \right) $ on the left hand side of the equation is a 1-form defined in the noncommutative framework. But due to the Hochschild-Kostant-Rosenberg theorem (see \cite{voigt}) 1-forms in $\Omega^1 \left( C^{\infty}(M) \right)$ do not coincide with ordinary differential $1$-forms on $M$. Therefore, an additional clue of the identity in the corollary above is the different nature of the $3$-forms on both sides of it. We have an element of $ M_{N} \left( \Omega^3 \left( C^{\infty}(M) \right)  \right) $ as input in the cyclic cocycle $\phi_3$ on the left hand side. Instead, the integrand on the right hand side is an ordinary, matrix valued, differential $3$-form. Hence the use of the same symbol $A$ for the 1-forms on both sides of the equation in the corollary above is not absolutely correct.
\end{rem}

\part{Chern-Simons Theory for the quantum group $SU_q(2)$}

In the second part of the paper we apply the general theory of the first one to the quantum group $SU_q(2)$ and its $3$-dimensional spectral triples constructed in \cite{pal} and \cite{Landi_1}.

\section{The Quantum group $SU_q(2)$}

For the overview of the spectral triple constructed in \cite{pal} we follow \cite{connes_sphere}. Let $q$ be a real number $0 \leq q<1$. We start with the presentation of the algebra of coordinates on the quantum group $SU_q (2)$ in the form 
\begin{equation*}
\a^* \a + \b^* \b = 1 \, , \ \a \a^* + q^2 \b \b^* = 1 \, , \ 
\a \b = q \b \a \, , \ \a \b^* = q \b^* \a \, , \ \b \b^* = 
\b^* \b \, .
\end{equation*}
Let us recall the notations for the standard representation of this algebra. One lets $\H$ be the Hilbert space with orthonormal basis $e_{ij}^{(n)}$ where $n \in \frac{1}{2} \, \NH$ varies among half-integers while $i,j \in \{ -n , -n+1 , \ldots , n \}$. Thus the first elements are,
\begin{equation*}
e_{00}^{(0)} \, , \ e_{ij}^{(1/2)} \, , \ i,j \in \left\{ -  \frac{1}{2} , \frac{1}{2} \right\} , \ldots
\end{equation*}
The following formulas define a unitary representation on $\H$,
\begin{equation*}
\a e_{ij}^{(n)} = a_+ (n,i,j) \, e_{i-\frac{1}{2} , j - \frac{1}{2}}^{\left( n + \frac{1}{2} \right)} + a_- (n,i,j) 
\, e_{i-\frac{1}{2} , j - \frac{1}{2}}^{\left( n - \frac{1}{2} \right)}
\end{equation*}
\begin{equation*}
\b e_{ij}^{(n)} = b_+ (n,i,j) \, e_{i+\frac{1}{2} , j - \frac{1}{2}}^{\left( n + \frac{1}{2} \right)} + b_- (n,i,j) 
\, e_{i+\frac{1}{2} , j - \frac{1}{2}}^{\left( n - \frac{1}{2} \right)}
\end{equation*}
where the explicit form of $a_{\pm}$ and $b_{\pm}$ is
\begin{equation*}
a_+ (n,i,j) = q^{2n+i+j+1} \, \frac{(1-q^{2n-2j+2})^{1/2} (1-q^{2n-2i+2})^{1/2}}{ (1-q^{4n+2})^{1/2} (1-q^{4n+4})^{1/2}}
\end{equation*}
\begin{equation} \label{eq_3.1}
a_- (n,i,j) = \frac{(1-q^{2n+2j})^{1/2} (1-q^{2n+2i})^{1/2}}{(1-q^{4n})^{1/2} (1-q^{4n+2})^{1/2}}
\end{equation}
and
\begin{equation*}
b_+ (n,i,j) = -q^{n+j} \, \frac{(1-q^{2n-2j+2})^{1/2} (1-q^{2n+2i+2})^{1/2}}{(1-q^{4n+2})^{1/2} (1-q^{4n+4})^{1/2}}
\end{equation*}
\begin{equation} \label{eq_3.2}
b_- (n,i,j) = q^{n+i} \, \frac{(1-q^{2n+2j})^{1/2} (1-q^{2n-2i})^{1/2}}{(1-q^{4n})^{1/2} (1-q^{4n+2})^{1/2}} \, .
\end{equation}
Note that $a_-$ does vanish if $i=-n$ or $j=-n$, which gives meaning to $a_- (n,i,j) \, e_{i-\frac{1}{2} , j-\frac{1}{2}}^{\left( n-\frac{1}{2}\right)}$ for these values while $i-\frac{1}{2} \notin \left[ - \left( n-\frac{1}{2} \right) , n - \frac{1}{2} \right]$ or $j - \frac{1}{2} \notin \left[ - \left( n-\frac{1}{2} \right) , n - \frac{1}{2} \right]$. Similarly $b_-$ vanishes for $j=-n$ or $i=n$.\\
Let now, as in \cite{pal}, $\D$ be the diagonal operator on $\H$ given by,
\begin{equation*}
\D (e_{ij}^{(n)}) = (2 \, \d_0 (n-i)-1) \, 2n \,\, e_{ij}^{(n)}
\end{equation*}
where $\d_0 (k) = 0$ if $k \ne 0$ and $\d_0 (0) = 1$. It follows from \cite{pal} that the triple $(C^\infty(SU_q(2)) , \H , \D)$ is a spectral triple, where $C^\infty(SU_q(2))$ is the pre-$C^\ast$-algebra generated by the elements $\a$ and $\b$. Due to Theorem 3.2 and Theorem 7.1 in \cite{connes_sphere} we obtain:

\begin{thm}
The spectral dimension of the triple $( C^\infty(SU_q(2)), \H, \D)$ is simple and equal to $\Sigma = \set{1,2,3}$.
\end{thm}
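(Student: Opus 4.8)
The plan is to reduce the claim to an explicit determination of the poles of the spectral zeta functions $\tr(b \AD^{-s})$ for $b \in \B$, where $s$ denotes the exponent (so that the functions $\zeta_b$ of the text correspond to $s = 2z$, and $\Sigma$ is read off in the variable $s$). The base case $b = 1$ is immediate from the definition of $\D$: one has $\AD\, e_{ij}^{(n)} = 2n\, e_{ij}^{(n)}$ for every $n \in \frac{1}{2}\NH$ and all admissible $i,j$, while $e_{00}^{(0)}$ spans $\ker \D$. For fixed $n > 0$ the indices satisfy $i,j \in \{-n, \dots, n\}$, so the eigenvalue $2n$ has multiplicity $(2n+1)^2$. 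Writing $m = 2n \in \NH$, this yields
\begin{equation*}
\tr(\AD^{-s}) = \sum_{m \geq 1} (m+1)^2\, m^{-s} = \zeta_{\mathrm R}(s-2) + 2\,\zeta_{\mathrm R}(s-1) + \zeta_{\mathrm R}(s),
\end{equation*}
with $\zeta_{\mathrm R}$ the Riemann zeta function. Since $\zeta_{\mathrm R}$ has a single simple pole, at argument $1$, the function $\tr(\AD^{-s})$ continues meromorphically with simple poles exactly at $s \in \set{1,2,3}$. This already exhibits $\set{1,2,3}$ as the candidate dimension spectrum, shows that the top dimension is $3$, and establishes simplicity in the case $b = 1$.

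The substance of the theorem is to prove the same pole structure for an arbitrary $b \in \B$, i.e. for an arbitrary product of operators $\delta^{k}(\pi(a))$ and $\delta^{k}(\lcom \D, \pi(a) \rcom)$ with $a \in C^\infty(SU_q(2))$. Here I would follow \cite{connes_sphere}. The strategy is to build a symbol calculus adapted to the standard representation: using the explicit coefficients $a_\pm, b_\pm$ in \eqref{eq_3.1}--\eqref{eq_3.2} one analyses the action of $\pi(\a), \pi(\b)$ and their adjoints on the basis $e_{ij}^{(n)}$, and shows that every generator of $\B$ agrees, modulo a remainder lying in $OP^{-3-\epsilon}$ for some $\epsilon > 0$ (hence trace class, contributing no pole, exactly as in the proof of Lemma \ref{trace_property}), with an operator that is diagonal in $n$ and whose coefficients are polynomials in $n$ times rational functions of $q^{2n}$. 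The decisive point is that the genuinely off-diagonal contributions, together with the corrections produced by $\delta$ and by the noncommutativity of $\a, \b$, all carry strictly positive powers of $q^{2n}$; since $0 \leq q < 1$, these decay geometrically in $n$ and are therefore negligible for the analysis of the singularities.

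Granting this reduction, $\tr(b \AD^{-s})$ becomes, up to a holomorphic remainder, a finite sum of expressions $\sum_{n > 0} P(n)\, (2n)^{-s}$ with $P$ a polynomial of degree at most $2$, the degree being controlled by the growth $(2n+1)^2$ of the multiplicities together with the order of $b$. Each such sum is again a finite combination of shifted Riemann zeta functions, so it extends meromorphically with only simple poles, all contained in $\set{1,2,3}$. This gives at once $\Sigma \subseteq \set{1,2,3}$ with all poles simple, and, combined with the computation for $b = 1$, the equality $\Sigma = \set{1,2,3}$. The distinction between $q = 0$ and $0 < q < 1$ enters only through the precise coefficient functions and the remainder estimates, not through the final pole set; this is precisely what Theorem 3.2 and Theorem 7.1 of \cite{connes_sphere} supply, and on which we rely.

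I expect the main obstacle to be the second step: setting up the symbol-calculus reduction and verifying rigorously that every correction term generated by repeated application of $\delta$ and by the relations defining $SU_q(2)$ either lands in a trace-class ideal or shifts the polynomial degree only in a controlled way, so that no singularity outside $\set{1,2,3}$ and no pole of order $\geq 2$ can appear. Once this structural control over $\B$ is secured, the meromorphic continuation and the simplicity of the poles follow from the elementary analytic properties of $\zeta_{\mathrm R}$.
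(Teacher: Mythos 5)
Your proposal is correct and follows essentially the same route as the paper: the paper gives no independent proof of this statement but simply imports it from Theorems 3.2 and 7.1 of \cite{connes_sphere}, which is exactly where you defer the substantive step (the structural control of $\tr(b\AD^{-s})$ for general $b\in\B$ via the symbol calculus modulo smoothing operators). Your explicit computation for $b=1$, giving $\zeta_{\mathrm R}(s-2)+2\zeta_{\mathrm R}(s-1)+\zeta_{\mathrm R}(s)$ with simple poles exactly at $s\in\set{1,2,3}$, is correct and is a useful addition that the paper does not spell out.
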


In the sequel we have to treat the case $q = 0$ separately from the general case, $ 0 < q < 1$.

\medskip

\subsection{Local index formula for $SU_0(2)$}

For $q = 0$ the representations of $\a$ and $\b$ adopt a comparatively simple form.
\begin{eqnarray*}
&&a_+ (n,i,j) = 0 \\
&&a_- (n,i,j) = 0 \ {\rm if} \ i=-n \ {\rm or} \ j = -n \\
&&a_- (n,i,j) = 1 \ {\rm if} \ i \ne -n \ {\rm and} \ j \ne -n 
\end{eqnarray*}
\begin{eqnarray} \label{eq_3.3}
&&b_+ (n,i,j) = 0 \ {\rm if} \ j \ne -n  \\
&&b_+ (n,i,j) = -1 \ {\rm if} \ j = -n \nn\\
&&b_- (n,i,j) = 0 \ {\rm if} \ i\ne -n \ {\rm or} \ j = -n  \nn \\
&&b_- (n,i,j) = 1 \ {\rm if} \ i=-n , j \ne -n  \nn \, . 
\end{eqnarray}
Thus for $q=0$ the operators $\a$ and $\b$ on $\H$ are given by
\begin{equation*}
\a e_{ij}^{(n)} = e_{i - \frac{1}{2} , j - \frac{1}{2}}^{\left( n - \frac{1}{2} \right)} \qquad \hbox{if} \quad i > -n , j > -n
\end{equation*}
and $\a e_{ij}^{(n)} = 0$ if $i = -n$ or $j=-n$.
\begin{equation*}
\b e_{ij}^{(n)} = 0 \qquad \hbox{if} \quad i \ne -n \quad \hbox{and} \quad j \ne -n
\end{equation*}
\begin{equation*}
\b e_{-n,j}^{(n)} = e_{-\left( n - \frac{1}{2} \right)  , j - \frac{1}{2}}^{\left( n - \frac{1}{2} \right)} \qquad \hbox{if} \quad j 
\ne -n
\end{equation*}
and
\begin{equation*}
\b e_{i,-n}^{(n)} = - e_{i+ \frac{1}{2} , - \left( n+ \frac{1}{2} 
\right)}^{\left( n + \frac{1}{2} \right)} \, .
\end{equation*}
By construction $\b \b^* = \b^* \b$ is the projection $e$ on the subset $\{ i = -n$ or $j = -n\}$ of the basis. Moreover, $\a$ is a partial isometry with initial support $1-e$ and final support $1 = \a \a^*$. The basic relations between $\a$ and $\b$ are,
\begin{equation*}
\a^* \a + \b^* \b = 1 \, , \ \a \a^* = 1 \, , \ \a \b = \a \b^* = 0 \, , 
\ \b \b^* = \b^* \b \, .
\end{equation*}
For $f \in C^{\infty} (S^1)$, $f = \sum \wh f_n \, e^{in \t}$, we let
\begin{equation*}
f(\b) = \sum_{n > 0} \wh f_n \, \b^n + \sum_{n < 0} \wh f_n \, \b^{*(-n)} + \wh f_0 \, e \, ,
\end{equation*}
and the map $f \ra f(\b)$ gives a (degenerate) representation of $C^{\infty} (S^1)$ on $\H$. Let $C^\infty(SU_0(2))$ denote the linear span of the elements
\begin{equation*}
a = \sum_{k,\ell \geq 0} \a^{*k} f_{k\ell} (\b) \, \a^{\ell} + \sum_{\ell \geq 0} \l_{\ell} \, \a^{\ell} + \sum_{k > 0} \l'_k \, 
\a^{*k}
\end{equation*}
where $\l$ and $\l'$ are sequences (of complex numbers) of rapid decay and $(f_{k\ell})$ is a sequence of rapid decay with values in $C^{\infty} (S^1)$. Due to proposition 3.1 of \cite{connes_sphere} $C^\infty(SU_0(2))$ is a $\ast$-subalgebra of $\B(\H)$ closed under holomorphic calculus. We define a linear map $\sg$ from $C^\infty(SU_0(2))$ to $C^\infty(S^1)$ by
\begin{equation} \label{eq_3.4}
\sg (a) = \sum_{\ell \geq 0} \l_{\ell} \, u^{\ell} + \sum_{k > 0} \l'_k \, u^{-k} \, ,
\end{equation}
where $u = e^{i\t}$ denotes the unitary generator of $C^\infty(S^1)$. In the proof of proposition 3.1 in \cite{connes_sphere} it is actually shown that $\sg$ is a $\ast$-homomorphism, which becomes quite important for the description of the cyclic cocycle $( \phi_3, \phi_1 )$ for $SU_0(2)$. We start this description with the following formulas defining a cyclic cocycle $\tau_1$ on $C^\infty(SU_0(2))$,
\begin{equation*}
\tau_1 (\a^{*k}, x)=\tau_1 (x,\a^{*k})= \tau_1 (\a^{l}, x)=\tau_1 (x,\a^{l})=0,
\end{equation*}
for all integers $k$, $l$ and any $x \in C^\infty(SU_0(2))$,
\begin{equation*}
\tau_1 (\a^{*k} f(\b) \a^{\ell} , \a^{*k'}  g(\b) \a^{\ell'}) = 0
\end{equation*}
unless  $\ell'=k$, $k'=\ell$ and
\begin{equation*}
\tau_1 (\a^{*k} f(\b)\a^{\ell} , \a^{*\ell} g(\b) \a^k) = \frac{1}{\pi i} \int_{S^1} f \,\, {\rm d} g \,\, .
\end{equation*}
Let $\vp_0$ be the 0-cochain given by $\vp_0 (\a^{*k} f(\b) \a^{\ell}) = 0 \,\, \hbox{unless} \,\, k = \ell $ and,
\begin{equation*}
\vp_0 (\a^{*k} f(\b) \a^k) =  \rho (k)\, \, \frac{1}{2\pi}  \int_{0}^{2 \pi} f \, \,{\rm d} \t ,
\end{equation*}
where $\rho (j) =\frac{2}{3} - j - j^2$. Finally, let $\vp_2$ be the 2-cochain given by the pull back by $\sg$ of the 
cochain  
\begin{equation*} 
\rho(f_0,f_1,f_2) = \frac{-1}{24}\frac{1}{2\pi i} \, \int_0^{2 \pi} f_0 f'_1 f''_2 \, {\rm d} \t
\end{equation*}
on  $C^{\infty} (S^1)$. Using these definitions, theorem 4.1 of \cite{connes_sphere} provides us with following result.

\begin{thm} \label{thm_3.2}
The local index formula \ref{local_index} of the spectral triple $(C^\infty(SU_0(2)) , \H , \D)$ is given by the cyclic cocycle $\tau_1$ up to the coboundary of the cochain $(\vp_0,\vp_2)$. The precise equations are
\begin{equation*}
\vp_1= \,\tau_1 + b \vp_0 + B \vp_2 \,, \qquad \vp_3= \, b \vp_2 \,.
\end{equation*}
\end{thm}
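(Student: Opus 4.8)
The plan is to obtain the two identities by evaluating the residue cochains $\vp_1$ and $\vp_3$ produced by the local index formula of Theorem \ref{local_index} on the triple $(C^\infty(SU_0(2)),\H,\D)$, and then matching the outcome against Connes' explicit computation (theorem 4.1 of \cite{connes_sphere}). The computation is made tractable by the degeneration at $q=0$: by \eqref{eq_3.3} the generator $\a$ acts as a pure shift $e_{ij}^{(n)}\mapsto e_{i-\frac12,j-\frac12}^{(n-1/2)}$ off the boundary set $\{i=-n\text{ or }j=-n\}$, while $\b$ is supported on that boundary. Since $\D(e_{ij}^{(n)})=(2\d_0(n-i)-1)2n\,e_{ij}^{(n)}$ is diagonal, with eigenvalue $+2n$ on the row $i=n$ and $-2n$ elsewhere, every commutator $\lcom\D,\pi(a)\rcom$ is again a weighted shift and each operator $|\D|^{-s}$ is diagonal. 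Hence every residue $\tau_k(\,\cdot\,|\D|^{y})$ entering $\vp_1$ and $\vp_3$ reduces to the residue at $z=0$ of a zeta-type sum over the half-integer $n$, weighted by the multiplicities of the eigenspaces.

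Carrying out these sums, I would first treat $\vp_3$. It only involves $\tau_0$ and $\tau_1$ applied to a triple product of commutators, and the surviving contribution is governed by the symbol $\sg(a)\in C^\infty(S^1)$ of \eqref{eq_3.4}; the sum collapses to an integral over $S^1$ and produces exactly the coboundary $b\vp_2$ of the $2$-cochain $\vp_2=\sg^{*}\rho$, where $\rho(f_0,f_1,f_2)=\frac{-1}{24}\frac{1}{2\pi i}\int_0^{2\pi}f_0f_1'f_2''\,d\t$. That this factors through $\sg$ is consistent with $\sg$ being a $\ast$-homomorphism, so that $b\sg^{*}\rho=\sg^{*}(b\rho)$. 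For $\vp_1$ the six residue terms of Theorem \ref{local_index} have to be assembled; the leading part reproduces the cyclic $1$-cocycle $\tau_1$ built from the interior data $f(\b)$ via $\frac{1}{\pi i}\int_{S^1}f\,dg$, while the remaining pieces organize themselves into $b\vp_0+B\vp_2$, with the $0$-cochain $\vp_0$ carrying the weight $\rho(k)=\frac23-k-k^2$.

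The last step is purely algebraic: having expressed $\vp_1$ and $\vp_3$ through $\tau_1,\vp_0,\vp_2$, I would verify the identities $\vp_1=\tau_1+b\vp_0+B\vp_2$ and $\vp_3=b\vp_2$ directly, using the definition \eqref{B_operator} of $B$, the formula for $b$, and the fact that $(\vp_3,\vp_1)$ is a cyclic cocycle in the $(B,b)$-complex (Theorem \ref{local_index}). The cocycle property is what guarantees that the scattered residue contributions genuinely form a coboundary rather than merely being cohomologous to one.

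I expect the main obstacle to be the residue computation itself, specifically the bookkeeping of the subleading poles. Whereas $\vp_3$ needs only $\tau_0$ and $\tau_1$, the formula for $\vp_1$ mixes $\tau_0$, $\tau_1$ and $\tau_2$ applied to $\lcom\D,\pi(a^1)\rcom$ and its first and second $\nabla$-derivatives, and pinning down the precise constants $\frac23-k-k^2$ and $\frac{-1}{24}$ demands a careful Euler--Maclaurin / analytic-continuation analysis of the eigenvalue sums rather than a leading-order estimate. This is exactly the technical heart of \cite{connes_sphere}, which is why the efficient route is to import theorem 4.1 there and restrict the present work to recasting its output into the coboundary normalization $(\vp_0,\vp_2)$ displayed in the statement.
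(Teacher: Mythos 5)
Your proposal is correct and ends up taking essentially the same approach as the paper: the paper gives no independent proof of Theorem \ref{thm_3.2} but simply imports theorem 4.1 of \cite{connes_sphere} after setting up the cochains $\tau_1$, $\vp_0$, $\vp_2$, which is precisely the route you settle on after sketching the residue computation. Your outline of how the $q=0$ degeneration (diagonal $\D$, shift operators $\a$, $\b$ supported on the boundary set) reduces the residues to zeta-type sums governed by the symbol $\sg$ is a fair description of what the cited computation involves, but since neither you nor the paper carries it out, the statement rests on the reference in both cases.
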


\medskip

\subsection{Local index formula for $SU_q(2), \, 0 < q <1 $}

Let $\Br_q$ be the algebra generated by the elements $\d^k(a)$ for $a \in C^\infty(SU_q(2))$, with the derivation $\d$ given by the commutator bracket $\d(\cdot) = \lcom \AD, \cdot \rcom$. By construction, the generators $\a$ and $\b$ of $C^\infty(SU_q(2))$ are of the form, 
\begin{equation*}
\a = \a_+ + \a_- \, , \qquad \b = \b_+ + \b_-
\end{equation*}
where,
\begin{equation*} 
\d (\a_{\pm}) = \pm \a_{\pm} \, , \qquad \d (\b_{\pm}) = \pm \b_{\pm} \, .
\end{equation*}
The explicit form of $\a_{\pm}$, $\b_{\pm}$ is, using $\frac{n}{2}$ instead of $n$ for the notation of the half-integer,
\begin{equation*}
\a_{\pm} (e_{(i,j)}^{(n/2)}) = a_{\pm} \left( n/2 , i , j \right) \, e_{\left( i - \frac{1}{2} , j - \frac{1}{2} \right)}^{\left( \frac{n \pm 1}{2} \right)}
\end{equation*}
\begin{equation*}
\b_{\pm} (e_{(i,j)}^{(n/2)}) = b_{\pm} \left( n/2 , i , j \right) \, e_{\left( i + \frac{1}{2} , j - \frac{1}{2} \right)}^{\left( \frac{n \pm 1}{2} \right)}
\end{equation*}
where $a_{\pm}$, $b_{\pm}$ are as in \eqref{eq_3.1} and \eqref{eq_3.2} above. Thus the algebra $\Br_q$ is generated by the operators $\a_{\pm}$, $\b_{\pm}$ and their adjoints. Since we want to compute \textit{local formulas} of operators in $\Br_q$ we are entitled to mod out smoothing operators
\begin{equation*}
OP^{-\infty} = \bigcap_{k > 0} OP^{-k}
\end{equation*}
where the operator spaces $OP^{-k}$ are given by equation \eqref{pseudo} in the discussion of pseudo differential calculus in the second section. This simplification leads us to the cosphere bundle $C^\infty(S^\ast_q)$.\\
We introduce the following representations $\pi_{\pm}$ of $C^\infty(SU_q(2))$ . In both cases the Hilbert spaces are $\H_{\pm} = \ell^2 (\NH)$ with basis $(\e_x)_{x \in \NH}$ and the representations are given by,
\begin{equation} \label{pi_pl}
\pi_{\pm} (\a) \, \e_x = (1-q^{2x})^{1/2} \, \e_{x-1} \qquad \text{for all } x \in \NH
\end{equation}
\begin{equation*}
\pi_{\pm} (\b) \, \e_x = \pm \,\,q^x \, \e_x \qquad  \text{for all }x \in \NH \, .
\end{equation*}
Additionally, one defines a $\ZH$-grading on $\Br_q$ by means of the one-parameter group of automorphisms $\gamma_t$ given by
\begin{equation*} 
\gamma_t(P) = e^{i t \AD} P e^{-it \AD} \quad \text{for } P \in \Br_q
\end{equation*}
For the corresponding $\ZH$-grading one has
\begin{equation} \label{grading}
\deg(\a_{\pm}) = \pm 1  \quad \deg(\b_{\pm}) = \pm 1 
\end{equation}
From the definition of the representations $\pi_\pm$ one obtains $b - b^\ast \in \ker \pi_\pm$. Hence, the representations $\pi_\pm$ are not faithful. Let denote by 
\begin{equation*}
C^\infty(D^2_{q\pm}) = C^\infty(SU_q(2)) / \ker(\pi_{\pm})
\end{equation*} 
the quotient algebras and $r_\pm$ the restriction morphisms. We are prepared to restate proposition 6.1 of \cite{connes_sphere}

\begin{prop} \label{prop_3.3}
The following formulas define an algebra homomorphism $\rho$ from $\Br_q$ to
\begin{equation*}
C^{\infty} (D_{q+}^2) \otimes C^{\infty} (D_{q-}^2) \otimes C^{\infty} (S^1) \, ,
\end{equation*}
\begin{equation*}
\rho (\a_+) = -q \b^* \otimes \b \otimes u \, , \quad \rho (\a_-) = \a \otimes \a \otimes u^* \, ,
\end{equation*}
\begin{equation*}
\rho (\b_+) = \a^* \otimes \b \otimes u \, , \quad \rho (\b_-) = \b \otimes \a \otimes u^* \, ,
\end{equation*}
where we omitted $r_+ \otimes r_-$.
\end{prop}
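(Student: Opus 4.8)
The statement to prove is Proposition \ref{prop_3.3}: that the assignment
\[
\rho(\a_+) = -q\b^\ast \ot \b \ot u, \quad \rho(\a_-) = \a \ot \a \ot u^\ast, \quad \rho(\b_+) = \a^\ast \ot \b \ot u, \quad \rho(\b_-) = \b \ot \a \ot u^\ast
\]
extends to an algebra homomorphism $\rho : \Br_q \ra C^\infty(D^2_{q+}) \ot C^\infty(D^2_{q-}) \ot C^\infty(S^1)$. Since $\Br_q$ is generated (as an algebra) by the four elements $\a_\pm, \b_\pm$ and their adjoints, the plan is purely to verify that the proposed images satisfy \emph{all} the defining relations among these generators, so that $\rho$ is well defined and multiplicative on the whole algebra. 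The unitary generator $u$ of $C^\infty(S^1)$ is central and does the bookkeeping of the $\ZH$-grading \eqref{grading}: because $\deg(\a_\pm) = \deg(\b_\pm) = \pm 1$, every degree-$+1$ piece carries one factor $u$ and every degree-$-1$ piece one factor $u^\ast$, so $\rho$ is automatically grading-preserving, and this is the structural reason to expect it to respect the relations.

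\emph{First step.} I would read off the relations satisfied by the homogeneous components $\a_\pm, \b_\pm$ from the original $SU_q(2)$ relations $\a^\ast\a + \b^\ast\b = 1$, $\a\a^\ast + q^2\b\b^\ast = 1$, $\a\b = q\b\a$, $\a\b^\ast = q\b^\ast\a$, $\b\b^\ast = \b^\ast\b$. Substituting $\a = \a_+ + \a_-$ and $\b = \b_+ + \b_-$ and sorting by $\d$-degree (using $\d(\a_\pm) = \pm\a_\pm$, $\d(\b_\pm) = \pm\b_\pm$, and that $\d$ is a derivation), each original relation splits into several homogeneous relations among the $\a_\pm, \b_\pm$; these are the relations I must then check are preserved by $\rho$. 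This step is essentially algebraic sorting and is where the concrete list of relations to verify gets produced.

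\emph{Second step.} With the relations in hand, I would substitute the proposed images tensor-leg by tensor-leg and check each relation, using in the first and second legs the defining relations of $C^\infty(D^2_{q\pm})$ (these are the quotients $C^\infty(SU_q(2))/\ker\pi_\pm$, where from $\pi_\pm(\b) = \pm q^x$ being self-adjoint one gets $\b = \b^\ast$ and $\a\a^\ast = 1$, i.e. the relations of the quantum disc $D^2_q$). The central factor $u, u^\ast$ multiplies trivially (with $uu^\ast = u^\ast u = 1$), so each verification reduces to an identity in $C^\infty(D^2_{q+}) \ot C^\infty(D^2_{q-})$; the powers of $q$ in the scalar coefficient $-q$ attached to $\rho(\a_+)$ are exactly what is needed to match the $q$-commutation relations. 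Since $C^\infty(SU_q(2))$ is a $\ast$-algebra, it suffices to check the relations involving the generators; the ones for the adjoints then follow by applying $\ast$, noting that $\rho$ is compatible with the involutions on each leg.

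\emph{The main obstacle.} The genuine difficulty is not any single computation but the combinatorics of the first step: correctly decomposing the five $SU_q(2)$ relations into their homogeneous pieces and being sure the resulting list of relations among $\a_\pm, \b_\pm$ is both complete and consistent, so that verifying them on the images actually certifies a well-defined homomorphism rather than just a map respecting a subset of relations. Keeping track of the exact powers of $q$ and the signs (e.g. the $-q$ in $\rho(\a_+)$ and the $\pm$ in $\rho(\b_\pm)$) through the tensor legs is the error-prone part; everything else is a routine substitution. I would lean on \cite{connes_sphere}, proposition 6.1, for the precise form of the relations and the structure of the quantum-disc quotients $C^\infty(D^2_{q\pm})$.
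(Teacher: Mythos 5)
The paper itself gives no proof of this proposition: it is quoted verbatim from proposition 6.1 of \cite{connes_sphere} (the text says ``we are prepared to restate proposition 6.1''), so your attempt has to be measured against the argument given there. Measured that way, your plan has a genuine gap. The algebra $\Br_q$ is not an abstractly presented algebra on the symbols $\a_\pm,\b_\pm$; it is the concrete algebra of operators on $\H$ whose generators have the explicit matrix coefficients $a_\pm(n,i,j)$, $b_\pm(n,i,j)$ of \eqref{eq_3.1} and \eqref{eq_3.2}. Checking that the proposed images satisfy the homogeneous components of the five $SU_q(2)$ relations (together with the quantum-disc relations in the first two legs) only shows that $\rho$ respects \emph{some} relations holding in $\Br_q$. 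To conclude that the formulas ``define'' a homomorphism you would need these to be a complete set of relations for $\Br_q$, and no such presentation is available --- the concrete operators satisfy many further identities tied to the fine structure of their spectral decomposition. You correctly single this out as ``the main obstacle,'' but the route you propose offers no way past it, and producing the missing presentation would require essentially the same analytic work as the actual proof.

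The argument in \cite{connes_sphere} is analytic rather than combinatorial, which is why the paper introduces the ideal $OP^{-\infty}$ of smoothing operators immediately before the statement. Reparametrizing the basis $e^{(n)}_{ij}$ by $x=n+i$, $y=n+j$ and the half-integer $n$, one reads off from \eqref{eq_3.1} and \eqref{eq_3.2} that each generator equals the explicit tensor-product operator prescribed by $\rho$ up to an error whose matrix coefficients decay geometrically in $n$, hence up to an element of $OP^{-\infty}$; for instance
\begin{equation*}
a_-(n,i,j)=(1-q^{2x})^{1/2}(1-q^{2y})^{1/2}\bigl(1+O(q^{4n})\bigr),
\end{equation*}
which is exactly the action of $\a\ot\a$ on $\e_x\ot\e_y$ in the representations \eqref{pi_pl}, while the shift $2n\mapsto 2n-1$ accounts for the factor $u^\ast$; the prefactor $q^{x+y+1}$ in $a_+$ similarly matches $-q\,\b^\ast\ot\b$. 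Since $OP^{-\infty}$ is a two-sided ideal, the map sending an operator to its class modulo smoothing operators, identified with its symbol in $C^{\infty}(D_{q+}^2)\ot C^{\infty}(D_{q-}^2)\ot C^{\infty}(S^1)$, is automatically multiplicative, and no completeness-of-relations question ever arises. Your grading observation and your description of $C^\infty(D^2_{q\pm})$ are correct as consistency checks, but they do not substitute for this identification of the generators modulo $OP^{-\infty}$, which is the real content of the proposition.
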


\begin{defn}
Let $C^{\infty} (S_{q}^*)$ be the range of $\rho$ in $C^{\infty} (D_{q+}^2) \ot C^{\infty} (D_{q-}^2) \ot C^{\infty} (S^1)$.
\end{defn}

There is a symbol map $\sg:C^\infty(D^2_{q\pm}) \ra C^\infty(S^1) $ which is given as a $\ast$-homomorphism on the generators of $C^\infty(D^2_{q\pm})$ by 
\begin{equation} \label{eq_3.5}
\sg(r_\pm(\a)) = u \, , \qquad \sg(r_\pm(\b)) = 0
\end{equation}
where $u$ is the unitary generator of $C^\infty(S^1)$. The representations $\pi_\pm$ in $\ell^2(\NH)$, with basis $\set{\e_x: x \in \NH}$, induce representations on $C^\infty(D^2_{q\pm})$ which we also denote with $\pi_\pm$. We define two linear functionals $\tau_1$ and $\tau_0$ on $C^{\infty}(D_{q \pm}^2)$ by 
\begin{equation} \label{tau_1}
\tau_1 (a) = \frac{1}{2 \pi} \int_0^{2\pi} \sg (a) \, d \t \qquad \fl a \in C^{\infty} (D_q^2)
\end{equation}
and
\begin{equation} \label{tau_0}
\tau_0 (a) = \lim_{N \ra \infty} \tr_N (\pi(a)) - \tau_1 (a) \, N \, ,
\end{equation}
where
\begin{equation*}
\tr_N (a) = \sum_0^N \langle a \, \e_x , \e_x \rangle \, .
\end{equation*}
$($where we omitted $\pm$ in the above formulas$)$.

\begin{thm} \label{integrals_su_q_2}
Let $b \in \Br_q$, $\rho (b) \in C^{\infty} (S_q^*)$ its symbol. Then let $\rho (b)^0$ be the component of degree $0$, with the grading on $C^{\infty} (S_q^*)$ induced by the one on $\Br_q$. Then one has
\[
\ncint  b \, \AD^{-3} = (\tau_1 \otimes \tau_1)(r\rho (b)^0)
 \]
\[
\ncint b \, \AD^{-2} = (\tau_1 \otimes \tau_0 + \tau_0 \otimes \tau_1)(r\rho (b)^0)
 \]
\[
\ncint b \, \AD^{-1} = (\tau_0 \otimes \tau_0)(r\rho (b)^0)\, ,
 \]
 with the natural restriction homomorphism $r: C^{\infty} (S_q^*) \ra C^{\infty} (D_{q+}^2) \otimes C^{\infty} (D_{q-}^2)$ and the Wodzicki residue
\begin{equation*}
\ncint P = \res_{z=0} \tr \left(P \AD^{-z} \right)
\end{equation*}
\end{thm}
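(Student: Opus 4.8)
The plan is to reduce the computation to the degree-$0$ part of $b$ and then to read off the three residues from an elementary zeta-function expansion dictated by the definitions \eqref{tau_1} and \eqref{tau_0}. \emph{First (reduction to degree $0$)} I would exploit the one-parameter group $\gamma_t(P) = e^{it\AD}Pe^{-it\AD}$ defining the $\ZH$-grading \eqref{grading}. Since $\AD^{-w}$ commutes with $e^{\pm it\AD}$, cyclicity of the trace gives $\tr(\gamma_t(P)\AD^{-w}) = \tr(P\AD^{-w})$ for $\Re w$ large, hence, after meromorphic continuation, $\ncint \gamma_t(P)\AD^{-j} = \ncint P\AD^{-j}$. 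If $P$ is homogeneous of degree $k$ then $\gamma_t(P) = e^{ikt}P$, so the residue vanishes unless $k=0$. Therefore only the degree-$0$ component $b^0$ of $b$ contributes to each of the three noncommutative integrals, and since $\rho$ intertwines the gradings, $b^0$ is governed by $\rho(b)^0$. Throughout I use $\ncint b\AD^{-j} = \res_{w=j}\tr(b\AD^{-w})$, which follows from $b\AD^{-j}\AD^{-z}=b\AD^{-j-z}$ and the substitution $w=j+z$.

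\emph{Second (factorization through the two discs)} I would invoke Proposition \ref{prop_3.3}: modulo the smoothing ideal $OP^{-\infty}$ the operator $b^0$ is determined by $r\rho(b)^0 \in C^\infty(D_{q+}^2)\otimes C^\infty(D_{q-}^2)$, the $C^\infty(S^1)$-factor being trivial in degree $0$. Writing $r\rho(b)^0 = \sum a_+\otimes a_-$ and re-indexing the eigenbasis $e_{ij}^{(n)}$ of $\H$ by the level $n$ together with the two disc variables $x_\pm$ (each running over $\{0,\dots,2n\}$ for fixed $n$, so that the double range reproduces the multiplicity $(2n+1)^2$), the construction of the cosphere bundle $C^\infty(S_q^*)$ shows that the diagonal matrix elements of $b^0$ factorize, up to a negligible (smoothing) error, as $\langle\pi_+(a_+)\e_{x_+},\e_{x_+}\rangle\,\langle\pi_-(a_-)\e_{x_-},\e_{x_-}\rangle$. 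Because $\AD$ acts by the scalar $2n$ on the whole level, this organizes $\tr(b^0\AD^{-w})$, for $\Re w$ large, as $\sum_n (2n)^{-w}\,\tr_{2n}(\pi_+(a_+))\,\tr_{2n}(\pi_-(a_-))$.

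\emph{Third (extracting the residues)} I would insert the asymptotics $\tr_{M}(\pi_\pm(a_\pm)) = \tau_1(a_\pm)M + \tau_0(a_\pm) + o(1)$, which is exactly the content of \eqref{tau_1} and \eqref{tau_0}. Multiplying the two expansions, setting $m=2n$ and summing gives
\begin{align*}
\tr(b^0\AD^{-w}) ={}& \tau_1(a_+)\tau_1(a_-)\,\zeta(w-2) + \big(\tau_1(a_+)\tau_0(a_-)+\tau_0(a_+)\tau_1(a_-)\big)\,\zeta(w-1) \\
& + \tau_0(a_+)\tau_0(a_-)\,\zeta(w) + R(w),
\end{align*}
where $\zeta$ is the Riemann zeta function and $R(w)$ collects the $o(1)$-remainders and is holomorphic near $w=1,2,3$. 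Reading off the simple poles of $\zeta(w-2)$, $\zeta(w-1)$, $\zeta(w)$ at $w=3,2,1$, each of residue $1$, yields the three asserted identities after summing over the components of $r\rho(b)^0$.

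\emph{Main obstacle.} The delicate step is the second one: one must prove that replacing $b^0$ by its symbol $\rho(b)^0$ and by the tensor-product-of-discs picture genuinely commutes with the residue, i.e. that the discarded operators lie in $OP^{-\infty}$ (or at least in an order negative enough) so that they contribute no pole at $w=1,2,3$. Equally one must certify that the remainder $R(w)$ extends holomorphically across these points, which is a Tauberian-type estimate on the $o(1)$ terms; this is precisely where the rapid-decay structure of $C^\infty(SU_q(2))$ and the definition of $C^\infty(S_q^*)$ enter. The zeta-function bookkeeping in the third step is then routine.
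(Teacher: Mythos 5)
The paper contains no proof of this theorem to compare against: it is restated from Connes' paper \cite{connes_sphere}, where it is the culmination of the construction of the cosphere bundle $C^\infty(S_q^*)$. Judged on its own terms, your outline correctly reproduces the architecture of that argument. Step 1 is complete and correct: since $\AD^{-w}$ commutes with $e^{\pm it\AD}$, the identity $\tr(\gamma_t(P)\AD^{-w})=\tr(P\AD^{-w})$ forces every homogeneous component of nonzero degree to have identically vanishing zeta function, and your translation $\ncint b\,\AD^{-j}=\res_{w=j}\tr(b\AD^{-w})$ is right. Step 3 is also correct bookkeeping; a reassuring sanity check is $b=1$, where $\tr(\AD^{-w})=\sum_{m\geq 1}(m+1)^2m^{-w}=\zeta(w-2)+2\zeta(w-1)+\zeta(w)$ reproduces the values $1$, $2$, $1$ predicted by the three formulas together with $\tau_1(1)=\tau_0(1)=1$.

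The genuine gap is that your Step 2 is not one delicate step among three but essentially the whole theorem, and you have only asserted it. Concretely, what must be proved is: (i) every $b\in\Br_q$ decomposes into homogeneous words in $\a_\pm,\b_\pm$ and their adjoints with rapidly decaying coefficients; (ii) for a degree-zero word, the diagonal matrix element at $e^{(n)}_{ij}$, rewritten in the corner variables $x=n+i$, $y=n+j\in\{0,\dots,2n\}$, differs from $\bigl\langle(\pi_+\ot\pi_-)(r\rho(b)^0)(\e_x\ot\e_y),\e_x\ot\e_y\bigr\rangle$ by an error which is $O(q^{cn})$ uniformly --- this rests on the explicit coefficients \eqref{eq_3.1}--\eqref{eq_3.2}, whose denominators tend to $1$ exponentially fast; and (iii) the accumulated error operator is trace class, hence contributes no pole. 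Point (ii) is also what you need to repair a small inaccuracy in Step 3: a remainder that is merely $o(1)$ gives an $R(w)$ that need not be holomorphic at $w=1$ (its residue there vanishes only by an Abelian argument, and simplicity of the dimension spectrum would not follow), whereas the exponential estimate makes $R$ entire. None of this verification appears in the paper under review either; the honest conclusion is that your proposal is a faithful reconstruction of the strategy of \cite{connes_sphere} with its technical core deferred to exactly the place where \cite{connes_sphere} does the work.
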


From theorem \ref{integrals_su_q_2} one gets the explicit form of the local index formula for $SU_q(2)$. The only difficulty is the definition of a cycle $(\Omega, d, \int)$ (see \cite{connes}, chapter III for a careful discussion). More precisely, let us define the $C^\infty(SU_q(2))$-bimodule $\Omega^1$ whose underlying linear space is given by the direct sum $\Omega^1 = C^\infty(SU_q(2)) \oplus \Omega^{(2)} (S^1)$ where $\Omega^{(2)} (S^1)$ is the space of differential forms $f(\t) \, {\rm d} \t^2$ of weight 2 on $S^1$. The bimodule structure is defined by
\begin{eqnarray} \label{eq_3.6}
a (\xi , f) = (a\xi , \sg (a) f) \nn \\ 
  (\xi,f) b = (\xi b , -i \, \sg (\xi) \, \sg (b)' + f \sg (b))
\end{eqnarray}
for $a,b \in C^\infty(SU_q(2))$, $\xi \in C^\infty(SU_q(2))$ and $f \in \Omega^{(2)} (S^1)$ and the map $\sg$ defined in \eqref{eq_3.5} omitting $r_\pm$ in the formula. The differential $d$ is then given by
\begin{equation} \label{eq_3.7}
da = \partial a + \frac{1}{2} \, \sg (a)'' \, {\rm d} \t^2
\end{equation}
as in a Taylor expansion. The functional $\int: \Omega^1 \ra \CH$ is defined by
\begin{equation} \label{eq_3.8}
\int (\xi , f) = \tau (\xi) + \frac{1}{2\pi i} \int f \, {\rm d} \t \, ,
\end{equation}
with the linear map $\tau(a) = \tau_0( r_-(a^0)) $, for $a \in C^\infty(SU_q(2))$, where $a^0$ denotes the component of $a$ of degree 0, with respect to the grading on $C^\infty(SU_q(2))$ induced by the derivative $\partial = \partial_\b - \partial_\a$. From proposition 8.1 in \cite{connes_sphere}, we obtain:

\begin{prop}
The triple $(\Omega, d, \int)$ is a cycle, i. e. $\Omega = C^\infty(SU_q(2)) \oplus \Omega^1$ equipped with $d$ as a graded differential algebra (with $\Omega^0 = C^\infty(SU_q(2))$) and the functional $\int$ is a closed graded trace on $\Omega$.
\end{prop}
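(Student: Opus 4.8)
The plan is to check directly the two defining properties of a cycle: that $(\Omega, d)$ is a graded differential algebra and that $\int$ is a closed graded trace of dimension one. Since $\Omega = C^\infty(SU_q(2)) \oplus \Omega^1$ is concentrated in degrees $0$ and $1$, the product of any two elements of degree one lands in $\Omega^2 = 0$, so $d^2 = 0$ holds trivially and the entire algebra structure reduces to the $C^\infty(SU_q(2))$-bimodule $\Omega^1$. First I would verify that the formulas \eqref{eq_3.6} genuinely define a bimodule, i.e. that the left and right actions are associative and commute with one another; this is a direct computation using only that $\sg$ is an algebra homomorphism and that $C^\infty(S^1)$ acts on $\Omega^{(2)}(S^1)$ by multiplication, so that the weight-two forms commute past scalars.

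Next I would establish that $d$, defined by \eqref{eq_3.7}, is a graded derivation, i.e. $d(ab) = (da)b + a(db)$ for $a,b \in C^\infty(SU_q(2))$ (there is nothing to check in higher degree, since $d$ vanishes on $\Omega^1$). Writing out both sides via \eqref{eq_3.6}, the degree-zero component yields $\partial(ab) = (\partial a)b + a(\partial b)$, which holds because $\partial = \partial_\b - \partial_\a$ is a derivation. The weight-two component reproduces the Leibniz rule for a second derivative: expanding $\sg(ab)'' = (\sg(a)\sg(b))''$ by the ordinary product rule produces the cross term $2\sg(a)'\sg(b)'$, and this is matched exactly by the term $-i\,\sg(\xi)\sg(b)'$ in the right action once one inserts $\xi = \partial a$ and uses the compatibility relation $\sg(\partial a) = i\,\sg(a)'$ between the grading derivation and differentiation on the circle. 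Thus the verification of the differential structure reduces to the homomorphism property of $\sg$ together with this single compatibility identity, both available from the construction of $\sg$ and the grading.

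It then remains to prove that $\int$ of \eqref{eq_3.8} is closed and is a graded trace. For closedness I must show $\int da = 0$ for every $a \in C^\infty(SU_q(2))$. By \eqref{eq_3.7} and \eqref{eq_3.8} one has $\int da = \tau(\partial a) + \frac{1}{4\pi i}\int_0^{2\pi}\sg(a)''\,d\t$; the integral vanishes because $\sg(a)''$ is a total derivative on $S^1$, while $\tau(\partial a) = \tau_0(r_-((\partial a)^0)) = 0$ because $\partial$ preserves the grading and annihilates the degree-zero component, so $(\partial a)^0 = \partial(a^0) = 0$. For the graded trace property the only nontrivial case is $\int\big(a(\xi,f)\big) = \int\big((\xi,f)a\big)$ with $a \in \Omega^0$ and $(\xi,f) \in \Omega^1$, since two degree-one factors multiply to zero. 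Reducing both sides with \eqref{eq_3.6} and \eqref{eq_3.8}, the weight-two contributions cancel because $C^\infty(S^1)$ is commutative, and the identity collapses to the assertion that the trace defect of $\tau$ is a boundary term, namely $\tau(a\xi) - \tau(\xi a) = \frac{-1}{2\pi}\int_0^{2\pi}\sg(\xi)\,\sg(a)'\,d\t$.

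This last identity is the main obstacle, and it is where the specific structure of $\tau_0$ enters. I would prove it from the explicit regularization \eqref{tau_0}: the ordinary partial traces satisfy $\tr_N\big(\pi(a\xi) - \pi(\xi a)\big) = \tr_N[\pi(a),\pi(\xi)]$, and for the operators coming from $C^\infty(D^2_{q\pm})$ this commutator is effectively a telescoping sum whose surviving contribution is a boundary value at level $N$; the divergent part is removed precisely by the subtraction of $N\tau_1$ in \eqref{tau_0}, so that the limit is a finite integral of symbols over $S^1$ matching the right-hand side above. Equivalently, one can route the computation through Theorem \ref{integrals_su_q_2}, expressing $\tau$ and the anomaly in terms of the Wodzicki residue $\ncint$, for which the graded trace property is manifest from the trace property of the noncommutative integral together with Proposition \ref{prop_3.3}. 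Either way, once this trace-defect formula is in hand, assembling the pieces yields both closedness and the graded trace property, completing the verification that $(\Omega, d, \int)$ is a cycle.
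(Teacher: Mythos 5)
Your verification is sound and covers all the pieces that actually need checking; note, however, that the paper itself offers no proof of this proposition at all --- it is imported verbatim as Proposition 8.1 of \cite{connes_sphere} --- so what you have done is reconstruct the verification that Connes carries out there. Your reduction is the right one: with $\Omega$ concentrated in degrees $0$ and $1$ the only nontrivial content is (i) the bimodule axioms for \eqref{eq_3.6}, which indeed follow from $\sg$ being a homomorphism together with the Leibniz rule for $'$, (ii) the Leibniz rule for $d$, which as you say hinges on matching the cross term $\sg(a)'\sg(b)'$ of $\frac{1}{2}\sg(ab)''$ against the $-i\,\sg(\xi)\sg(b)'$ term of the right action via $\sg(\partial a)=i\,\sg(a)'$, (iii) closedness, where $(\partial a)^0=0$ because $\partial$ generates the grading, and (iv) the graded trace property, which you correctly collapse to a trace-defect formula for $\tau$. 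That last identity is not something you need to re-derive from scratch by the telescoping argument you sketch (though that is indeed how it is proved): it is exactly formula (8.5) of \cite{connes_sphere}, which this paper itself quotes and uses later in the proof of Theorem \ref{chern_simons_q} in the form $\tau_0(r_-(\a^{\ast k}\a^{k})) = \tau_0(r_-(\a^{k}\a^{\ast k})) - \frac{1}{2\pi i}\int_0^{2\pi}\sg(\a^{\ast k})\,d\sg(\a^{k})$. You should reconcile your normalization with that formula: your defect $\frac{-1}{2\pi}\int\sg(\xi)\sg(a)'\,d\t$ and Connes' $\frac{1}{2\pi i}\int\sg(\xi)\,d\sg(a)$ differ by a factor of $i$, which traces back to the $-i$ in the right-action formula \eqref{eq_3.6}; one of the two conventions must be adjusted, but this is a bookkeeping matter, not a gap in the argument. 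Apart from that constant, your outline is a faithful and complete substitute for the cited proof.
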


We let $\chi$ be the cyclic $1$-cocycle which is the character of the above cycle, explicitly
\begin{equation*}
\chi(a^0da^1) = \int a^0da^1 \qquad \fl a^0,a^1 \in C^\infty(SU_q(2)) \, .
\end{equation*} 
Similar to the case $q=0$ we define the cochains
\begin{eqnarray*}
&&\vp_0 (a) =  \tr(\,a\, \vert D \vert^{-s})_{s=0},\\
&& \vp_2(a^0da^1da^2)=\,- \frac{1}{24} \ncint a^0  \,\d(a^1) \,\d^2( a^2) \,  \, \AD^{-3} \,.
\end{eqnarray*}

Now, we obtain a form of the local index formula similar to the case $q = 0$ (Theorem 8.2 in \cite{connes_sphere})

\begin{thm} \label{local_index_q}
The local index formula of the spectral triple $(C^\infty(SU_q(2)) , \H , \D)$ is given by the cyclic cocycle $\chi$ up to the coboundary of the cochain $(\vp_0,\vp_2)$.
\end{thm}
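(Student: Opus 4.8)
Spelled out, the assertion is that the pair $(\phi_3,\phi_1)$ produced by the local index formula (Theorem \ref{local_index}) for the triple $(C^\infty(SU_q(2)),\H,\D)$ coincides, as a cochain in the $(B,b)$-bicomplex, with $\chi$ corrected by the coboundary of $(\vp_0,\vp_2)$; concretely
\[
\phi_3 = b\vp_2, \qquad \phi_1 = \chi + B\vp_2 + b\vp_0,
\]
in exact parallel with the $q=0$ statement of Theorem \ref{thm_3.2}. The first thing I would do is exploit the simplicity of the dimension spectrum $\Sigma=\set{1,2,3}$: simplicity means every $\zeta_b$ has at worst a simple pole, so the higher residue functionals $\tau_k$ with $k\geq 1$ vanish on all operators entering $\phi_1$ and $\phi_3$. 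This collapses both cochains to expressions built solely from $\tau_0(\,\cdot\,\AD^{-y})$, removing in particular the $-\tfrac16\tau_1$ term of $\phi_3$ and the $\tau_1,\tau_2$ terms of $\phi_1$.

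Next I would pass from these $\tau_0$-residues to Wodzicki residues. Since $\tau_0(P\AD^{-y})=\res_{z=0}\tr(P\AD^{-y-2z})$ while $\ncint Q=\res_{z=0}\tr(Q\AD^{-z})$, the substitution $w=2z$ at a simple pole yields $\tau_0(P\AD^{-y})=\tfrac12\ncint(P\AD^{-y})$. This is the bridge that makes Theorem \ref{integrals_su_q_2} applicable. Replacing each commutator $\lcom\D,\pi(a^i)\rcom$ by $\d(a^i)$ modulo $OP^{-\infty}$ (legitimate because the Wodzicki residue only sees the symbol on the cosphere bundle $C^\infty(S^*_q)$), I would then substitute for each $\ncint b\,\AD^{-3}$, $\ncint b\,\AD^{-2}$, $\ncint b\,\AD^{-1}$ the values $(\tau_1\otimes\tau_1)$, $(\tau_1\otimes\tau_0+\tau_0\otimes\tau_1)$, $(\tau_0\otimes\tau_0)$ applied to the degree-zero symbol component $r\rho(b)^0$, reading off $\rho(b)^0$ from the homomorphism $\rho$ of Proposition \ref{prop_3.3} together with the grading \eqref{grading}. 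In this way the three Wodzicki levels of Theorem \ref{integrals_su_q_2} absorb the various $\nabla^k$/$\AD^{-(2k+1)}$ combinations in the local index formula.

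Once the residues are expressed through the symbol calculus, the computation reorganizes naturally into the cycle $(\Omega,d,\int)$: the closed graded trace $\int$ is precisely what turns the degree-zero symbol integrals into the character, so the leading piece of $\phi_1$ reproduces $\chi(a^0da^1)=\int a^0da^1$. What remains is purely combinatorial: compute $b\vp_2(a^0da^1da^2da^3)$ from the definition of $b$, the Leibniz rule for $\d,\d^2$, and the trace property of $\ncint(\cdot)\AD^{-3}$ (Lemma \ref{trace_property}), and verify it equals $\tfrac1{24}\ncint\!\big(\pi(a^0)\d(a^1)\d(a^2)\d(a^3)\AD^{-3}\big)=\phi_3$; then compute $B\vp_2+b\vp_0$ and match it against the residual part of $\phi_1$. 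I expect the main obstacle to lie exactly here: tracking the degree-zero symbol components of products of $\d(a^i)$ under the grading, checking that the antisymmetrization built into the operator $B$ of \eqref{B_operator} applied to $\vp_2$ reproduces the third-derivative cochain $\rho(f_0,f_1,f_2)$ with the normalization $-\tfrac1{24}$, and that $b\vp_0$ supplies the polynomial weight $\rho(k)=\tfrac23-k-k^2$. The $q=0$ computation underlying Theorem \ref{thm_3.2} serves as both template and numerical consistency check for these coefficient matchings.
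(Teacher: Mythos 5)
The first thing to say is that the paper offers no proof of this statement: Theorem \ref{local_index_q} is imported verbatim as Theorem 8.2 of \cite{connes_sphere}, so your sketch can only be measured against Connes' argument there, not against anything in this text. In outline you do reproduce that argument: simplicity of the dimension spectrum kills the functionals $\tau_k$ for $k\geq 1$; the surviving $\tau_0$-residues become Wodzicki residues (your conversion $\tau_0(P\AD^{-y})=\tfrac12\ncint P\AD^{-y}$ is correct for the convention $\ncint P=\res_{z=0}\tr(P\AD^{-z})$ of Theorem \ref{integrals_su_q_2}); these are evaluated on the cosphere bundle through $\rho$, $r$ and the grading; and the answer is reassembled via the cycle $(\Omega,d,\int)$ and the correction cochains $(\vp_0,\vp_2)$. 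You are also right that the coefficient matching at the end is where the real work sits --- but note that this ``purely combinatorial'' step is essentially the whole content of sections 6--8 of \cite{connes_sphere}, and your plan leaves it unexecuted.

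The one step that would fail as written is the replacement of $\lcom\D,\pi(a^i)\rcom$ by $\d(a^i)$ ``modulo $OP^{-\infty}$''. Since $\D=F\AD$ one has $\lcom\D,a\rcom=F\d(a)+\lcom F,a\rcom\AD$; discarding the second term already requires the nontrivial fact that $\lcom F,a\rcom$ is sufficiently smoothing (a substantive input of \cite{connes_sphere}, not a formality), and even granting it, each commutator leaves behind a factor of $F$, so an odd product such as the integrand of $\phi_3$ carries one uncancelled $F$: one gets $\phi_3=\tfrac1{12}\tau_0\bigl(F\,a^0\d(a^1)\d(a^2)\d(a^3)\AD^{-3}\bigr)$ rather than the $F$-free expression you aim at. This $F$ is not cosmetic. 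For the Chakraborty--Pal triple the positive spectral projection $P=(1+F)/2$ satisfies $\tr(P\AD^{-s})<\infty$ for $s>2$, so $F$ acts as $-1$ under $\ncint(\cdot)\AD^{-3}$ and produces exactly the sign hidden in the normalization $-\tfrac1{24}$ of $\vp_2$; for the doubled triple of Section 7 the same $F$ is what makes $\phi_3$ vanish identically (see the Corollary there, which keeps the $F$ explicitly). Your target identity $b\vp_2=+\tfrac1{24}\ncint a^0\d(a^1)\d(a^2)\d(a^3)\AD^{-3}$ is therefore off by a sign, and a computation that silently drops the $F$'s lands on the wrong cocycle. Two smaller slips: $b\vp_0$ does not ``supply the weight $\rho(k)=\tfrac23-k-k^2$'' in the case $0<q<1$ --- that weight belongs to the $q=0$ cochain $\vp_0$, whereas here $\vp_0(a)=\tr(a\AD^{-s})_{s=0}$; and you should fix one convention for $\ncint$ throughout, since the surrounding text itself oscillates between $\tau_0=\tfrac12\ncint$ and $\tau_0=\ncint$.
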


\section{Chern-Simons action for $SU_q(2)$}

We start this section with the construction of symbol maps $\sg_q: \Br_q \ra C^\infty(S^1)$, for $0 \leq q < 1$ and the $\ast$-sub-algebras $\Br_q$ (as described in subsection 4.2) of $\B(\H)$ generated by $\d^k(a)$, with $a \in C^\infty(SU_q(2))$ and the derivation $\d(\cdot) = \lcom \AD, \cdot \rcom$. \\

First let $q = 0$. From \eqref{eq_3.4} we have a symbol map $\sg: C^\infty(SU_0(2)) \ra C^\infty(S^1)$. We extend $\sg$ to a $\ast$-homomorphism $\sg_0: \Br_0 \ra C^\infty(S^1)$ on the generators of $\Br_0$ by
\begin{equation*}
\sg_0(\d^m(a)) = i^m \sg_0(a)^{(m)} \, ,
\end{equation*}
where $f^{(m)}$ denotes the $m$-th derivative of $f \in C^\infty(S^1)$.\\
In the general case, $0 < q < 1$, we establish a definition of $\sg_q$ as follows. From Proposition \ref{prop_3.3}, equation \eqref{eq_3.5} and the restriction homomorphism $r$ in theorem \ref{integrals_su_q_2} we can define the following sequence of maps
\begin{equation*} 
\Br_q \build \longrightarrow_{}^\rho C^{\infty} (S_q^*) \build \longrightarrow_{}^{r} C^{\infty} (D_{q+}^2) \otimes C^{\infty} (D_{q-}^2) \build \longrightarrow_{}^{\sg \otimes \sg} C^\infty( S^1) \ot C^\infty(S^1) \cong C^\infty(S^1 \times S^1) \, .
\end{equation*}
The composition-map $\tsg = (\sg \ot \sg) \circ r \circ \rho : \Br_q \ra C^\infty(S^1 \times S^1)$ acts on the generators $\a_\pm$ and $\b_\pm$ of $\Br_q$ as follows
\begin{equation*}
\tsg (\a_+) = \sg \ot \sg (-q \b^* \otimes \b ) = 0\, , \quad \tsg (\a_-) = \sg \ot \sg ( \a \ot \a ) = e^{i \nu} e^{i \p } \, ,
\end{equation*}
\begin{equation*}
\tsg (\b_+) = \sg \ot \sg (\a^* \ot \b ) = 0 \, , \quad \tsg (\b_-) = \sg \ot \sg (\b \otimes \a ) = 0 \, .
\end{equation*}
The unitary $e^{i ( \nu  + \p)}$ is the generator of the image of $\tsg$ in $C^\infty(S^1 \times S^1)$. Hence, via the $*$-homomorphism which maps $e^{i ( \nu + \p )} \mapsto e^{i \t}$, we can identify the image of $\tsg $ with $C^\infty(S^1)$. Let us denote by $\sg_q: \Br_q  \ra C^\infty(S^1)$ the map we obtain from $\tsg$ by means of this identification.

\begin{lem} \label{symbol}
Let $(C^\infty(SU_q(2)), \H, \D)$ be the spectral triple of $SU_q(2)$, with $0 \leq q < 1$, and the algebra $\Br_q$ generated by $\d^k(a)$, with $a \in C^\infty(SU_q(2))$ and the derivation $\d(\cdot) = \lcom \AD, \cdot \rcom$. $\Br_q$ is generated by $\a_\pm$ and $\b_\pm$ with 
\begin{equation*}
\a_{\pm} (e_{(i,j)}^{(n/2)}) = a_{\pm} \left( n/2 , i , j \right) \, e_{\left( i - \frac{1}{2} , j - \frac{1}{2} \right)}^{\left( \frac{n \pm 1}{2} \right)}
\end{equation*}
\begin{equation*}
\b_{\pm} (e_{(i,j)}^{(n/2)}) = b_{\pm} \left( n/2 , i , j \right) \, e_{\left( i + \frac{1}{2} , j - \frac{1}{2} \right)}^{\left( \frac{n \pm 1}{2} \right)}
\end{equation*}
where $a_{\pm}$, $b_{\pm}$ are as in \eqref{eq_3.1} and \eqref{eq_3.2}. The $\ast$-homomorphisms $\sg_q: \Br_q \ra C^\infty(S^1)$, for $0 \leq q < 1$, act on these generators of $\Br_q$ by
\begin{align*}
\sg_q: \Br_q  &\ra C^\infty(S^1) \\
		\a_- &\mapsto e^{i \t}   \\
		\a_+,b_\pm &\mapsto 0  
\end{align*} 
and fulfil the following properties:
\begin{align}
 \ncint \, b \AD^{-3} =& \dfrac{1}{2\pi} \int \sg_q(b) \, d\t \nn \\
 \sg_q(\d^m(b)) =& i^m \sg_q(b)^{(m)} \qquad \fl b \in \Br_q \, . \label{prop_symbol}
\end{align}
with the Wodzicki residue $\ncint$ given in theorem \ref{integrals_su_q_2}.
\end{lem}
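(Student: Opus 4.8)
The plan is to split the proof according to whether $q = 0$ or $0 < q < 1$, since the two symbol maps are built by different routes. In both cases the displayed action of $\sg_q$ on the generators costs almost nothing: for $0 < q < 1$ it is precisely the computation of $\tsg$ on $\a_\pm, \b_\pm$ already carried out before the statement, together with the identification $e^{i(\nu + \p)} \leftrightarrow e^{i\t}$; for $q = 0$ one has $\a_+ = 0$ and $\a = \a_-$, so that $\sg_0(\a_-) = \sg(\a) = e^{i\t}$ and $\sg_0(\b_\pm) = 0$ read off from the explicit formula \eqref{eq_3.4} for $\sg$ together with the defining relation $\sg_0(\d^m(a)) = i^m\sg_0(a)^{(m)}$.

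For the derivation property in \eqref{prop_symbol}, I would first observe that $\d$ implements the $\ZH$-grading: writing $\gamma_t(P) = e^{it\AD}Pe^{-it\AD}$, a homogeneous element $P$ of degree $d$ satisfies $\gamma_t(P) = e^{itd}P$, whence $\d(P) = \lcom \AD, P \rcom = \tfrac{1}{i}\tfrac{d}{dt}\gamma_t(P)\big|_{t=0} = d\,P$. On the other hand, since $\sg_q$ is a $\ast$-homomorphism annihilating $\a_+$ and $\b_\pm$, a surviving homogeneous monomial of degree $d$ is a word in $\a_-$ and $\a_-^*$ in which $\a_-^*$ appears $d$ times more often than $\a_-$, so that $\sg_q(P)$ is a constant multiple of $e^{-id\t}$. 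Comparing $\sg_q(\d^m P) = d^m\sg_q(P)$ with $i^m\sg_q(P)^{(m)} = i^m(-id)^m\sg_q(P) = d^m\sg_q(P)$ then gives \eqref{prop_symbol} on homogeneous elements, and hence on all of $\Br_q$ by linearity. For $q = 0$ the same identity is even more immediate: it holds by definition on the generators $\d^m(a)$, and since both $b \mapsto \sg_0(\d(b))$ and $b \mapsto i\,\sg_0(b)'$ are derivations relative to the homomorphism $\sg_0$, they agree on all of $\Br_0$ by the Leibniz rule and induction on $m$.

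For the residue identity I would invoke Theorem \ref{integrals_su_q_2} in the range $0 < q < 1$, which gives $\ncint b\,\AD^{-3} = (\tau_1 \otimes \tau_1)(r\rho(b)^0)$. The decisive point is that, because $\tsg$ kills every generator except $\a_-$ and sends $\a_-$ to $e^{i(\nu + \p)}$, the function $(\sg \ot \sg)(r\rho(b)^0)$ on $S^1 \times S^1$ depends only on the diagonal coordinate $\nu + \p$ and, under the identification defining $\sg_q$, equals $\sg_q(b)$. Since $\tau_1 = \tfrac{1}{2\pi}\int_0^{2\pi}\sg(\cdot)\,d\t$ on each factor, $\tau_1 \otimes \tau_1$ is the double average $\tfrac{1}{(2\pi)^2}\int_0^{2\pi}\!\int_0^{2\pi}(\sg\ot\sg)(\cdot)\,d\nu\,d\p$, and the elementary averaging identity $\tfrac{1}{(2\pi)^2}\int_0^{2\pi}\!\int_0^{2\pi} g(\nu + \p)\,d\nu\,d\p = \tfrac{1}{2\pi}\int_0^{2\pi} g(\t)\,d\t$ with $g = \sg_q(b)$ produces the first line of \eqref{prop_symbol}. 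For $q = 0$, where $\AD$ is diagonal with eigenvalue $2n$ on $e_{ij}^{(n)}$, I would instead evaluate $\ncint b\,\AD^{-3} = \res_{z=0}\tr(b\,\AD^{-3-z})$ directly: only the degree-$0$ part of $b$ contributes diagonal matrix elements, the multiplicities $(2n+1)^2$ create a single simple pole at $z = 0$, and its residue localizes onto the circle of \eqref{eq_3.4}, reproducing $\tfrac{1}{2\pi}\int\sg_0(b)\,d\t$ with the correct normalization (one can also extract this from the residue computations underlying \cite{connes_sphere}).

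The hard part will be the residue identity for $0 < q < 1$: all the content lies in recognizing that $\sg_q$ factors through the diagonal embedding $S^1 \hookrightarrow S^1 \times S^1$, i.e. that $(\sg\ot\sg)(r\rho(b)^0)$ is a function of $\nu + \p$ alone, which is exactly what lets the two-variable functional $\tau_1 \otimes \tau_1$ collapse to a single integral over $S^1$. Alongside this I would need to check two bookkeeping points: that replacing $b$ by its degree-$0$ component $b^0$ in Theorem \ref{integrals_su_q_2} is compatible with integrating $\sg_q(b)$ (the higher Fourier modes of $\sg_q(b)$ integrate to zero and match the nonzero-degree parts of $b$), and that the assignment $e^{i(\nu+\p)} \mapsto e^{i\t}$ is well defined on the image of $\tsg$, so that $\sg_q$ really is a $\ast$-homomorphism into $C^\infty(S^1)$.
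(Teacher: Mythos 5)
Your proposal follows essentially the same route as the paper: the action on generators is read off from the construction of $\tsg$, the derivation property comes from $\d$ acting as multiplication by the degree on homogeneous elements, and the residue identity for $0<q<1$ is exactly the paper's computation --- apply Theorem \ref{integrals_su_q_2} to monomials, note that words containing $\a_+,\a_+^\ast,\b_\pm,\b_\pm^\ast$ die under both $\tau_1\ot\tau_1\circ r\rho$ and $\sg_q$, and collapse the double average of a function of $\nu+\p$ to a single average over $S^1$ (your ``averaging identity'' is the paper's chain $\frac{1}{4\pi^2}\int e^{\deg(b)i(\nu+\p)}\,d\nu\,d\p=\frac{1}{2\pi}\int e^{\deg(b)i\t}\,d\t$, with the $\d_{\deg(b)}$ factor accounting for the passage to the degree-zero component).

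The one place where your sketch is materially thinner than the paper is the residue identity at $q=0$. Your assertion that ``the residue localizes onto the circle of \eqref{eq_3.4}'' is precisely the point that needs proof: a general element of $\Br_0$ contains terms $\a^{\ast k}f_{k\ell}(\b)\a^{\ell}$ which can be of degree zero and do contribute diagonal matrix elements, so the multiplicity count $(2n+1)^2$ alone does not explain why they drop out of the residue. The paper handles this by showing that every $B\in\ker\sg_0$ is supported (up to conjugation by powers of $\a$) on the range of the projection $e=\b\b^\ast$, and that $e\AD^{-3}$ is trace class because the subset $\{i=-n \text{ or } j=-n\}$ of the basis has only $O(n)$ elements at level $n$; hence $B\AD^{-3}$ is trace class and has vanishing residue, reducing the computation to $f(\a)\AD^{-3}$ with $f=\sg_0(b)$. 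You would need to supply this estimate (or an equivalent one) to close your $q=0$ argument; with it, your proof is complete and coincides with the paper's.
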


\begin{proof}
\textbf{The case $q = 0$}. From \eqref{eq_3.3} we obtain the identities $\a = \a_-$ and $\a_+ = 0$. In addition, from \eqref{eq_3.3} it is easy to check that $\d(\a) = - \a$ and $\d(\b_\pm) = \pm \b_\pm $. This proves the second equality of \eqref{prop_symbol}.\\
The proof of theorem 3.2 in \cite{connes_sphere}, especially the equations (3.46) and (3.47) there, provides us with the result that any operator $B$ in $\ker \sg_0$ can be written as
\begin{equation*}
B = \sum_{k,l \geq 0} \a^{\ast k} b_{kl} \a^l 
\end{equation*} 
with a rapidly decreasing sequence of bounded operators $(b_{kl})$ whose support is contained in $e \H$, with the projection $e = \b \b^\ast = \b^\ast \b$. Since $e$ commutes with $\AD^{-3}$ we have $e \AD^{-3} = e \AD^{-3} e$. Hence, the operator $e \AD^{-3}$ is positive. In addition $e \AD^{-3}$  is trace class, because
\begin{align*}
\tr \left( e \AD^{-3} \right) =& \sum_{n \in \frac{1}{2}\NH } \sum_{i=-n}^n \sum_{j=-n}^n \langle e^{(n)}_{i,j}, e \AD^{-3} e^{(n)}_{i,j} \rangle \\
=& \sum_{n \in \frac{1}{2}\NH} \sum_{i=-n}^n \sum_{j=-n}^n  \dfrac{1}{(2n)^{3}} \langle e^{(n)}_{i,j}, e \, e^{(n)}_{i,j} \rangle \\
=& \sum_{n \in \frac{1}{2}\NH} \sum_{i=-n+1}^n \sum_{j=-n}^n \dfrac{1}{(2n)^{3}}\left(\langle e^{(n)}_{-n,j},e^{(n)}_{-n,j} \rangle +  \langle e^{(n)}_{i,-n} , e^{(n)}_{i,-n} \rangle \right) \\
=& \sum_{n \in \frac{1}{2}\NH } \dfrac{4n+1}{(2n)^{3}} \\
=& 2 \,\sum_{m \in \NH } \dfrac{1}{m^{2}} + \sum_{m \in \NH } \dfrac{1}{m^{3}} < \infty.
\end{align*} 
The space of trace class operators is an ideal in $\B(\H)$. Hence, 
\begin{align*}
\a^{\ast k} b_{kl} \a^l \AD^{-3}=& \a^{\ast k} b_{kl}e \AD^{-3} \AD^3 \a^l  \AD^{-3} \\
=& \a^{\ast k} b_{kl}e \AD^{-3}\underbrace{\left( \sum_{k=0}^3 \d^k(\a^l) \AD ^{-k} \right)}_{\in \B(\H)}
\end{align*}
is trace class for all $ k,l \geq 0 $ and therefore the operator $B \AD^{-3}$ as well. \\
For $b \in \Br_0$ and $f := \sg_0(b) \in C^\infty(S^1)$ we introduce the notation 
\begin{equation*}
f(\a) = \sum_{\ell \geq 0} \hat{f}_{\ell} \, \a^{\ell} + \sum_{k > 0} \hat{f}_{-k} \, \a^{*k}\, ,
\end{equation*}
where $(\hat{f}_k)_k$ denotes the coefficients of the Fourier decomposition of $f$. We have $b - f(\a) \in \ker \sg_0$, thus the operator $(b - f(\a)) \AD^{-3} $ is trace class. Additionally, from the definition of the representation of $\a$ in \eqref{eq_3.3} and equation (4.76) in \cite{connes_sphere} one obtains
\begin{equation*}
\tr \left( \a^k \AD^{-3} \right) = \tr \left( \a^{\ast k} \AD^{-3} \right) = \d_k \qquad \fl k \geq 0  
\end{equation*}
with the Kronecker symbol $\d_k$ and $\a^0 = \a^{\ast 0} = 1$. Summarizing these considerations yields:
\begin{equation*}
\ncint b \AD^{-3} = \ncint f(\a) \AD^{-3} =  \hat{f}_0 = \dfrac{1}{2\pi} \int_0^{2\pi} f(\t) \, d\t \, .
\end{equation*} 
This proves the first equation of \eqref{prop_symbol} and therefore the lemma for the case $q = 0$.\\

\textbf{The case $0 < q <1$}: From the definitions \eqref{eq_3.1} and \eqref{eq_3.2} of $\a_\pm$ and $\b_\pm$ one obtains $\d(a_\pm) = \pm \a_\pm $ and $\d(b_\pm) = \pm b_\pm$. Hence the second identity of \eqref{prop_symbol} follows by the definition of $\sg_q$.\\
Let $b $ be a word in the variables $\a_\pm,\a_\pm^\ast, \b_\pm$ and $\b_\pm^\ast$, i. e. 
\begin{equation*}
b = \prod_{i = 1}^N x_i^{n_i} \, ,
\end{equation*}
with $x_i \in \set{\a_\pm,\a_\pm^\ast, \b_\pm,\b_\pm^\ast}$, $n_i \geq 0$ and the convention $x_i^0 = 1$. We easily obtain from theorem \ref{integrals_su_q_2} and the definition of the symbol map $\sg_q$
\begin{equation*}
\ncint b \AD^{-3} = 0 = \dfrac{1}{2\pi} \int_0^{2\pi} \underbrace{\sg_q(b)}_{=0} \, d\t
\end{equation*}
if there is at least one index $i_0 \in \set{1, \ldots, N}$ such that $n_{i_0} > 0$ and $x_{i_0} \in \set{\a_+,\a_+^\ast,\b_\pm, \b_\pm^\ast}$. Let us consider the case when all $x_i$ contained in the set $\set{\a_-,\a_-^\ast}$. Due to \eqref{grading} the degree $\deg(b)$ of $b$ is given by the sum
\begin{equation*}
\sum_{i=1}^N \deg(x_i) n_i \,.
\end{equation*}
Applying again theorem \ref{integrals_su_q_2} and the definition of $\sg_q$ we compute:
\begin{align*}
\ncint b \AD^{-3} =& \tau_1 \ot \tau_1 (r\rho(b)^0) \\
=& \d_{\deg(b)} \tau_1 \ot \tau_1 (r\rho(b)) \\
=& \d_{\deg(b)} \dfrac{1}{2\pi} \int_0^{2\pi} e^{ \deg(b) i \nu}\, d\nu \, \dfrac{1}{2\pi} \int_0^{2\pi} e^{ \deg(b) i \phi} \, d\phi \\
=& \dfrac{1}{4 \pi^2} \int_0^{2 \pi} e^{\deg(b) i (\nu + \phi) } \, d\nu d\phi \\
=& \dfrac{1}{2 \pi} \int^{2\pi}_0  e^{\deg(b) i \t } \, d\t \\
=& \dfrac{1}{2 \pi} \int^{2\pi}_0  \sg_q(b) \, d\t \,,
\end{align*}
with the Kronecker-delta symbol $\d_k$. Since the space $\Br_q$ is the linear span of elements like $b$, the second equality of \eqref{prop_symbol} is proved.
\end{proof}

If we apply the lemma to compute the cochain $\vp_2$ of theorem \ref{local_index_q}, we obtain for $ 0 < q < 1$
\begin{equation*}
\vp_2 (a^0da^1da^2) = - \dfrac{1}{24} \ncint a^0\d(a^1)\d^2(a^2) \AD^{-3} = - \dfrac{1}{24} \dfrac{1}{2 \pi i} \int_0^{2\pi} \sg_q(a^0)\sg_q(a^1)'\sg_q(a^2)'' \, d \t \, ,
\end{equation*}
i. e. the same formula as in the case $q = 0$.

\begin{cor} \label{cor_4.2}
Let $(C^\infty(SU_q(2)), \H, \D)$ the spectral triple for the quantum group $SU_q(2)$ and $\sg_q: \Br_q \ra C^{\infty}(S^1)$ the symbol map from lemma \ref{symbol} for $0 \leq q < 1$. The cochain $\vp_2$ of theorem \ref{thm_3.2} and \ref{local_index_q} is given by the pullback by $\sg_q$ of the cochain
\begin{equation*}
\rho(f_0,f_1,f_2) = - \dfrac{1}{24} \dfrac{1}{2 \pi i} \int_0^{2\pi} f_0f_1'f_2'' \, d \t \, .
\end{equation*}
\end{cor}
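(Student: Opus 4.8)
The plan is to verify the claimed identity $\vp_2 = \sg_q^\ast \rho$ separately for the two cases $q = 0$ and $0 < q < 1$, since the cochain $\vp_2$ is defined by genuinely different formulas in theorem \ref{thm_3.2} and theorem \ref{local_index_q}. In both cases the arguments $a^0,a^1,a^2$ of $\vp_2$ lie in $C^\infty(SU_q(2)) \subset \Br_q$, so lemma \ref{symbol} supplies the only tools we need.

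First I would dispatch the case $q = 0$, where $\vp_2$ is, by its very definition preceding theorem \ref{thm_3.2}, already the pull-back by the symbol map $\sg$ of \eqref{eq_3.4} of the cochain $\rho(f_0,f_1,f_2) = -\frac{1}{24}\frac{1}{2\pi i}\int_0^{2\pi} f_0 f_1' f_2''\, d\t$. Since the extended homomorphism $\sg_0 \colon \Br_0 \ra C^\infty(S^1)$ of lemma \ref{symbol} restricts to the original $\sg$ on $C^\infty(SU_0(2))$, the pull-back by $\sg$ and the pull-back by $\sg_q = \sg_0$ coincide on the relevant arguments, and the statement is immediate.

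For $0 < q < 1$ the cochain is instead $\vp_2(a^0 da^1 da^2) = -\frac{1}{24}\ncint a^0 \d(a^1)\d^2(a^2)\AD^{-3}$. I would set $b := a^0 \d(a^1)\d^2(a^2)$, which lies in $\Br_q$ because each $\d^k(a^i)$ does, so lemma \ref{symbol} applies. The first property of that lemma converts the Wodzicki residue into $\frac{1}{2\pi}\int \sg_q(b)\, d\t$. As $\sg_q$ is a $\ast$-homomorphism it is multiplicative, so $\sg_q(b) = \sg_q(a^0)\,\sg_q(\d(a^1))\,\sg_q(\d^2(a^2))$; the second property then gives $\sg_q(\d(a^1)) = i\,\sg_q(a^1)'$ and $\sg_q(\d^2(a^2)) = i^2\sg_q(a^2)'' = -\sg_q(a^2)''$, whence $\sg_q(b) = -i\,\sg_q(a^0)\sg_q(a^1)'\sg_q(a^2)''$.

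The remaining step is pure bookkeeping of constants: combining the prefactor $-\frac{1}{24}$, the factor $\frac{1}{2\pi}$ from the integral, and the factor $-i$ just extracted, and using $\frac{-i}{2\pi} = \frac{1}{2\pi i}$, one recovers exactly $\rho(\sg_q(a^0),\sg_q(a^1),\sg_q(a^2))$. I do not expect a genuine obstacle here; the only points demanding care are checking that $b \in \Br_q$ so that lemma \ref{symbol} is applicable, invoking the multiplicativity of $\sg_q$, and correctly tracking the two powers of $i$ coming from the first and second derivatives so that precisely the $\frac{1}{2\pi i}$ appearing in the definition of $\rho$ is reproduced.
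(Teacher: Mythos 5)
Your proposal is correct and follows exactly the paper's own route: the case $q=0$ holds by the very definition of $\vp_2$ as a pull-back by $\sg$, and for $0<q<1$ the paper likewise applies the two properties of lemma \ref{symbol} (residue-to-integral conversion plus $\sg_q(\d^m(b)) = i^m\sg_q(b)^{(m)}$) together with multiplicativity of $\sg_q$ to turn $-\frac{1}{24}\ncint a^0\d(a^1)\d^2(a^2)\AD^{-3}$ into $-\frac{1}{24}\frac{1}{2\pi i}\int_0^{2\pi}\sg_q(a^0)\sg_q(a^1)'\sg_q(a^2)''\,d\t$. The bookkeeping of the factor $i\cdot i^2=-i=\frac{1}{i}$ is exactly what produces the $\frac{1}{2\pi i}$, as you note.
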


The symbol map $\sg_q: C^\infty(SU_q(2)) \ra C^\infty(S^1)$ induces a short exact sequence
\begin{equation*}
0 \longrightarrow \I_q \longrightarrow C^\infty(SU_q(2)) \build \longrightarrow_{}^{\sg_q} C^{\infty} (S^1) \longrightarrow 0 \, .
\end{equation*}
Let $A \in M_N(\Omega^1(C^\infty(SU_q(2))))$ an arbitrary $1$-form. There are $C^\infty(SU_q(2))$-valued $N \times N$ matrices $a^0,\ldots, a^n$ and $b^0, \ldots, b^n$ such that 
\begin{equation*}
A = \sum_i a^idb^i
\end{equation*}
Let $P_q : C^\infty(SU_q(2)) \ra \I_q$ denote the projection onto $\I_q = \ker \sg_q$. We define 
\begin{equation*}
a^i_1 = P_q(a^i), \, b^i_1 = P_q(b^i) \quad \fl i=1, \ldots, n
\end{equation*}
\begin{equation*}
A_1 = \sum_i a^i_1db^i_1 \quad \text{and} \quad A_2 = A -A_1
\end{equation*}

\begin{thm} \label{thm_4.3}
Let $A \in M_N(\Omega^1(C^\infty(SU_q(2))))$ a hermitian $1$-form and $A = A_1 + A_2$ its decomposition. The Chern-Simons action \ref{CS} of $A$ adopts the following form:
\begin{equation*}
S_{CS}(A) = S_{CS}(A_1) - 2 \pi k \phi_1( A_2) 
\end{equation*}
Thus the action is linear in $A_2$ because $\phi_1$ is a $1$-cochain.
\end{thm}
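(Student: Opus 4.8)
The plan is to read $S_{CS}$ off Definition \ref{CS} and separate what is genuinely nonlinear from what is linear. Writing $S_{CS}(A) = 6\pi k\,\phi_{3}\!\left(AdA + \frac{2}{3}A^{3}\right) - 2\pi k\,\phi_{1}(A)$ and using that $\phi_{1}$, being a $1$-cochain, is linear, one has $\phi_{1}(A) = \phi_{1}(A_{1}) + \phi_{1}(A_{2})$. Subtracting the analogous expression for $S_{CS}(A_{1})$, the asserted identity collapses to the single claim that the cubic term does not feel $A_{2}$, namely
\[
\phi_{3}\!\left(AdA + \frac{2}{3}A^{3}\right) = \phi_{3}\!\left(A_{1} dA_{1} + \frac{2}{3}A_{1}^{3}\right).
\]
Thus, after the routine linearity bookkeeping, the whole theorem rests on this one equality, and the slogan ``the action is linear in $A_{2}$'' is precisely its content.

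For the key equality I would invoke the symbol description of $\phi_{3}$. Combining Theorem \ref{local_index_q} (which gives the degree-three part of the index cocycle as $b\vp_{2}$) with Corollary \ref{cor_4.2} (which identifies $\vp_{2}$ with the pull-back $\sg_q^{\ast}\rho$) and Lemma \ref{symbol}, the value $\phi_{3}(a^{0}da^{1}da^{2}da^{3})$ depends on $a^{0},\dots,a^{3}$ only through their symbols $\sg_q(a^{0}),\dots,\sg_q(a^{3})$. Since $\sg_q$ is an algebra $\ast$-homomorphism it extends to a morphism of differential graded algebras $\Omega(C^{\infty}(SU_q(2)))\ra\Omega(C^{\infty}(S^1))$ commuting with $d$ and with the product, so there is a $3$-cochain $\psi$ on $C^{\infty}(S^1)$ with $\phi_{3}(\omega)=\psi(\sg_q(\omega))$ for every $3$-form $\omega$, and $\sg_q(AdA+\frac{2}{3}A^{3}) = \sg_q(A)\,d\sg_q(A) + \frac{2}{3}\sg_q(A)^{3}$. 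In other words the cubic term is governed entirely by the single $1$-form $\sg_q(A)$ over $C^{\infty}(S^1)$, and this pins down exactly which part of $A$ it can see.

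The remaining task, and the main obstacle, is to match the two sides through this symbol map. I would expand $A=A_{1}+A_{2}$ inside $AdA+\frac{2}{3}A^{3}$ and collect the difference as a sum of monomials each carrying at least one factor of $A_{2}$; the aim is to show $\phi_{3}$ annihilates all of them, so that the cubic term is carried by whichever summand survives $\sg_q$ while the complementary summand, lying in $\I_q=\ker\sg_q$ by construction of $P_q$, can contribute to $S_{CS}$ only through $\phi_{1}$. The delicate point is not the cancellation in the abstract but the preparatory manipulation: before $\phi_{3}$ can be evaluated via $\psi$, every mixed monomial must be reduced to canonical form $c^{0}dc^{1}dc^{2}dc^{3}$, and this reshuffling leans on the Leibniz relations (of the type $0=d(uu^{\ast})$) together with the trace property of $\phi_{3}$ from Lemma \ref{trace_property}, exactly as in the gauge-invariance computation of Section~3, to rearrange and pair off terms. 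I expect the genuine effort to be in verifying that after this reduction the symbols of all cross-terms collapse and that none of them leaks back into the nonlinear part, leaving only the single cubic expression predicted by the theorem; this symbol-bookkeeping, rather than the linearity argument of the first paragraph, is where the proof is won or lost.
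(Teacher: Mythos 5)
Your proposal follows essentially the same route as the paper: linearity of $\phi_1$ reduces everything to the cubic term, and the key fact that $\phi_3 = b\vp_2$ is the pullback by $\sg_q$ of a cochain on $C^\infty(S^1)$ (Theorem \ref{local_index_q} together with Corollary \ref{cor_4.2}) forces $\phi_3$ to annihilate every monomial containing a factor from $\I_q = \ker\sg_q$. The ``remaining task'' you flag at the end is what the paper dispatches in one line: since $\I_q$ is a two-sided ideal and the reduction of mixed monomials to canonical form $c^0dc^1dc^2dc^3$ only ever multiplies adjacent letters, each cross-term retains a letter in $\ker\sg_q$ and is killed by $\phi_3$, so no further symbol bookkeeping is needed.
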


\begin{proof}
Due to theorem \ref{thm_3.2} and \ref{local_index_q} $\phi_3 = b \phi_2$. By corollary \ref{cor_4.2} $\phi_2$ is given as the pullback of a $2$-cochain on $C^\infty(S^1)$ by $\sg_q$. From this, it follows immediately that $\phi_3(a^0da^1da^2da^3) = 0$, with $a^i \in C^\infty(SU_q(2))$, if at least one of the $a^i$ is contained in $\I_q$. From this observation we obtain
\begin{align*}
 S_{CS}(A) =& 6 \pi k \phi_3 \left( A dA + \dfrac{2}{3} A^3 \right) - 2 \pi k \phi_1 (A) \\
=& 6 \pi k \phi_3 \left( (A_1+A_2) d(A_1+A_2) + \dfrac{2}{3} (A_1+A_2)^3 \right) - 2 \pi k \phi_1 (A_1 + A_2) \\
=& 6 \pi k \phi_3 \left( A_1 dA_1 +\dfrac{2}{3} A_1^3 \right) - 2 \pi k \phi_1 (A_1) - 2 \pi k \phi_1(A_2)  \\
=& S_{CS}(A_1) - 2 \pi k \phi_1(A_2) 
\end{align*}
\end{proof}
Let us compute the Chern-Simons action explicitly. Let $A \in M_N(\Omega^1(C^\infty(SU_q(2))))$ be an arbitrary $1$-form. Since we are only interested in the $A_1$-part of the $1$-form, we can assume that $A = \sum_{i} a_i db_i$ with
\begin{equation*} 
\begin{array}{ll}
a_i = \sum_{k \geq 0} \l^i_k \a^k + \sum_{k > 0} \l^i_{-k} \a^{\ast k} & b_i = \sum_{k \geq 0} \mu^i_k \a^k + \sum_{k > 0} \mu^i_{-k} \a^{\ast k} \, ,
\end{array}
\end{equation*}
where $(\l^i_k)_k$ and $(\mu^i_k)_k$ is a sequence of complex $N \times N$-matrices of rapid decay. We are only interested in hermitian $1$-forms. Hence, we symmetrise the expression 
\begin{equation*}
\tilde{A} = \dfrac{1}{2} \left( A + A^\ast \right) = \dfrac{1}{2} \sum_{i} a_i db_i + b_i^{ \ast} da_i^{ \ast} -  d( b_i^{ \ast} a_i^{ \ast} ) \, .
\end{equation*}

\begin{thm} \label{chern_simons_q}
Let $\tilde {A}$ the hermitian $1$-form defined above and
\begin{align*}
\Re_{kl} =& \dfrac{1}{2} \sum_{i} \tr \left( \l^i_{-k} \mu^i_{l} + \mu^{i \ast}_{-l} \l^{i \ast}_{k} \right) \\
\Im_{kl} =& \dfrac{1}{2} \sum_{i} \tr \left( \l^i_{-k} \mu^i_{l} - \mu^{i \ast}_{-l} \l^{i \ast}_{k} \right) \, .
\end{align*}
Then the summands of the Chern-Simons action of $\tilde A$ are 
\begin{align*}
\phi_3( \tilde{A} d \tilde A + \tilde{A}^3 ) =& - \dfrac{1}{12} \sum_{k_i \in \ZH} k_2k_3k_4 \Im_{k_1k_2} \Re_{k_3k_4} \d_{k_2-k_1+k_4-k_3}  \\
&+ \dfrac{1}{18}\sum_{k_i \in \ZH} k_2 k_4 k_6 \Im_{k_1k_2}\Im_{k_3k_4}\Im_{k_5k_6} \d_{k_2-k_1+k_4-k_3+k_6-k_5} 
\end{align*}
with Kronecker delta $\d_k$. The chochain $\phi_1$ has different shapes for the two cases:\\
\textbf{For $0 < q < 1$}
\begin{align*}
\phi_1(\tilde A ) =&  \chi(\tilde A) + b \vp_0 ( \tilde A) + B \vp_2( \tilde A) \\
= & -2 \sum_{k \in \ZH} k \Im_{kk}\, F_k(q) + (1 +i) \sum_{k \in \ZH} k^2 \Re_{kk} + \sum_{k \in \ZH}\sgn(k) k^2  \Im_{kk} \\
& - 2 \sum_{k \in \ZH} \sgn(k)\, \Im_{k k}\, H_{\mid k \mid}(q) - \dfrac{1}{12} \sum_{k \in \ZH } k^3 \Im_{k k} 
\end{align*}
with
\begin{align*}
F_k(q) =& \sum_{x \in \NH}^\infty \left(\prod_{j= 1}^{k} \left(1 - q^{2(j+x)} \right) -1\right) \\
H_k(q) =& \vp_0 \left( \lcom \a^k, \a^{\ast k} \rcom \right)
\end{align*}
\textbf{For $q	= 0$}
\begin{align*}
\phi_1( \tilde A ) =& \quad \tau_1(\tilde A) + b \vp_0 (\tilde A ) + B \vp_2 (\tilde A) \\
=& - 2 \sum_{k \in \ZH \setminus \set{0}}  \sgn(k)\, \Im_{kk} \, \sum^{\mid k-1 \mid}_{j=1} \rho(j) - \dfrac{1}{12} \sum_{k \in \ZH} k^3 \Im_{kk} 
\end{align*}
\end{thm}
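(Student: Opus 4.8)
The overall strategy is to transport every cochain to the circle via the symbol map $\sg_q$ of Lemma \ref{symbol}, so that the metric-independent pieces become elementary Fourier integrals while the genuinely $q$-dependent pieces are isolated as regularized traces. I would begin by reducing $\phi_3$ to a closed formula. Since $\phi_3 = b\vp_2$ by Theorems \ref{thm_3.2} and \ref{local_index_q}, and $\vp_2$ is the $\sg_q$-pullback of $\rho(f_0,f_1,f_2) = -\frac{1}{24}\frac{1}{2\pi i}\int_0^{2\pi} f_0 f_1' f_2''\, d\t$ by Corollary \ref{cor_4.2}, I would expand the Hochschild coboundary $b$ on the four arguments; after using the Leibniz rule and cyclicity of the matrix trace the second-derivative terms cancel and everything collapses to
\begin{equation*}
\phi_3(a^0 da^1 da^2 da^3) = -\frac{1}{12}\frac{1}{2\pi i}\int_0^{2\pi} \tr\!\left( \sg_q(a^0)\,\sg_q(a^1)'\,\sg_q(a^2)'\,\sg_q(a^3)' \right) d\t .
\end{equation*}
This single identity carries the entire cubic part of the action.

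Next I would record the symbols of the three summands of $\tilde A = \frac12(A + A^\ast)$. Because $\sg_q$ is a $\ast$-homomorphism with $\sg_q(\a) = e^{i\t}$ and $\sg_q(\b_\pm)=0$, one gets $\sg_q(a_i) = \sum_k \l^i_k e^{ik\t}$, $\sg_q(b_i) = \sum_k \mu^i_k e^{ik\t}$, together with the index-reflected, conjugated series for $a_i^\ast$ and $b_i^\ast$. Substituting these into $\tilde A\, d\tilde A$ and $\tilde A^3$, applying the formula above, and evaluating the elementary integrals $\frac{1}{2\pi}\int_0^{2\pi} e^{in\t}\, d\t = \d_n$ produces sums over Fourier modes constrained by a Kronecker delta, with a factor $ik_j$ descending from each differentiated symbol. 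The symmetrization in $\tilde A$ is exactly what packages the coefficient products into the symmetric combination $\Re_{kl}$ and the antisymmetric combination $\Im_{kl}$; bookkeeping of which factor is differentiated shows that $\tilde A\, d\tilde A$ contributes the $\Im\,\Re$ term and $\tilde A^3$ the $\Im\,\Im\,\Im$ term, with the displayed rational coefficients $-\tfrac{1}{12}$ and $\tfrac{1}{18}$.

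For the linear term I would split $\phi_1 = \chi + b\vp_0 + B\vp_2$ (with $\tau_1$ in place of $\chi$ when $q=0$) and treat each summand on its own. The piece $B\vp_2$ is again purely symbolic: a short computation with \eqref{B_operator} gives $B(a^0 da^1) = da^0\,da^1 - da^1\,da^0$, so pairing with $\vp_2$ and Corollary \ref{cor_4.2} yields a circle integral of products of symbol derivatives which, after Fourier expansion, produces the metric-independent $k^2$ and $k^3$ contributions, in particular the term $-\frac{1}{12}\sum_k k^3 \Im_{kk}$ common to both cases. The remaining summands are where the deformation parameter enters: $\chi(\tilde A)$ feeds the degree-zero part through $\tau(\xi) = \tau_0(r_-(\xi^0))$, and $b\vp_0(\tilde A) = \sum_j \vp_0([c_j,e_j])$ feeds it through the regularized trace $\vp_0(a) = \tr(a\,\AD^{-s})|_{s=0}$.

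The main obstacle is precisely the evaluation of these two regularized traces for $0<q<1$. Using $\pi_\pm(\a)\,\e_x = (1-q^{2x})^{1/2}\e_{x-1}$ one computes $\langle \pi(\a^{\ast k}\a^k)\e_x,\e_x\rangle = \prod_{j=0}^{k-1}(1-q^{2(x-j)})$, so that the zeta-regularized limits defining $\tau_0$ and $\vp_0$ on the monomials $\a^{\ast k}\a^k$ converge to exactly $F_k(q) = \sum_x\big(\prod_{j=1}^k(1-q^{2(j+x)}) - 1\big)$ and $H_k(q) = \vp_0([\a^k,\a^{\ast k}])$; these are the only nonelementary quantities in the answer. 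When $q=0$ the products collapse and the explicit $0$-cochain value $\vp_0(\a^{\ast k}f(\b)\a^k) = \rho(k)\frac{1}{2\pi}\int_0^{2\pi} f\, d\t$ with $\rho(j)=\tfrac23 - j - j^2$ telescopes into $\sum_{j=1}^{|k-1|}\rho(j)$. Finally I would assemble the four contributions, noting that a $1$-cochain pairs only with the diagonal modes $\Im_{kk},\Re_{kk}$ (the off-diagonal modes being removed by the single Kronecker constraint), which yields the two stated formulas for $\phi_1(\tilde A)$.
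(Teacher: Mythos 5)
Your overall route is the one the paper takes: split $\phi_1=\chi+b\vp_0+B\vp_2$ (resp.\ $\tau_1+b\vp_0+B\vp_2$ for $q=0$), transport everything to $C^\infty(S^1)$ through the symbol map of Lemma \ref{symbol}, and isolate the $q$-dependence in the two regularized traces $F_k(q)=\tau_0\bigl(r_-(\a^k\a^{\ast k})\bigr)$ and $H_k(q)=\vp_0\bigl(\lcom\a^k,\a^{\ast k}\rcom\bigr)$. Your reduction of $\phi_3=b\vp_2$ to
\begin{equation*}
\phi_3(a^0da^1da^2da^3)=-\frac{1}{12}\,\frac{1}{2\pi i}\int_0^{2\pi}\tr\left(\sg_q(a^0)\,\sg_q(a^1)'\,\sg_q(a^2)'\,\sg_q(a^3)'\right)d\t
\end{equation*}
is correct and in fact more explicit than the paper, which omits the $\phi_3$ computation altogether; likewise your evaluation of $F_k$ from $\pi_-(\a)\,\e_x=(1-q^{2x})^{1/2}\e_{x-1}$ and your identification of $b\vp_0$ as the source of the $H_{\mid k\mid}(q)$ term agree with the paper's proof.

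There is, however, one concrete misstep that would prevent you from arriving at the stated formula for $\phi_1$: you attribute the ``$k^2$ and $k^3$ contributions'' to $B\vp_2$. Since $B(a^0da^1)=da^0\,da^1-da^1\,da^0$ and each summand of $\vp_2$ carries exactly one first and one second derivative of a symbol, every Fourier mode contributes a factor $k\cdot l^2$ with $l=-k$; hence $B\vp_2(\tilde A)$ equals precisely $-\frac{1}{12}\sum_{k\in\ZH}k^3\Im_{kk}$ and produces no quadratic terms at all. The terms $(1+i)\sum_k k^2\Re_{kk}+\sum_k\sgn(k)k^2\Im_{kk}$ arise entirely inside $\chi(\tilde A)$, and your description of $\chi$ as feeding the degree-zero part through $\tau=\tau_0\circ r_-$ captures only half of the functional \eqref{eq_3.8}: the differential \eqref{eq_3.7} also carries the second-order piece $\frac12\sg(a)''\,d\t^2$, whose pairing with $\frac{1}{2\pi i}\int\cdot\,d\t$ yields $i\sum_k k^2\Re_{kk}$, while the remaining $\sum_k k^2\Re_{kk}+\sum_k\sgn(k)k^2\Im_{kk}$ only appears after one uses the identity $\tau_0\bigl(r_-(\a^{\ast k}\a^k)\bigr)=\tau_0\bigl(r_-(\a^k\a^{\ast k})\bigr)-k$ (formula (8.5) of \cite{connes_sphere}) to express all degree-zero traces through the single function $F_k(q)$. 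Neither ingredient appears in your sketch, so as written it would fail to produce the quadratic part of $\phi_1(\tilde A)$; once these two points are inserted into the $\chi$-computation, the rest of your argument goes through as in the paper.
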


\begin{proof}
Let us compute the chochain $\phi_1$ for the case $ 0 < q < 1$. We start with $\chi( \tilde A)$. Since $\chi = \int$ is a closed trace, we obtain $\chi(d(b_i^\ast a_i^\ast)) =0$. Hence,
\begin{align*}
\chi(\tilde A ) =& \dfrac{1}{2} \sum_i \chi( a_i db_i + b_i^\ast da_i^\ast ) \\
=& \quad \dfrac{1}{2} \sum_i \chi \left( \sum_{k,l \geq 0 } \l_k^i \mu_l^i \a^k d\a^l \right) + \dfrac{1}{2} \sum_i \chi \left( \sum_{k \geq 0, l > 0 } \l_k^i \mu_{-l}^i \a^k d\a^{\ast l } \right) \\
& + \dfrac{1}{2} \sum_i \chi \left( \sum_{k>0 , l \geq 0 } \l_{-k}^i \mu_l^i \a^{\ast k} d\a^l \right) + \dfrac{1}{2} \sum_i \chi \left( \sum_{k,l > 0 } \l_{-k}^i \mu_{-l}^i \a^{\ast k} d\a^{\ast l} \right) \\
& + \dfrac{1}{2} \sum_i \chi \left( \sum_{l,k \geq 0 }  \mu_l^{i \ast}  \l_k^{i \ast} \a^{\ast l} d\a^{\ast k} \right) + \dfrac{1}{2} \sum_i \chi \left( \sum_{l \geq 0, k > 0 }  \mu_{l}^{i \ast} \l_{-k}^{i \ast}  \a^{\ast l} d\a^{ k } \right) \\
& + \dfrac{1}{2} \sum_i \chi \left( \sum_{l>0 , k \geq 0 }  \mu_{-l}^{i \ast} \l_{k}^{i \ast} \a^{ l} d\a^{\ast k} \right) + \dfrac{1}{2} \sum_i \chi \left( \sum_{l,k > 0 } \mu_{-l}^{i \ast} \l_{-k}^{i \ast} \a^{ l} d\a^{ k} \right) 
\end{align*}
\begin{align*}
=&  \quad \sum_{k,l \geq 0 } \dfrac{1}{2} \sum_i \tr \left( \l_k^i \mu_l^i \right) \chi \left( \a^k d\a^l \right) + \sum_{k \geq 0, l > 0 } \dfrac{1}{2} \sum_i \tr \left( \l_k^i \mu_{-l}^i \right) \chi \left( \a^k d\a^{\ast l } \right) \\
& +\sum_{k>0 , l \geq 0 } \dfrac{1}{2} \sum_i \tr \left( \l_{-k}^i \mu_l^i \right) \chi \left( \a^{\ast k} d\a^l \right) +  \sum_{k,l > 0 } \dfrac{1}{2} \sum_i \tr \left( \l_{-k}^{i}  \mu_{-l}^{i} \right) \chi \left( \a^{\ast k} d\a^{\ast l } \right) \\
& + \sum_{l,k \geq 0 } \dfrac{1}{2} \sum_i \tr \left( \mu_l^{i \ast} \l_k^{i \ast} \right) \chi \left( \a^{\ast l} d\a^{\ast k} \right) +  \sum_{l \geq 0, k > 0 } \dfrac{1}{2} \sum_i \tr \left( \mu_{l}^{i \ast} \l_{-k}^{i \ast} \right) \chi \left(\a^{\ast l} d\a^{k } \right) \\
& + \sum_{l>0 , k \geq 0 } \dfrac{1}{2} \sum_i \tr \left( \mu_{-l}^{i \ast} \l_{k}^{i \ast} \right) \chi \left( \a^{l} d\a^{\ast k} \right) + \sum_{l,k > 0 }  \dfrac{1}{2} \sum_i \tr \left( \mu_{-l}^{i \ast} \l_{-k}^{i \ast} \right) \chi \left( \a^{ l} d\a^{ k} \right) \\
\end{align*}
From the equations \eqref{eq_3.6},\eqref{eq_3.7} and \eqref{eq_3.8} one gets for $k,l \in \ZH$
\begin{align*}
\chi \left( \a^k d \a^l \right)=& \quad \chi \left( \a^k \partial(\a^l) \right) + \chi \left( \a^k \dfrac{1}{2} \sg(\a^l)'' d\t^2 \right) \\
=& -l \tau_0 \left( r_- \left( \a^k  \a^l \right)^0 \right) + \dfrac{1}{4 \pi i} \int_{0}^{2 \pi} e^{ik \t} \left(e^{il \t} \right)'' \, d\t \\
=& -l \tau_0 \left( r_- \left( \a^k  \a^l \right) \right) \d_{k+l} -  \dfrac{l^2}{2 i} \d_{k+l}
\end{align*}
with $\a^l = \a^{\ast (-l)}$ for $l < 0$ and the Kronecker delta $\d_m$. Therefore, the above expression reduces to
\begin{align*}
\chi(\tilde A) =& \quad \sum_{k > 0 } \sum_i \tr \left( \l_k^i \mu_{-k}^i \right) \left( k \tau_0(r_-(\a^k \a^{\ast k})) - \dfrac{k^2}{2 i}\right) \\
 &+  \sum_{k > 0} \sum_i \tr \left( \l_{-k}^i \mu_k^i \right) \left( -k \tau_0(r_-(\a^{\ast k} \a^{ k})) - \dfrac{k^2}{2 i} \right) \\
& + \sum_{k > 0 } \sum_i \tr \left( \mu_{k}^{i \ast} \l_{-k}^{i \ast} \right) \left( -k \tau_0(r_-(\a^{\ast k} \a^{ k})) - \dfrac{k^2}{2i} \right) \\
& + \sum_{k>0} \sum_i \tr \left( \mu_{-k}^{i \ast} \l_{k}^{i \ast} \right) \left( k \tau_0(r_-(\a^k \a^{\ast k})) - \dfrac{k^2}{2 i} \right) \\
= & \quad \sum_{k > 0 } \sum_i k \tr \left( \l_k^i \mu_{-k}^i \right) \tau_0(r_-(\a^k \a^{\ast k})) 
 -  k \tr \left( \l_{-k}^i \mu_k^i \right) \tau_0(r_-(\a^{\ast k} \a^{ k}))  \\
& - \sum_{k > 0 }  \sum_i k \tr \left( \mu_{k}^{i \ast} \l_{-k}^{i \ast} \right) \tau_0(r_-(\a^{\ast k} \a^{ k}))  - k  \tr \left( \mu_{-k}^{i \ast} \l_{k}^{i \ast} \right) \tau_0(r_-(\a^k \a^{\ast k})) \\
& + i \sum_{k \in \ZH} k^2 \Re_{kk} 
\end{align*}
Using formula (8.5) in \cite{connes_sphere} one gets
\begin{equation*}
\tau_0(r_-(\a^{\ast k} \a^{ k})) = \tau_0(r_-(\a^{k} \a^{ \ast k})) - \dfrac{1}{2 \pi i} \int_0^{2 \pi} \sg(\a^{ \ast k}) d \sg(a^{k} ) = \tau_0(r_-(\a^{k} \a^{ \ast k})) - k \\
\end{equation*}
Inserting this in the formulae above, one obtains
\begin{align*}
\chi(\tilde A) = & \quad \sum_{k > 0 } \sum_i k \tr \left( \l_k^i \mu_{-k}^i \right) \tau_0(r_-(\a^k \a^{\ast k})) 
 -  k \tr \left( \l_{-k}^i \mu_k^i \right) \tau_0(r_-(\a^{k} \a^{\ast k}))  \\
& - \sum_{k > 0 } \sum_i k \tr \left( \mu_{k}^{i \ast} \l_{-k}^{i \ast} \right) \tau_0(r_-(\a^{k} \a^{\ast k}))  - k \tr \left( \mu_{-k}^{i \ast} \l_{k}^{i \ast} \right) \tau_0(r_-(\a^{k} \a^{\ast k})) \\
& + i \sum_{k \in \ZH} k^2 \Re_{kk} + \sum_{k > 0} \sum_i k^2 \tr \left( \l_{-k}^i \mu_k^i \right) + k^2 \tr \left( \mu_{k}^{i \ast} \l_{-k}^{i \ast} \right) \\
=& - 2 \sum_{k \in \ZH} k \, \Im_{kk} \tau_0(r_-(\a^k \a^{\ast k})) + i \sum_{k \in \ZH} k^2 \Re_{kk} \\
& + \sum_{k > 0} \sum_i k^2 \tr \left( \l_{-k}^i \mu_k^i \right) + k^2 \tr \left( \mu_{k}^{i \ast} \l_{-k}^{i \ast} \right) \\
=& - 2 \sum_{k \in \ZH} k \, \Im_{kk} \tau_0(r_-(\a^k \a^{\ast k})) + i \sum_{k \in \ZH} k^2 \Re_{kk} \\
& + \sum_{k > 0} k^2 \left( \Re_{kk} + \Im_{kk} + \Re_{-k -k} - \Im_{-k -k} \right) \\
=& -2 \sum_{k \in \ZH} k \, \Im_{kk} \tau_0(r_-(\a^k \a^{\ast k})) + (1 + i) \sum_{k \in \ZH} k^2 \Re_{kk} + \sum_{k \in \ZH } \sgn(k) k^2 \Im_{kk}
\end{align*}
By the defining commutation relations of $SU_q(2)$ one obtains immediately
\begin{equation*} 
\a^k \a^{\ast k} = \prod_{j=1}^{k} \left(1 - q^{2j} \b \b^\ast \right) \, .
\end{equation*}
By the definition of the representations $\pi_\pm$ \eqref{eq_3.4}, the operator $r_-(\b \b^\ast)$ acts as a diagonal operator on $\ell^2(\NH)$ with $\pi_- (\b\b^\ast) \varepsilon_x = q^{2x} \varepsilon_x$, for $x \in \NH$. Hence from the definitions \eqref{tau_0} and \eqref{tau_1} of $\tau_0$ and $\tau_1$ we obtain
\begin{align*}
&\tau_0(r_-(\a^k \a^{\ast k})) \\
&= \lim_{N \ra \infty} \tr_N(\pi_-(\a^k \a^{\ast k})) - N \tau_1(\a^k \a^{\ast k}) \\
&= \lim_{N \ra \infty} \tr_N\left(\pi_-\left(\prod_{j=1}^{k} \left(1 - q^{2j} \b \b^\ast \right) \right) \right) - N  \\
&= \lim_{N \ra \infty} \sum_{x =1}^N  \prod_{j=1}^k \left( \left( 1-q^{2(j+x)} \right) -1 \right) = F_k(q) \, .
\end{align*}
Using lemma \ref{symbol} and the definition of the $B$-operator one gets
\begin{align*}
B \vp_2(\tilde A) =&  \dfrac{1}{2}\sum_i \vp_2 \left(da_idb_i - db_ida_i + db_i^{\ast}da_i^{\ast} - da_i^{\ast}db_i^{\ast} \right) \\
=& -\dfrac{1}{48} \dfrac{1}{2 \pi i} \sum_i \int_0^{2 \pi} \tr \left( \sg_q(a_i)' \sg_q(b_i)'' - \sg_q(b_i)' \sg_q(a_i)'' \right) \, d \t \\
& -\dfrac{1}{48} \dfrac{1}{2 \pi i} \sum_i \int_0^{2 \pi}  \tr \left( \sg_q(b_i^{\ast})' \sg_q(a_i^{\ast})'' - \sg_q(a_i^{\ast})' \sg_q(b_i^{ \ast})'' \right) \, d \t 
\end{align*}
By means of  $\tr \left( \sg_q(b_i)' \sg_q(a_i)'' \right) = \tr \left( \sg_q(a_i)'' \sg_q(b_i)' \right) $ and partial integration one achieves
\begin{equation*}
\int_0^{2 \pi} \tr \left( \sg_q(a_i)' \sg_q(b_i)'' - \sg_q(b_i)' \sg_q(a_i)'' \right) \, d \t =  2 \int_0^{2 \pi} \tr \left( \sg_q(a_i)' \sg_q(b_i)'' \right) \, d\t
\end{equation*}
and in an analogue way for the second integral. Hence,
\begin{align*}
B \vp_2(\tilde A) =& -\dfrac{1}{24} \dfrac{1}{2 \pi i} \sum_i \int_0^{2\pi}  \tr \left( \sg_q(a_i)' \sg_q(b_i)'' + \sg_q(b_i^{\ast})' \sg_q(a_i^{\ast})''\right) \, d \t \\
=& -\dfrac{1}{24} \dfrac{1}{2 \pi i} \sum_{k,l \in \ZH} \sum_i \\
& \times \int_0^{2\pi} \tr \left( \l_k^i \mu_l^i \right) \left(e^{ik}\right)' \left(e^{il}\right)'' +  \tr \left( \mu_k^{i \ast} \l_l^{i \ast} \right) \left(e^{-ik}\right)' \left(e^{-il}\right)'' \, d\t \\
=& \quad \dfrac{1}{24}  \sum_{k,l \in \ZH} \sum_i kl^2 \tr \left( \l_k^i \mu_l^i \right) \d_{k+l} - kl^2 \tr \left( \mu_k^{i \ast} \l_l^{i \ast} \right) \d_{k+l} \\
=& \quad \dfrac{1}{24}  \sum_{k \in \ZH} k^3 \sum_i \tr \left( \l_k^i \mu_{-k}^i  - \mu_k^{i \ast} \l_{-k}^{i \ast} \right) \\
=&  - \dfrac{1}{12} \sum_{k \in \ZH} k^3 \Im_{kk} \,
\end{align*}
Finally, the the cochain $b\vp_0$. Since of analogous grading properties as in the case of the cycle $\chi$ one obtains

\begin{align*}
b \vp_0 (\tilde A ) =& b \vp_0 \left( a_i db_i + b_i^\ast da_i^\ast -d(a_i^\ast b_i^\ast) \right) \\
=& \vp_0 \left( \lcom a_i, b_i \rcom \right) + \vp_0 \left( \lcom b_i^\ast, a_i^\ast \rcom \right) - \vp_0 \left(\lcom 1, a_i^\ast b_i^\ast \rcom \right) \\
=& \quad  \sum_{k > 0 } \sum_i \tr \left( \l_k^i \mu_{-k}^i \right) \vp_0 \left( \lcom \a^k , \a^{\ast k} \rcom \right)
 +  \tr \left( \l_{-k}^i \mu_k^i \right) \vp_0 \left( \lcom \a^{\ast k} , \a^{ k} \rcom \right) \\
& + \sum_{k > 0 } \sum_i \tr \left( \mu_{k}^{i \ast} \l_{-k}^{i \ast} \right) \vp_0 \left( \lcom \a^{\ast k} , \a^{ k} \rcom \right) + \tr \left( \mu_{-k}^{i \ast} \l_{k}^{i \ast} \right) \vp_0 \left( \lcom \a^k ,\a^{\ast k} \rcom \right) \\
=& \quad \sum_{k > 0} \sum_i \left( \tr \left( \l_k^i \mu_{-k}^i \right) - \tr \left( \mu_{k}^{i \ast} \l_{-k}^{i \ast} \right) \right) H_k(q)\\
&- \sum_{k > 0} \sum_i \left( \tr \left( \l_{-k}^i \mu_k^i \right) - \tr \left( \mu_{-k}^{i \ast} \l_{k}^{i \ast} \right) \right) H_k(q) \\
=& - 2 \sum_{k \in \ZH} \sgn(k)\, \Im_{kk} \, H_{\mid k \mid}(q)
\end{align*}
The computation of the cochain $\phi_1$ for the case $q = 0$ is much simpler and be omitted. The same holds true for the computation of the cochain $\phi_3$. In the computation of this cochain the different cases need not be distinguished, and the calculation can be done easily if one uses corollary \ref{cor_4.2} and lemma \ref{symbol}.
\end{proof}

\begin{rem}
We want to point out two important aspects of the special shape of the Chern-Simons action computed in theorem \ref{chern_simons_q}. Firstly, the action depends on the deformation parameter $q$, at least the linear $\phi_1$-part of the action. Hence, the critical points of the action is shifted differently for different $q$'s. \\
Secondly, the action depends only on the values $\Re_{kl}$ and $\Im_{kl}$, with $k,l \in \ZH$, and not on the parameters $\l^i_k$ and $\mu_l^i$ itself. Using this property of the action we can define a measure for the path integral consisting of an infinite product of the Lebesgue measures $d\Re_{kl}$ and $d \Im_{kl}$. This will be carried out in detail in the next section. 
\end{rem}

\section{path integral}

In this section we study the drawbacks and opportunities of an explicit computation of the path integral
\begin{equation*}
Z(k) = \int D \lcom A \rcom \, e^{i S_{CS}(A)} \, ,
\end{equation*}
where integration is performed over all hermitian $\Omega^1(C^\infty(SU_q(2)))$-valued $N \times N$ matrices, modulo gauge transformations. Firstly, we give an outline of the gauge breaking mechanism by Faddeev-Popov. For a detailed discussion we refer the reader to G. B. Follands book \cite{folland}.\\
Then we have to make sense of the path integral in the noncommutative setting. Theorems \ref{thm_4.3} and \ref{chern_simons_q} are crucial tools in this section, not only for the definition of the path integral on $SU_q(2)$ but also for a -- at least conceptual -- computation of it. Finally, we point out the problems of an explicit computation by means of a loop expansion series, i. e. a Taylor expansion series of the partition function $Z(k)$ in the variable $k^{-1}$. These problems particularly concern the linear shift of the Chern-Simons action caused by the $\phi_1$-chochain in the index formula.

\medskip

\subsection{Gauge-breaking}

In the computation of the path integral one does not integrate over all hermitian $1$-forms, but only over equivalence classes modulo gauge transformations. In order to achieve this we restrict the space of all $1$-forms to a subset where all configurations modulo gauge transformations are only counted once, i. e. we fix the gauge or break the symmetry. The usual way of doing this is a device due to Faddeev and Popov explained thoroughly in \cite{folland}. To explain the idea, let us consider a similar but much simpler situation. Let $ \set{\sg_t : \, t \in \RH} $ be a one-parameter group of measure-preserving diffeomorphisms on $\RH^n$ whose orbits are (generically) unbounded, and suppose $F$ is a function which is invariant under these transformations. We wish to extract a finite and meaningful quantity from the divergent integral $\int F(x) \, d^n x $. One possibility is to find a hypersurface $M$ that is a cross-section for the orbits and consider instead the integral $\int_M F(x) \, d \Sigma (x) $ where $d \Sigma $ is the surface measure of $M$. This quantity, however, depends on the choice of $M$. A related procedure is to find a smooth function $h$ such that $M = h^{-1}(0)$ and consider the integral $\int F(x)\d(h(x)) \, d^n x$. This quantity depends on the choice of both $M$ and $h$, for one must take into account the behaviour of the delta function under a change of variable. (The basic formula is this: if $ \phi $ is a smooth function on $\RH$ and there is a unique $t_0$ such that $\phi(t_0)=0$, then $\d(t-t_0) = \phi'(t_0) \d(\phi(t))$.) \\
A better idea is to incorporate the appropriate change-of-measure factor into the integral. Specifically, with $M$ and $h$ as above, let 
\begin{equation*}
\Delta (x) = \dfrac{d}{dt} \left[ h(\sigma_t(x)) \right] _{t=t(x)} \, ,
\end{equation*}
where $t(x)$ is the unique number such that $ h(\sg_{t(x)} (x)) = 0 $, and insert the factor
\begin{equation*}
 1 = \int \d(u) du = \int \d( h(\sg_{t(x)})) \Delta (x) \, dt
\end{equation*} 
into the integral $\int F(x) \, d^n x$ to obtain (informally speaking)
\begin{equation*}
\int F(x) \, d^n x = \int F(x) \d(h(\sg_{t(x)})) \d (x) \, dt d^n x.
\end{equation*} 
Now, $F$ and the measure are assumed to be invariant under the transformations $\sg_t$, and it is easily checked that this holds true for $\Delta(x)$ too, see \cite{folland} chapter seven. Hence we can make the substitution $x= \sg_{-t}(y)$ to obtain
\begin{equation*}
\int F(x)d^n x = \left[ \int \, dt \right] \left[ \int F(y) \d(h(y)) \Delta(y) \, d^n y \right].
\end{equation*}
The divergence has now been isolated as the infinite factor $\int \, dt$. The remaining $y$-integral is the quantity we have been seeking: it is finite provided the restriction of $F$ to $M$ decays suitably at infinity, and one can verify that it is independent of the choice of $M$ and $\phi$. (On the informal level, one can observe that $\int \, dt$ is merely the volume of the transformation group, which does not depend on $M$, $\phi$ or $F$, so that its removal should yield an invariant quantity too. This, of course, is the reasoning employed in the functional integral situation, where everything is somewhat illdefined.) The same idea works for multi-parameter groups of transformations; the factor $\Delta(x)$ there is an appropriate Jacobian determinant. \\

With this prelude in mind we can start with our rearrangement of the path integral. Our gauge group is the group $\mathcal{U} M_N (C^\infty(SU_q(2)) )$ of unitary elements, and the gauge transformation is $A \mapsto A^{u} = u A u^{\ast} + u d u^{\ast}$ for a unitary element $u \in \mathcal{U} M_N (C^\infty(SU_q(2)) )$. We are choosing a gauge fixing $h$ of the following form. \\
Let $\mathfrak{X} = \lbrace x_n: \, n \geq 0 \rbrace$ and $\CH \lcom \mathfrak{X} \rcom$ the formal ring of finite polynomials with complex coefficients in the variables $x_n$. For $h \in \CH \lcom \mathfrak{X} \rcom$ there is an integer $k$ such that $h $ depends only on $x_0, \ldots, x_k$ and
\begin{equation*}
h(x_0, \ldots, x_k) = \sum a^{m_0, \ldots, m_j}_{n_0, \ldots, n_j} x_{n_0}^{m_0} \cdots  x_{n_j}^{m_j}
\end{equation*} 
where only finitely many coefficients $a^{m_0, \ldots, m_j}_{n_0, \ldots, n_j}$ are different from zero. Given such a gauge fixing $h \in \CH \lcom \mathfrak{X} \rcom $ we define for any 1-form $A = \sum_{i}a_idb_i$ 
\begin{align*}
h (A) =& \sum_{i} h (a_idb_i) \\ 
=& \sum_{i} h \left( a_i \d (b_i), \ldots, \d^k ( a_i \d( b_i)) \right) \\
=& \sum_{i} \sum a^{m_0, \ldots, m_j}_{n_0, \ldots, n_j} \left( \d^{n_0} (a_i \d (b_i) ) \right)^{m_0} \cdots \left( \delta^{n_j} (a_i \d( b_i) )\right)^{m_j} 
\end{align*}

\begin{rem}
Usually, in the commutative case, i. e. if one works with a $3$-dimensional spin manifold $(M, \S)$, one chooses always Lorenz gauge. Every local map $(U, \chi)$ consisting of an open subset $U \subset M$ and a diffeomorphism $\chi: U \ra \chi(U) \subset \RH^3$ provides us with a local trivialization $\pi^{-1}(U) \cong U \times \RH^3$ of the tangent bundle $TM$, where $\pi: TM \ra M$ denotes the canonical projection. Let $A \in M_N(\Omega^1(M))$ be a matrix of differential $1$-forms on $M$. Due to the local trivialization the restriction $A_U$ of $A$ on the open subset $U$ decomposes into $ A = f_\mu dx^\mu $ for smooth functions $f_\mu: U \ra M_N(\CH)$, with $\mu = 1,2,3$. Lorenz gauge condition can be expressed, at least locally, by $ \partial/\partial x_1 \,  f_1 + \partial/\partial x_2 \,  f_3 + \partial/\partial x_3 \,  f_3 = \omega $, for a smooth, matrix valued function $\omega \in C^\infty(U) \ot M_N(\CH)$ on $U$.\\
In noncommutative geometry neither local maps nor a related decomposition of $1$-forms $A \in M_N(\Omega^1(\A))$ is available in general. There is one exception: the noncommutative $3$-torus $C^\infty(\TH_\Th^3)$ whose Chern-Simons theory is studied in \cite{pfante_torus}. As noncommutative generalization of the $3$-torus $\TH^3$ -- whose tangent bundle $T \TH^3$ is globally trivial, i. e. $T \TH^3 \cong \TH^3 \times \RH^3$ --  $1$-forms $A \in M_N(\Omega^1(C^\infty(\TH_\Th^3))$ always decompose as in the commutative case, see \cite{iochum} for details. Therefore, Lorenz gauge can be chosen and is used in \cite{pfante_torus} to compute the path integral for the noncommutative $3$-torus.\\
For $SU_q(2)$ such a decomposition of $1$-forms is not available at all due to the lack of local maps, which are necessary in the commutative case $S^3 \cong SU(3)$ because the tangent bundle $T S^3$ is not longer globally trivial. Hence we cannot use Lorenzian gauge for $SU_q(2)$ and we were forced introducing a different gauge. But Lorenz gauge is not the canonical one. Any other gauge fixing is as good as the Lorenzian one. Different gauge fixings are studied by P. Gaigg, W. Kummer, and M. Schweda in \cite{schweeder}.
\end{rem}

Denote by $u(A)$ the unitary element such that $h( A^{u(A)} ) - \omega = 0 $, and let $\tilde{A} = A ^{u(A)}$. The geometrical interpretation of $\tilde{A}$ is the following: For a given configuration we follow the orbit $A^{u}$ until we arrive at the configuration given by the gauge fixing.  As before, we write 
\begin{equation} \label{eq_5.1}
1 = \int Du \d(u) = \int Du \d \left( h ( \tilde{A}^{u} ) - \omega \right) \det \left( \left. \d h(\tilde{A}^{u})/ \d u \right|_{ u = 1}  \right) .
\end{equation} 
The integrals here are functional integrals over the space $\mathcal{U} M_N (C^\infty(SU_q(2)) )$ of all unitary elements, the delta-function represents the point mass at the identity $ 1 \in \mathcal{U}M_N (C^\infty(SU_q(2)) )$ and the differential $\left. \d h(\tilde{A}^{u})/ \d u \right|_{ u = 1} $ is the formal \textit{functional derivative} of $ h( \tilde{A}^{u} )$ with respect to $ u $ at the point $ u = 1 $. Moreover, \lgans$ \det $" denotes the functional determinant, the counterpart of the Jacobian determinant in our infinite dimensional setting. \\

For the following calculation we rename $\tilde{A}$ to $A$ and we insert \eqref{eq_5.1} in the path integral, obtaining
\begin{equation*}
\int DA \exp(i S_{CS}(A)) = \iint Du DA \, \exp \left(i S_{CS}(A) \right) \delta(h(A^{u})- \omega)\det \left( \left. \d h(A^{u})/ \d u \right|_{ u = 1}  \right) \, .
\end{equation*} 
The \lgans Lebesgue measure" $DA$ is gauge invariant because a gauge transformation $ A \mapsto u A u^{\ast} + u d u^{\ast} $ results in a conjugation and a simple translation in the space of fields, and both operations leave this space unchanged. Additionally, we have
\begin{equation*}
\exp \left(iS_{CS}(A^u)\right) = \exp\left(i S_{CS}(A) + 2 \pi i \ind \left(P u P \right) \right) = \exp \left( i S_{CS}(A) \right)\, .
\end{equation*} 
Hence the substitution $A^{u^{\ast}}$ for $A$ turns the above integrand into an expression that is independent of $u$, so that
\begin{equation*}
\int DA \, \exp \left(i S_{CS}(A) \right) = \int Du \int DA \, \exp \left( i S_{CS}(A) \right) \d(h(A) - \omega) \det \left( \left. \d h(A^{u})/ \d u \right|_{ u = 1}  \right)
\end{equation*} 
The integral $\int Du$ (the volume of the gauge group) is an infinite constant that can be ignored, and the integral that is left has some hope of being meaningful; this is the analogue of the finite-dimensional result that we derived above.\\
To proceed further, we put the arbitrary element $\omega \in M_N(C^\infty(SU_q(2)))$ to use. Since the above identity is valid for each $\omega$, it remains valid if we take a weighted average over different $\omega$'s. We can then get rid of the $\d$-function by multiplying both sides with 
\begin{equation*}
N(a)^{-1} =   \int D \omega \, \exp \left( \ncint -\frac{i}{2a} \omega^2  \AD^{-3} \right) \,
\end{equation*}
a noncommutative analogue of the Gaussian integral, where \lgans$N(a)"$ means that the normalization of the Gaussian integral will depend on the parameter $a$. However, again it is a constant (infinite) normalization factor independent of any dynamics. Moreover, $\ncint$ denotes the noncommutative replacement of the integral -- the Wodzicki residue of theorem \ref{integrals_su_q_2}. As a result we get  
\begin{align*} 
&\int DA \, \exp \left( i S_{CS}(A) \right)  = N(a) \int Du \\
& \times \iint DA D\omega \, \exp\left( iS_{CS}(A) -\frac{i}{2a} \, \ncint  \omega^{2} \AD^{-3}\right) \delta \left( h(A) - \omega \right) \det \left( \left. \d h(A^{u})/ \d u \right|_{ u = 1}  \right) \, . 
\end{align*} 
Performing the integration over $\omega$ we obtain, up to an infinite constant, 
\begin{equation} \label{eq_5.2} 
\int DA \, \exp\left(i S_{CS}(A) - \frac{i}{2a} \, \ncint h(A)^2 \AD^{-3}\right) \det \left( \left. \d h(A^{u})/ \d u \right|_{ u = 1} \right) \, . 
\end{equation}
Finally, we represent the functional determinant as a Gaussian integral over Grassmann variables due to the following lemma which describes the finite dimensional case.
 
\begin{lem}
\begin{equation*}
\iint \left[ d\zeta^{\ast} d\zeta \right] \, \exp(-\zeta^{\ast} A \zeta)= \det A \, ,
\end{equation*}
with the notational conventions
\begin{equation*}
\zeta^{\ast} A \zeta = \sum_{j,k} \zeta^{\ast}_j A_{jk} \zeta_k, \quad \iint \left[ d\zeta^{\ast} d\zeta \right] = \iint \cdots \iint d\zeta^{\ast}_n d\zeta_n \cdots d\zeta^{\ast}_1 d\zeta_1 \, ,
\end{equation*}
for complex Grassmann variables $\zeta_j$, $\zeta_j^\ast$ $(j=1, \ldots, n)$, and a complex $n \times n$ matrix $A$. 
\end{lem}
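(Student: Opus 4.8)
The plan is to prove this by a direct Berezin expansion. First I would fix the integration conventions for the Grassmann variables: the generators $\zeta_j,\zeta_j^\ast$ mutually anticommute, so in particular $\zeta_j^2 = (\zeta_j^\ast)^2 = 0$, and the Berezin integral is the linear functional determined by $\int d\zeta_j\,1 = 0$ and $\int d\zeta_j\,\zeta_j = 1$ (likewise for each $\zeta_j^\ast$), with the convention that a differential anticommutes past any odd generator it does not integrate. The crucial structural observation is that, because every generator is nilpotent, the formal exponential $\exp(-\zeta^\ast A \zeta)$ is actually a finite polynomial in the $2n$ generators, so no convergence question arises.

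Next I would isolate the only monomial that can survive the full integration $\iint [d\zeta^\ast d\zeta]$. Since each single integration $\int d\zeta_j$ or $\int d\zeta_j^\ast$ annihilates any summand not containing the corresponding variable, the integral vanishes on every monomial except the unique top-degree one in which all of $\zeta_1,\ldots,\zeta_n,\zeta_1^\ast,\ldots,\zeta_n^\ast$ occur exactly once. Writing $\exp(-\zeta^\ast A \zeta) = \sum_m \frac{(-1)^m}{m!}(\zeta^\ast A \zeta)^m$, this top-degree part can only come from the single term $m = n$, since each of the lower powers is of degree $2m < 2n$ and each higher power vanishes by nilpotency.

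Then I would expand $(\zeta^\ast A \zeta)^n = \sum A_{j_1 k_1}\cdots A_{j_n k_n}\,\zeta_{j_1}^\ast \zeta_{k_1}\cdots \zeta_{j_n}^\ast \zeta_{k_n}$ and retain only those index tuples for which $(j_1,\ldots,j_n)$ and $(k_1,\ldots,k_n)$ are each permutations of $(1,\ldots,n)$; any repeated index forces a squared generator and hence a vanishing term. Using that each bilinear block $\zeta_j^\ast \zeta_k$ is even and so commutes with the others, I would reorder the product into the canonical arrangement $\zeta_n^\ast \zeta_n \cdots \zeta_1^\ast \zeta_1$ dictated by the measure and read off the accumulated sign as the signature of the relevant permutation. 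The $n!$ equivalent orderings of the $n$ commuting blocks cancel the prefactor $1/n!$, and combining the $(-1)^n$ with the sign picked up in moving the $\zeta_j^\ast$ and $\zeta_k$ into position yields $\sum_{\sigma \in S_n}\sgn(\sigma)\prod_j A_{j\sigma(j)} = \det A$.

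The main obstacle is the bookkeeping of signs in this last step: one must check that the sign produced when the anticommuting factors are permuted into the fixed order of the measure combines with $(-1)^n$ to give exactly $\sgn(\sigma)$ for the surviving permutation $\sigma$, and that this is independent of the route chosen through the intermediate rearrangements. An alternative that avoids a global sign count is induction on $n$: integrating out $\zeta_n^\ast$ and $\zeta_n$ first reduces the integral to a Laplace expansion along the last row and column, whereupon the inductive hypothesis reproduces $\det A$, trading the permutation-sign bookkeeping for the more familiar signs of cofactor expansion.
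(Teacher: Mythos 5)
The paper does not actually prove this lemma: it is quoted as a standard fact from the physics literature (the surrounding Faddeev--Popov discussion refers the reader to Folland's book), so there is no in-paper argument to compare yours against. Judged on its own, your Berezin-expansion plan is the standard and correct proof: nilpotency reduces the exponential to a polynomial, only the top-degree monomial survives the iterated integral, that monomial comes solely from the $m=n$ term of the series, the evenness of the blocks $\zeta_j^\ast\zeta_k$ lets the $n!$ cancel the $1/n!$, and the surviving sum over permutations is $\det A$. The one place where you describe rather than execute the argument is the final sign count --- checking that the reordering sign together with $(-1)^n$ produces exactly $\sgn(\sigma)$ under the stated measure ordering $d\zeta_n^\ast d\zeta_n\cdots d\zeta_1^\ast d\zeta_1$; this does depend on fixing once and for all whether $\int d\zeta_j$ extracts the coefficient of $\zeta_j$ from the left or the right, and a mismatch of conventions is the usual way such computations go wrong by a factor $(-1)^n$. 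You correctly identify this as the crux, and your proposed induction on $n$ (integrating out $\zeta_n^\ast,\zeta_n$ first and recognizing a cofactor expansion) is a legitimate way to discharge it, so I would regard the proof as essentially complete once either route is written out; a quick sanity check at $n=1$, where the integral of $1-\zeta_1^\ast A_{11}\zeta_1$ must return $A_{11}$, pins down the convention.
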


Let us denote by $\G$ the space of complex Grassmann numbers generated by the pairwise anticommuting Grassman variables $\set{\zeta_j, \zeta_j^\ast}_{j \in \NH}$. We call $\G \ot M_N(\Br_q)$ the space of \textit{ghost-fields}. The noncommutative integral, the Wodzicki-residue, generalizes in a canonical fashion on $\G \ot M_N(\Br_q)$, via
\begin{equation*}
\ncint v \ot a \AD^{-3} =  \left( \ncint a \AD^{-3} \right) v \quad \fl a \in M_N(\Br_q), \, v \in \G \, .
\end{equation*}
Equipped with these preliminaries we rewrite the functional determinant
\begin{equation*} 
\int D \oc Dc \, \exp\left( - \ncint \oc \left. \d h(A^{u})/ \d u\right|_{ u = 1} c \AD^{-3} \right) = \det \left( \left. \d h(A^{u})/ \d u \right|_{ u = 1}  \right)
\end{equation*} 
for two ghost-fields $ \oc , c \in \G \ot M_N(\Br_q)$. Inserting this expression in \eqref{eq_5.2} yields
\begin{equation} \label{eq_5.3}
\int D\oc Dc DA \, \exp \left(i S_{CS}(A) - \ncint \left( \frac{i}{2a} h(A)^2  +  \oc \left. \d h(A^{u})/ \d u  \right|_{ u = 1} c \right) \AD^{-3} \right) 
\end{equation}

\begin{rem}
Note that in \eqref{eq_5.3} we integrate over hermitian $1$-forms itself instead of equivalence classes, which is actually the aim doing gauge breaking. Therefore, \eqref{eq_5.3} allows an explicit computation which is performed in the next subsection as far as possible. 
\end{rem}

\medskip

\subsection{Reduction theorem and Conclusions}

We work out a simplification of \eqref{eq_5.3} by means of theorem \ref{thm_4.3} and lemma \ref{symbol}. As  a result we yield that the expression \eqref{eq_5.3} depends only on the $A_1$-part of $A$ up to an infinite, but irrelevant constant. Using this simplification we receive a more or less precise meaning of the notion \lgans integrating over all hermitian $1$-forms $A$". \\
Let $A \in M_N(\Omega^1(C^\infty(SU_q(2))))$ be a hermitian matrix of $1$-forms, $A =A_1 + A_2$ its decomposition given by theorem \ref{thm_4.3}. By means of lemma \ref{symbol} and the definition of the gauge condition $h$ we get 
\begin{align*}
\ncint \frac{i}{2a} h(A)^2 \AD^{-3} =& \dfrac{i}{4a \, \pi} \int_{0}^{2 \pi} \sg_q(h(A^2)) \, d\t \\
=&  \dfrac{i}{4a \, \pi} \int_{0}^{2 \pi} \sg_q(h(A_1^2)) \, d\t \\
=& \ncint \frac{i}{2a} h(A_1)^2 \AD^{-3} \,. 
\end{align*}
Let $x \in C^\infty(SU_q(2))$ be a selfadjoint element and $u_t = \exp(itx)$, for $t \in \RH$, a continuous 1-parameter group of unitary elements in $C^\infty(SU_q(2))$. We rewrite the functional derivative $ \left. \delta  h(A^{u})/ \delta u  \right|_{ u = 1} $ as 
\begin{equation*}
\left. \delta h(A^{u})/ \delta u  \right|_{ u = 1} = \lim_{t \rightarrow 0} \left( h(A^{u_t}) - h(A) \right) u_{-t} \, . 
\end{equation*}
In order to apply lemma \ref{symbol} again we define the symbol map $\sg_q: \G \ot M_N(\Br_q) \ra \G$ also for ghost-fields by $\sg_q(v \ot a ) = v \sg_q(a) $ for all $a \in M_N(\Br_q)$ and $v \in \G$.
\begin{align*}
&\ncint \oc \left. \d h(A^{u})/ \d u  \right|_{ u = 1}  c \AD^{-3} \\
=& \ncint \oc \lim_{t \ra 0} \left( h(A^{u_t}) - h(A) \right) u_{-t} \, c  \AD ^{-3}\\
=& \dfrac{1}{2 \pi} \int_0^{2\pi} \sg_q(\oc) \sg_q \left(\lim_{t \rightarrow 0} \left( h(A^{u_t}) - h(A) \right) u_{-t}\right)  \sg_q( \oc) \, d \t \\
=&  \dfrac{1}{2 \pi} \int_0^{2\pi} \sg_q(\oc)\left(\lim_{t \ra 0} \left(  \sg_q(h(A^{u_t})) - \sg_q(h(A)) \right) \sg_q(u_{-t}) \right) \sg_q(c) \, d \t \\
=&  \dfrac{1}{2 \pi} \int_{0}^{2\pi} \sg_q(\oc) \left(\lim_{t \ra 0} \left( \sg_q(h(A_1^{u_t})) - \sg_q(h(A_1)) \right) \sg_q(u_{-t}) \right) \sg_q(c) \, d \t \\
=& \ncint \oc \lim_{t \ra 0} \left( h(A_1^{u_t}) - h(A_1) \right) u_{-t} \,c \AD^{-3}  \\
=& \ncint \oc \left. \d h(\A_1^{u})/ \d u  \right|_{ u = 1}  c  \AD^{-3} 
\end{align*}
where the fourth row follows from the fact that $\sg$ is continuous on $M_N(\Br_q)$. We are ready to state the main result of this section.

\begin{thm} \label{thm_5.2}
Let $A = A_1 + A_2$ a hermitian form in $M_N(\Omega^1(C^\infty(SU_q(2))))$ and its decomposition given by theorem \ref{thm_4.3}. The path integral of the Chern-Simons action $S_{CS}(A)$ over all hermitian 1-forms up to gauge equivalence depends only on the $A_1$-part of the action, i. e. up to an infinite -- but irrelevant -- constant, it is equal to 
\begin{equation*}
\int DA_1 Dc D\oc \, \exp \left( iS_{CS}(A_1) - \ncint\left(  \dfrac{i}{2a} h(A_1)^2 + \left. \oc \d h(A_1^{u})/ \d u  \right|_{ u = 1} c \right) \AD^{-3} \right)
\end{equation*}
for any gauge fixing $h \in \CH \lcom \mathfrak{X} \rcom$.
\end{thm}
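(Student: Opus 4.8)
The strategy is to insert the decomposition $A = A_1 + A_2$ of Theorem~\ref{thm_4.3} into the gauge-fixed path integral \eqref{eq_5.3} and to show that the entire dependence on the $A_2$-component factors out as one overall (infinite) constant. First I would invoke Theorem~\ref{thm_4.3} to rewrite the action as $S_{CS}(A) = S_{CS}(A_1) - 2\pi k\,\phi_1(A_2)$, so that the $A_2$-dependence contributed by the Chern--Simons term is concentrated in the single linear summand $-2\pi k\,\phi_1(A_2)$.

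Next I would bring in the two identities established immediately before the theorem, namely that $\ncint \frac{i}{2a}h(A)^2\AD^{-3}$ and $\ncint \oc\left.\d h(A^u)/\d u\right|_{u=1}c\,\AD^{-3}$ both equal their $A_1$-counterparts. Both rest on Lemma~\ref{symbol} together with the fact that $\sg_q$ annihilates $\I_q=\ker\sg_q$ and hence sees only the $A_1$-part; they show that the gauge-fixing term and the ghost term in \eqref{eq_5.3} depend on $A_1$ alone. Combining this with the previous step, the integrand of \eqref{eq_5.3} factors as
\begin{equation*}
\exp\left(-2\pi i k\,\phi_1(A_2)\right)\cdot \exp\left(iS_{CS}(A_1) - \ncint\left(\frac{i}{2a}h(A_1)^2 + \oc\left.\d h(A_1^u)/\d u\right|_{u=1}c\right)\AD^{-3}\right),
\end{equation*}
in which the first factor depends only on $A_2$ and the second only on $A_1$ and the ghost fields $c,\oc$.

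The final step is to split the measure. By the product structure of the space of hermitian $1$-forms recorded in the remark after Theorem~\ref{chern_simons_q}---the whole parametrisation, and in particular the action, depends only on the coordinates $\Re_{kl},\Im_{kl}$---the linear map $A \mapsto (A_1,A_2)$ is an isomorphism onto a direct sum and the Lebesgue-type measure factors as $DA = DA_1\,DA_2$. Integrating first over $A_2$ produces the factor $\int DA_2\,\exp(-2\pi i k\,\phi_1(A_2))$, which, since $\phi_1$ is a $1$-cochain and therefore $\phi_1(A_2)$ depends on $A_2$ only, carries no dependence on $A_1$ or on $c,\oc$. Pulling it out in front as a constant and discarding it---exactly as the infinite volume $\int Du$ of the gauge group was discarded earlier---leaves precisely the asserted expression.

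The main obstacle is making rigorous sense of the decoupled integral $\int DA_2\,\exp(-2\pi i k\,\phi_1(A_2))$: because $\phi_1$ is a nonzero \emph{linear} functional of $A_2$, this is a purely oscillatory, formally divergent (distributional) integral, and a fully rigorous treatment would require specifying a regularisation of the measure $DA_2$. At the conceptual level adopted in this section it suffices to observe that, whatever regularisation is chosen, the integrand has no dependence on $A_1$ or on $c,\oc$, so the $A_2$-integration can only contribute an overall constant that is absorbed into the normalisation. The one further point demanding care is the claim that the measure genuinely factorises along the splitting $A=A_1+A_2$; this is exactly where the explicit coordinatisation by $\Re_{kl},\Im_{kl}$ from Theorem~\ref{chern_simons_q} and its surrounding remark does the work.
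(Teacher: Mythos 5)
Your proposal is correct and follows essentially the same route as the paper's own proof: decompose the action via Theorem \ref{thm_4.3}, use the two identities established just before the theorem to see that the gauge-fixing and ghost terms depend only on $A_1$, factor the measure as $DA = DA_1\,DA_2$, and discard the decoupled $A_2$-integral (which the paper likewise treats as a Fourier-type integral over $A_2$, constant in $A_1$, $c$, $\oc$). Your additional remarks on regularising the oscillatory $A_2$-integral and on justifying the factorisation of the measure via the $\Re_{kl},\Im_{kl}$ coordinates are reasonable elaborations of points the paper passes over quickly, but they do not change the argument.
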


\begin{proof}
From \ref{thm_4.3} we obtain $ S_{CS}(A) = S_{CS}(A_1) - 2 \pi k \phi_1(A_2) $ and by the considerations above we can perform the integration in the $A_2$-part for itself, i. e. 
\begin{align*}
&= \int Dc D\oc DA \,  \exp \left( i S_{CS}(A) - \ncint \left( \dfrac{i}{2a} h(A)^2  + \left. c^\ast \d h(A^{u})/ \d u  \right|_{ u = 1} c \right)  \AD^{-3} \right) \\
&= \int Dc D\oc DA_1 DA_2 \, \exp \left( i S_{CS}(A_1) + i S_{CS}(A_2) \right) \\
& \quad \times \exp \left( - \ncint \left( \dfrac{i}{2a} h(A_1)^2 + \left. \oc \d h(A_1^{u})/ \d u  \right|_{ u = 1} \oc  \AD^{-3} \right) \right) \\
&= \int DA_2 \,  \exp \left(- 2 \pi i k \phi_1(A_2) \right) \\
& \quad \times \int Dc D\oc DA_1 \, \exp \left( i S_{CS}(A_1)  - \ncint  \left( \dfrac{i}{2a} h(A_1)^2  + \left. \oc \d h(A_1^{u})/ \d u  \right|_{ u = 1} c \right)  \AD^{-3}  \right)
\end{align*}
Since the cochain $\phi_1$ is linear in $A_2$ the first integral is a Fourier integral over $A_2$ and the statement follows. 
\end{proof}

\begin{rem}
By means of the theorems \ref{chern_simons_q} and \ref{thm_5.2} one might think that the variables $\Im_{kl}$ and $\Re_{kl}$ with $k,l \in \mathbb{Z}$ are the relevant, dynamical ones and one can choose the measure $D A_1$ to be 
\begin{equation*}
\prod_{k,l \in \ZH} d \Im_{kl} d \Re_{kl} \, ,
\end{equation*}
an infinite product of ordinary Lebesgue measures. The only thing one has to take care of is the appearance of $A_1$ in the gauge fixing term. But if one chooses the gauge fixing $h$ appropriately, for instance linear in $A_1$, one can also achieve  that the parameters $\l_k^i$ and $\mu_l^i$ assemble to $\Re_{kl}$ and $\Im_{kl}$ as for the Chern-Simons action. Hence we obtain a sensible measure for the path integral.
\end{rem}

There are two properties of the Chern-Simons action for the quantum group $SU_q(2)$ which should be emphasized. Firstly, due to theorem \ref{chern_simons_q}, the linear part of our definition of the Chern-Simons action, resulting from the $\phi_1$-cochain, does not vanish. This makes the difference in comparison to the noncommutative $3$-torus or spectral triples coming from a $3$-dimensional spin manifold, where the linear part vanishes identically due to proposition 3.2 in \cite{pfante_torus} or corollary \ref{cor_3.3} respectively. \\
Secondly, the linear part shifts the critical points, the 1-forms where the Chern-Simons action is extremal. This is important for the following reason. The Chern-Simons action on equivalence classes of hermitian $1$-forms $A$, modulo gauge transformations, is only well-defined up to an additional additive constant $2 \pi m$, for an integer $m$. Hence we can not perform Wick rotation in order to replace the integrand $\exp ( 2 \pi i S_{CS}(A) ) $ of the path integral by $\exp ( 2 \pi S_{CS}(A) ) $, because the last term depends on the special choice of the representative $A$. This problem emerges also in the classical, commutative case investigated by Witten in \cite{witten}. Witten circumvents these problems by the use of techniques which are not available in noncommutative geometry: eta invariants, frames, etc. Even though we are able to make sense of the path integral for the quantum group $SU_q(2)$ , it seems to be impossible to reproduce Witten's calculations in our noncommutative framework. Therefore, we do not want to compute the path integral itself but only its $2$-loops, i. e. we calculate the coefficient of $k^{-1}$ in the Taylor expansion series of the path integral $Z(k)$ in the coupling constant $k^{-1}$. This was done, for instance, for the noncommutative $3$-torus in \cite{pfante_torus}.\\
In order to compute the path integral by means of a loop expansion we have to perform this expansion at an extremal point of the Chern-Simons action $S_{CS}$. Heuristically, this can be understood as follows. We compute the integral of $\exp(iS_{CS}(A))$ over all hermitian 1-forms $A$ up to gauge equivalence. For a 1-form $A_0$ where the first variation of the action $S_{CS}$ does not vanish, the integrand $\exp(iS_{CS}(A))$ is oscillatory, so the contribution of neighbouring 1-forms $A$ will tend to cancel out. But for connection $1$-forms where the Chern-Simons action is extremal, the first variation of the action vanishes so that nearby 1-forms give constructive rather than destructive interferences. In other words, the major contribution to the path integral comes from 1-forms where the action is stationary. In the classical, commutative case, where no additional linear terms emerge in the formula of the action functional, the Chern-Simons action is extremal iff the curvature of the connection 1-form is zero (see proposition 3.1 \cite{freed} for a proof of this statement). Connection 1-forms with vanishing curvature are called \textit{flat}.\\
For $SU_q(2)$ things do not work so easily any longer because the appearance of the non vanishing linear part $\phi_1$ forces a shift of the critical points to be different from flat ones. In order to compute the $2$-loops of path integral for $SU_q(2)$ we must find the shifted extremal points. This seems rather difficult. If one takes a closer look at the formulas in theorem \ref{chern_simons_q} for the Chern-Simons action, the coupling of the indices of the independent variables $\Re_{kl}$ and $\Im_{kl}$ by means of the $\d_{\ldots}$-terms in the expression of the $\phi_3$-cochain, and the additional linear term induced by the cochain $\phi_1$ is remarkable. These two obstacles make a correct choice of the parameters $\Im^0_{kl}$ and $\Re^0_{kl}$, such that the Chern-Simons action becomes extremal, rather difficult. At this point, numerical methods should be applied. \\

Finally, we would like to mention that this is not the first time a classical action is expanded by a linear term in the noncommutative setting. In \cite{wulkenhaar_victor} V. Gayal and R. Wulkenhaar investigate the Yang-Mills action for a triple constructed on the noncommutative d-dimensional Moyal space. Moreover, in \cite{wulkenhaar_victor} the authors discovered an additional linear part of the Yang-Mills action for this noncommutative space. With this linear part similar problems arises as in our Chern-Simons setting, due to the shift of the critical points.

\section{Chern-Simons action on noncommutative spaces -- a topological invariant?}

For ordinary $3$-dimensional manifolds one obtains the Chern-Simons action by integration of a $3$-form, i. e. a volume form. Hence the result does not depend on the metric of the manifold. For noncommutative spaces it is not as easy to establish an analogous result. Topological invariance in this case would mean that the action of definition \ref{CS} depends only on the underlying $C^\ast$-algebra $A$ of the spectral triple $(\A, \H, \D)$, but not on the differential structure incorporated into the spectral triple by the Dirac operator $\D$, Hilbert space $\H$, and the pre-$C^\ast$-algebra $\A \subset A$ which contains, due to regularity, the \lgans smooth\rgans $\,$ elements of $A$. But exactly these additional ingredients are involved in the definition of the cochains $\phi_3$ and $\phi_1$. Hence it is not obvious why the action should be a topological invariant as in the classical case.\\

In fact, this does not hold true. The different spectral triples in \cite{pal} and \cite{Landi_1} for the quantum group $SU_q(2)$ provide a counterexample. In many ways the triple $(C^\infty(SU_q(2)), \H ,\D)$ constructed in \cite{Landi_1} is very similar to the one in \cite{pal}. There is essentially one important difference, the definition of the Hilbert space $\H$. The Hilbert space in \cite{Landi_1} is a doubled version of the one in \cite{pal}. More precisely, the Hilbert space of spinors $\H$ has an orthonormal basis labelled as follows. For each $j = 0,\half,1,\dots$, we abbreviate $j^+ = j + \half$ and $j^- = j - \half$. The orthonormal basis consists of vectors $\ket{j\mu n\up}$ for $j = 0,\half,1,\dots$, $\mu = -j,\dots,j$ and $n = -j^+,\dots,j^+$; together with $\ket{j\mu n\dn}$ for $j=\half,1,\dots$, $\mu = -j,\ldots,j$ and $n = -j^-,\dots,j^-$. We adopt a vector notation by juxtaposing the pair of spinors
\begin{equation*}
\kett{j\mu n} := \begin{pmatrix} \ket{j\mu n\up} \\[2\jot]
\ket{j\mu n\dn} \end{pmatrix},
\end{equation*} 
and with the convention that the lower component is zero when $n = \pm(j + \half)$ or $j = 0$. In this way, we get a decomposition $\H = \H^\up \oplus \H^\dn$ into subspaces spanned by the ``up'' and ``down'' kets respectively. \\
The Dirac operator $\D$ is diagonal in the given orthonormal basis of $\H$, and is a selfadjoint operator of the form
\begin{equation*}
\D \kett{j\mu n} = \begin{pmatrix} 2 j + \sesq & 0 \\
0 & -2 j - \half \end{pmatrix} \kett{j\mu n}.
\end{equation*} 
The spectrum of this Dirac operator coincides with the one of the classical Dirac operator of the sphere $S^3$ equipped with the standard metric. We let $\D = F \, \AD$ be the polar decomposition of $\D$ where $ \AD := \sqrt{\D ^2} $ and $F = \sgn \D$. Explicitly, we see that
\begin{equation*}
F\kett{j\mu n}
= \begin{pmatrix} 1 & 0 \\ 0 & -1 \end{pmatrix} \kett{j\mu n},
\qquad
\mid \D \mid \,\kett{j\mu n} = \begin{pmatrix} 2j + \sesq & 0 \\
0 & 2j + \half \end{pmatrix} \kett{j\mu n}.
\end{equation*}
Clearly, $P^\up := \half(1 + F)$ and $P^\dn := \half(1 - F) = 1 - P^\up$ are the orthogonal projectors whose range spaces are $\H^\up$ and $\H^\dn$ respectively. \\
The precise form of the representation of the generators $\a$ and $\b$ is not necessary for our purposes. Hence we skip the definitions and refer the reader to \cite{Landi_1}. \\

In \cite{Landi_2} the authors proceed by constructing a cosphere bundle $C^{\infty}(S^\ast_q)$ for $(C^\infty(SU_q(2)), \H, \D)$ in an analogous way as in \cite{connes_sphere}, and end up with a similar result for the local index formula. The maps $\rho, \tau_1, \tau_0^\up, \tau_0^\dn$ and the grading are defined in a similar way as in subsection 4.2. Let us state theorem 4.1 in \cite{Landi_2}.

\begin{thm} \label{thm_6.1}
The dimension spectrum of the spectral triple $(C^\infty(SU_q(2)),\H,\D)$ is
simple and given by $\lbrace 1,2,3 \rbrace$; the corresponding residues are
\begin{align*}
\ncint T \AD^{-3} &= 2 (\tau_1 \ot \tau_1) \left( r \rho(T)^0 \right)\, , \\
\ncint T \AD^{-2} &= \left( \tau_1 \ot (\tau_0^\up + \tau_0^\dn) + (\tau_0^\up + \tau_0^\dn) \ot \tau_1 \right) \left(r\rho(T)^0\right) \, , \\
\ncint T \AD^{-1} &= (\tau_0^\up \ot \tau_0^\dn + \tau_0^\dn \ot \tau_0^\up) \left( r \rho(T)^0 \right) \, ,
\end{align*}
\begin{align*}
\ncint P^\up T \AD^{-3} &= (\tau_1 \ot \tau_1) \left(r\rho(T)^0\right) \, , \\
\ncint P^\up T \AD^{-2} &= \left(\tau_1 \ot \tau_0^\dn + \tau_0^\up \ot \tau_1 \right) \left(r\rho(T)^0\right) \, , \\
\ncint P^\up T \AD^{-1} &= (\tau_0^\up \ot \tau_0^\dn) \left(r\rho(T)^0\right) \, ,
\end{align*}
with $T \in \Br_q$. $\rho(T)^0$ denotes the zero graded part of $\rho(T)$ in the cosphere bundle $C^{\infty}(S^\ast_q)$.
\end{thm}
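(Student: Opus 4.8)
The plan is to follow the strategy used for the Chakraborty--Pal triple in Theorem \ref{integrals_su_q_2}, adapting it to the doubled Hilbert space $\H = \H^\up \oplus \H^\dn$ of \cite{Landi_1}. Since we are computing Wodzicki residues we may work modulo smoothing operators $OP^{-\infty}$ and pass to the cosphere bundle $C^\infty(S^*_q)$ through the homomorphism $\rho$. Exactly as in Lemma \ref{symbol}, the one-parameter group $\gamma_t(P) = e^{it\AD} P e^{-it\AD}$ induces a $\ZH$-grading, and a homogeneous component of nonzero degree contributes nothing to $\res_{z=0} \tr(P\AD^{-z})$; hence only the degree-zero part $\rho(T)^0$ enters each residue, which is the reason every right-hand side is evaluated on $r\rho(T)^0$.

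First I would split each trace along the orthogonal decomposition $\H = \H^\up \oplus \H^\dn$, writing $\tr(T\AD^{-s}) = \tr(P^\up T \AD^{-s}) + \tr(P^\dn T \AD^{-s})$. On $\H^\up$ the operator $\AD$ acts with eigenvalue $2j + \sesq$ and on $\H^\dn$ with eigenvalue $2j + \half$, with the multiplicities dictated by the basis $\kett{j\mu n}$. This immediately explains the projected formulas: restricting to the range of $P^\up$ simply keeps the first summand. It therefore suffices to prove the three $P^\up$ identities; the unprojected residues follow by adding the $\H^\dn$ contribution, which is obtained from the $\H^\up$ one by replacing the shift $\sesq$ with $\half$ and interchanging $\tau_0^\up$ with $\tau_0^\dn$. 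In the $\AD^{-3}$ line the two shifts are invisible at leading order, so both summands yield the same $\tau_1 \ot \tau_1$ and their sum produces the factor $2$.

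Next I would reduce the partial traces to products of the functionals $\tau_1$ and $\tau_0^{\up/\dn}$. Because $\rho$ lands in $C^\infty(D^2_{q+}) \ot C^\infty(D^2_{q-}) \ot C^\infty(S^1)$, with the $S^1$ factor carrying the grading, evaluating a degree-zero symbol factorises over the two disk blocks into the $\ell^2(\NH)$ traces of \eqref{tau_1} and \eqref{tau_0}: the linear growth in $N$ of the partial trace $\tr_N$ has slope computed by $\tau_1$, while the regularised finite part is captured by $\tau_0^\up$ or $\tau_0^\dn$ according to which eigenvalue shift governs that block. Matching the residue at $z=0$ of $\tr(b\AD^{y-2z})$ against the appropriate expansion coefficient then produces $\tau_1 \ot \tau_1$ at $y=-3$, the sum $\tau_1 \ot \tau_0^\dn + \tau_0^\up \ot \tau_1$ at $y=-2$, and $\tau_0^\up \ot \tau_0^\dn$ at $y=-1$ for the up block.

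The main obstacle will be the bookkeeping of the subleading asymptotics. At order $\AD^{-3}$ the computation is essentially that of Theorem \ref{integrals_su_q_2} up to the overall factor $2$, but at orders $\AD^{-2}$ and $\AD^{-1}$ one must expand $(2j+\sesq)^{-s}$ and $(2j+\half)^{-s}$ and retain the first correction term; it is precisely this term that distinguishes $\tau_0^\up$ from $\tau_0^\dn$ and fixes the asymmetric pairings in the middle and bottom lines. Verifying that these finite parts converge to well-defined functionals of the form \eqref{tau_0}, and that the cross-terms between the two disk blocks assemble into exactly the stated tensor products rather than some other combination, is the delicate part; it is carried out in full in \cite{Landi_2}.
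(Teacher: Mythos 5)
You should first be aware that the paper does not prove this statement at all: it is imported verbatim as Theorem 4.1 of \cite{Landi_2} (the text preceding it reads ``Let us state theorem 4.1 in \cite{Landi_2}''), so there is no in-paper argument to compare against. Judged on its own terms, your outline is consistent with how the result is actually established in that reference: reduce modulo $OP^{-\infty}$ to the cosphere bundle, use the $\gamma_t$-grading to kill all nonzero-degree components so that only $r\rho(T)^0$ survives, split the trace along $\H = \H^\up \oplus \H^\dn$ (which is legitimate since $\AD^{-s}$ commutes with $P^\up$, so only the diagonal blocks contribute), and read off the three residues from the leading, first-order, and second-order terms of the partial-trace asymptotics, with the eigenvalue shifts $\sesq$ versus $\half$ responsible for the $\tau_0^\up$/$\tau_0^\dn$ distinction. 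None of these steps is wrong, and the explanation of the factor $2$ in the $\AD^{-3}$ line and of the swap symmetry relating the up and down contributions is correct.

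The gap is that this remains a plan rather than a proof: everything that makes the theorem nontrivial is deferred. Concretely, (i) you do not justify why the diagonal block of $T$ on $\H^\dn$ has the \emph{same} symbol $r\rho(T)^0$ as the up block --- this requires knowing that $\lcom P^\up, T\rcom$ is negligible (trace class / smoothing), which is exactly the fact the paper later invokes from \cite{Landi_2} to kill $\phi_3$, and without it the ``replace the shift and swap $\tau_0^\up \leftrightarrow \tau_0^\dn$'' step is unsupported; (ii) the identification of the subleading coefficients of $\tr_N$ with the specific functionals $\tau_0^\up$, $\tau_0^\dn$ (whose definitions are not even reproduced in this paper) and the derivation of the asymmetric pairing $\tau_1 \ot \tau_0^\dn + \tau_0^\up \ot \tau_1$ in the middle line are precisely the content of the theorem, and you explicitly hand them off to \cite{Landi_2}; (iii) simplicity of the dimension spectrum, which is part of the assertion, is not addressed. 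Since the paper itself only cites the result, your sketch is arguably no worse than the paper's treatment, but it should be presented as a summary of the cited proof, not as an independent one.
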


Knowing all these residues we can compute the cyclic cocycle $(\phi_3, \phi_1)$ of the local index theorem in general, and the chochain $\phi_3$ in particular. Due to the simplicity of the dimension spectrum the cochain $\phi_3$ reduces to the formula 
\begin{equation*}
\phi_3(a^0da^1da^2da^3) = \frac{1}{12} \ncint a^0\, [D,a^1]\,[D,a^2]\,[D,a^3] \AD ^{-3} \, .
\end{equation*}

\begin{cor}
Let $(C^\infty(SU_q(2)), \H, \D)$ be the spectral triple for $SU_q(2)$ constructed in \cite{Landi_1}. Then the cochain $\phi_3$ vanishes identically.
\end{cor}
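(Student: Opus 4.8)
The plan is to feed the reduced formula $\phi_3(a^0da^1da^2da^3)=\frac{1}{12}\ncint a^0\lcom\D,a^1\rcom\lcom\D,a^2\rcom\lcom\D,a^3\rcom\AD^{-3}$ into the residue formulas of Theorem \ref{thm_6.1}; since the dimension spectrum is simple this reduced formula is available, so everything comes down to showing that this Wodzicki residue vanishes for all $a^0,\dots,a^3\in C^\infty(SU_q(2))$. First I would isolate from Theorem \ref{thm_6.1} the two lines governing the $\AD^{-3}$-residue, namely $\ncint T\AD^{-3}=2(\tau_1\ot\tau_1)(r\rho(T)^0)$ and $\ncint P^\up T\AD^{-3}=(\tau_1\ot\tau_1)(r\rho(T)^0)$, valid for $T\in\Br_q$. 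Comparing them gives $\ncint P^\up T\AD^{-3}=\frac12\ncint T\AD^{-3}$, and because $F=2P^\up-1$ this is exactly the master identity $\ncint F\,T\,\AD^{-3}=0$ for every $T\in\Br_q$; put differently, the spectral subspaces $\H^\up$ and $\H^\dn$ of $F$ contribute equally to the $\AD^{-3}$-residue of any $T$.

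The second step is to produce an overall factor $F$ inside the integrand. Since $F$ commutes with $\AD$ (hence with $\AD^{-z}$) and $\ncint$ is a trace, the off-diagonal blocks $P^\up X P^\dn$ and $P^\dn X P^\up$ drop out of $\ncint X\AD^{-3}$, so only the two diagonal blocks contribute. On $\H^\up$ one has $\D=+\AD$ and on $\H^\dn$ one has $\D=-\AD$; therefore each commutator $\lcom\D,a^i\rcom$ restricts to $+\d(a^i)$ on $\H^\up$ and to $-\d(a^i)$ on $\H^\dn$, with $\d=\lcom\AD,\cdot\rcom$. Treating $\pi$ as block-diagonal for the moment and setting $\tilde T=a^0\d(a^1)\d(a^2)\d(a^3)\in\Br_q$, the three commutators produce a relative sign $(-1)^3=-1$ between the blocks: $P^\up TP^\up=P^\up\tilde TP^\up$ while $P^\dn TP^\dn=-P^\dn\tilde TP^\dn$. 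Hence $\ncint T\AD^{-3}=\ncint P^\up\tilde T\AD^{-3}-\ncint P^\dn\tilde T\AD^{-3}$, and since the master identity forces the two blocks to contribute equally, this difference vanishes; equivalently $T\equiv F\tilde T$ modulo operators with zero $\AD^{-3}$-residue, whence $\phi_3(a^0da^1da^2da^3)=\frac{1}{12}\ncint F\tilde T\AD^{-3}=0$.

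The hard part, and the step where the block-diagonal simplification must be justified, is the control of the off-diagonal part of the representation. The element $\pi(a)$ is \emph{not} block-diagonal for $F$; boundedness of $\lcom\D,\pi(a)\rcom$ forces its off-diagonal component only into $OP^{-1}$, and because $\AD$ raises order by one these components still generate genuine order-$0$ terms inside each $\lcom\D,a^i\rcom$. They are therefore not negligible by order counting and can a priori enter the diagonal blocks $P^\up TP^\up$ and $P^\dn TP^\dn$ through mixed paths such as $\up\to\dn\to\dn\to\up$. The substantive task is to show that these mixed contributions cancel in $(\tau_1\ot\tau_1)(r\rho(T)^0)$. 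I would settle this at the level of the cosphere bundle, computing $\rho(T)^0$ from the explicit form of $\rho$ on the generators and on the commutators $\lcom\D,a\rcom$ furnished by \cite{Landi_2} (the analogues of Proposition \ref{prop_3.3} and Lemma \ref{symbol}); I expect the mixed terms either to pair off or to integrate to zero under $\tau_1\ot\tau_1$, so that only the diagonal term survives and the $(-1)^3$ sign bookkeeping of the second step applies verbatim. Combined with the master identity, this then delivers $\phi_3\equiv0$.
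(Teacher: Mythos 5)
Your overall strategy is the paper's: you extract the same master identity $\ncint F\,T\,\AD^{-3}=0$ for $T\in\Br_q$ by comparing the two $\AD^{-3}$-lines of Theorem \ref{thm_6.1} via $F=2P^\up-1$, and you aim to reduce $\phi_3$ to $\frac{1}{12}\ncint F\,a^0\d(a^1)\d(a^2)\d(a^3)\AD^{-3}$ with the sign $(-1)^3$ accounting for the $F$. But the step you yourself flag as ``the hard part'' --- controlling the off-diagonal blocks $P^\up\pi(a)P^\dn$ and $P^\dn\pi(a)P^\up$ --- is left unresolved: you correctly observe that order counting alone does not kill them (an $OP^{-1}$ off-diagonal piece of $\pi(a)$ hit by $\D$ produces an order-$0$ term, which genuinely contributes to an $\AD^{-3}$-residue in dimension $3$), and then you only \emph{conjecture} that the mixed contributions cancel in $(\tau_1\ot\tau_1)(r\rho(T)^0)$, proposing an uncarried-out cosphere-bundle computation. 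As written, the argument does not close.

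The missing ingredient is a single, much stronger fact proved in \cite{Landi_2}: $\lcom F,x\rcom$ is \emph{trace class} for every $x$ in the algebra $\Psi^0(\A)$ generated by $\d^k(\A)$ and $\d^k(\lcom\D,\A\rcom)$. Since trace class operators form an ideal in $\B(\H)$ and have vanishing Wodzicki residue, this lets one write $\lcom\D,a^i\rcom=F\lcom\AD,a^i\rcom$ modulo negligible terms, commute the three factors of $F$ to the front past $a^0$ and the $\d(a^j)$ at the cost of trace class errors, and use $F^3=F$ to obtain
\begin{equation*}
\phi_3(a^0da^1da^2da^3)=\frac{1}{12}\ncint F\,a^0\,\d(a^1)\,\d(a^2)\,\d(a^3)\,\AD^{-3}\,,
\end{equation*}
after which your master identity finishes the proof. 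In particular the off-diagonal parts of $\pi(a)$, being $-\frac{1}{2}F\lcom F,\pi(a)\rcom$, are themselves trace class --- far better than $OP^{-1}$ --- so the cancellation you hoped to verify by hand in the cosphere bundle is automatic and no case analysis of mixed paths is needed. Your proof becomes complete once you import this trace-class statement; without it, the reduction to the block-diagonal picture is unjustified.
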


\begin{proof}
In \cite{Landi_2} it was proven that $\lcom F, x \rcom$ is a trace class operator for all $x \in \Psi^{0}(\A)$, where $\Psi^{0}(\A)$ denotes the algebra which is generated by $\delta^{k}(\A)$ and $\delta^{k}(\lcom \D, \A \rcom)$ for $k \geq 0$, with $\d(\cdot) = \lcom \AD, \, \cdot \, \rcom$. Since the space of trace class operators is an ideal in $\B(\H)$ and $\ncint T = 0$ for any trace class operator $T$, we can rewrite the cochain $\phi_3$ in the following form: 
\begin{align*}
\phi_3(a_0da_1da_2da_3) =& \frac{1}{12} \ncint a_0\,[D,a_1]\,[D,a_2]\,[D,a_3]\,|D|^{-3} \\
=& \dfrac{1}{12} \ncint F a_0\,[|D|,a_1]\,[|D|,a_2]\,[|D|,a_3]\,|D|^{-3}.
\end{align*} 
We have the identity $F = 2 P^\up -1 $ and with theorem \ref{thm_6.1} we obtain 
\begin{align*}
\phi_3(a_0da_1da_2da_3) =& \quad \dfrac{1}{12} \ncint F a_0\,[|D|,a_1]\,[|D|,a_2]\,[|D|,a_3]\,|D|^{-3} \\
=& \quad \dfrac{1}{6} \ncint  P^\up a_0\,[|D|,a_1]\,[|D|,a_2] \,[|D|,a_3]\,|D|^{-3} \\
&- \dfrac{1}{12} \ncint  a_0\,[|D|,a_1]\,[|D|,a_2]\,[|D|,a_3] \,|D|^{-3}\\
=& \quad \dfrac{1}{6}(\tau_1 \ot \tau_1) \left(r\rho \left(a_0\,[|D|,a_1]\,[|D|,a_2]\,[|D|,a_3] \right)^0\right)  \\
& - \dfrac{1}{12} \left( 2 (\tau_1 \ot \tau_1) \left(r\rho \left( a_0\,[|D|,a_1]\,[|D|,a_2]\,[|D|,a_3] \right)^0\right) \right)\\
 =& \quad 0
\end{align*} 
\end{proof}

Since the cochain $\phi_3$ vanishes the Chern-Simons action for the triple in \cite{Landi_1} reduces to the $\phi_1$-term, i. e. for any matrix of 1-forms $ A \in \Omega^1(M_N(C^\infty(SU_q(2))))$, we obtain $S_{CS}(A) = -2 \pi k \phi_1(A)$. Hence the action is linear in $A$ which is certainly not true for the action computed in theorem \ref{chern_simons_q}.

\begin{thm}
The Chern-Simons action of definition \ref{CS} depends not only on the underlying $C^\ast$-algebra $A$ but also on the particular spectral triple $(\A, \H, \D)$. Hence it is not a topological invariant.
\end{thm}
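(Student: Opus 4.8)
The plan is to prove the statement by means of a counterexample, contrasting the two inequivalent spectral triples available for $SU_q(2)$: the one of \cite{pal} studied in Sections 4--6 and the one of \cite{Landi_1} discussed just above. The decisive structural observation I would stress at the outset is that both triples are built over the \emph{same} pre-$C^\ast$-algebra $\A = C^\infty(SU_q(2))$, and hence over the same underlying $C^\ast$-algebra $A = C(SU_q(2))$. Consequently Definition \ref{CS} produces, in each of the two cases, a functional on one and the same space $M_N(\Omega^1(C^\infty(SU_q(2))))$ of hermitian $1$-forms. If the action were a topological invariant in the sense made precise at the beginning of this section, these two functionals would have to agree on every $A$; thus it suffices to exhibit a single hermitian $1$-form on which they disagree, and I would do so already for $N = 1$ and a fixed nonzero level $k$.

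First I would treat the triple of \cite{Landi_1}. By the corollary just established its cochain $\phi_3$ vanishes identically, so Definition \ref{CS} collapses to
\begin{equation*}
S_{CS}(A) = -2\pi k\,\phi_1(A),
\end{equation*}
which is \emph{linear} in $A$ because $\phi_1$ is a $1$-cochain. In particular, for any fixed hermitian $1$-form $\tilde A$ the function $t \mapsto S_{CS}(t\tilde A)$, $t \in \RH$, is an affine-linear polynomial in $t$, of degree at most one.

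Next I would read off from Theorem \ref{chern_simons_q} that for the triple of \cite{pal} the action is genuinely nonlinear. By Definition \ref{CS} and the linearity of $\phi_3$ on $3$-forms, the rescaled action becomes the polynomial
\begin{equation*}
S_{CS}(t\tilde A) = 6\pi k\,t^2\,\phi_3(\tilde A\,d\tilde A) + 4\pi k\,t^3\,\phi_3(\tilde A^3) - 2\pi k\,t\,\phi_1(\tilde A)
\end{equation*}
in $t$, and Theorem \ref{chern_simons_q} exhibits the leading coefficient $\phi_3(\tilde A^3)$ through the cubic expression
\begin{equation*}
\frac{1}{18}\sum_{k_i \in \ZH} k_2 k_4 k_6\, \Im_{k_1 k_2}\Im_{k_3 k_4}\Im_{k_5 k_6}\,\d_{k_2-k_1+k_4-k_3+k_6-k_5},
\end{equation*}
a nonzero polynomial in the parameters $\Im_{kl}$. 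Choosing $\tilde A$ so that a single surviving summand is nonzero, the $t^3$-coefficient of $S_{CS}(t\tilde A)$ does not vanish, so the \cite{pal}-action restricted to the line $\RH\tilde A$ is a genuine cubic in $t$.

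The conclusion is then immediate: an affine-linear polynomial in $t$ and a cubic one with nonzero leading term cannot coincide, so for some $t_0 \in \RH$ the two actions take different values on $t_0\tilde A$, a form lying in the common domain $M_N(\Omega^1(C^\infty(SU_q(2))))$. Hence $S_{CS}$ is not determined by the $C^\ast$-algebra $A$ alone, but depends on the full spectral data $(\A,\H,\D)$ -- concretely through $\H$ and $\D$, which enter the residue formulas defining $\phi_3$ and $\phi_1$ -- and is therefore not a topological invariant. I expect the only point requiring genuine care to be the certification that the nonlinear $\phi_3$-contribution for the triple of \cite{pal} does not accidentally vanish for every admissible hermitian $A$; this is exactly what the explicit nonzero cubic term above is meant to secure, and it also explains why the level must be taken nonzero, since for $k = 0$ both actions are trivially zero.
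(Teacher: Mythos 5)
Your proposal is correct and follows essentially the same route as the paper: both arguments contrast the two spectral triples over the common $C^\ast$-algebra $C(SU_q(2))$, using the vanishing of $\phi_3$ for the triple of \cite{Landi_1} to conclude that its action is linear in $A$, while the action of Theorem \ref{chern_simons_q} for the triple of \cite{pal} is not. Your restriction to the line $t \mapsto t\tilde A$ and the comparison of polynomial degrees merely makes explicit the paper's remark that linearity ``is certainly not true'' for the other triple, and your flagged verification that $\phi_3(\tilde A^3)$ can be made nonzero (e.g.\ by supporting $\Im$ on a single diagonal index) is a sensible point of care that the paper itself leaves implicit.
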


\bigskip \bigskip

\end{document}